\def\bw{\mathbf w}
\def\bx{\mathbf x}
\def\bz{\mathbf z}
\def\bX{\mathbf X}
\def\bZ{\mathbf Z}
\def\ba{\mathbf a}
\def\bb{\mathbf b}
\def\bA{\mathbf A}
\def\bB{\mathbf B}
\def\bL{\mathbf L}
\def\balpha{\boldsymbol \alpha}
\def\bbeta{\boldsymbol \beta}
\newtheorem{example}{Example}
\newtheorem{thm}{Theorem}
\newtheorem{prop}[thm]{Proposition}
\newtheorem{lem}[thm]{Lemma}
\newtheorem{cor}[thm]{Corollary}
\newcommand{\abovebelow}[2]{\genfrac{}{}{0pt}{}{#1}{#2}}
\DeclareMathOperator*{\argmax}{argmax}
\DeclareMathOperator*{\dom}{dom}
\newenvironment{prooftitle}[1]{{\noindent \textsc{Proof #1}}\\}
\begin{document}

\title{Generalised additive and index models \\with shape constraints}

\author{Yining Chen and Richard J. Samworth\\ 
  Statistical Laboratory\\
  University of Cambridge\\
  \{y.chen, r.samworth\}@statslab.cam.ac.uk
}

\maketitle
\begin{abstract}
We study generalised additive models, with shape restrictions (e.g. monotonicity, convexity, concavity) imposed on each component of the additive prediction function.  We show that this framework facilitates a nonparametric estimator of each additive component, obtained by maximising the likelihood.  The procedure is free of tuning parameters and under mild conditions is proved to be uniformly consistent on compact intervals.  More generally, our methodology can be applied to generalised additive index models.  Here again, the procedure can be justified on theoretical grounds and, like the original algorithm, possesses highly competitive finite-sample performance.  Practical utility is illustrated through the use of these methods in the analysis of two real datasets.  Our algorithms are publicly available in the \texttt{R} package \texttt{scar}, short for \textbf{s}hape-\textbf{c}onstrained \textbf{a}dditive \textbf{r}egression.
\end{abstract}

%\vskip 10pt
\noindent{\bf Keywords}: Generalised additive models, Index models, Nonparametric maximum likelihood estimation, Shape constraints.
%\newpage

\section{Introduction}
\label{Sec:intro}
Generalised additive models (GAMs) \citep{HastieTibshirani1986, HastieTibshirani1990, Wood2006} have become an extremely popular tool for modelling multivariate data. They are designed to enjoy the flexibility of nonparametric modelling while avoiding the curse of dimensionality \citep{Stone1986}.  Mathematically, suppose that we observe pairs $(\bX_1,Y_1),\ldots, (\bX_n,Y_n)$, where $\bX_i = (X_{i1},\ldots,X_{id})^T \in \mathbb{R}^d$ is the predictor and $Y_i \in \mathbb{R}$ is the response, for $i = 1,\ldots,n$.  A generalised additive model relates the predictor and the mean response $\mu_i = \mathbb{E}(Y_i)$ through
\[
	g(\mu_i) = f(\bX_i) = \sum_{j=1}^d f_j(X_{ij}) + c,
\]
where $g$ is a specified link function, and where the response $Y_i$ follows an exponential family distribution.  Here $c\in\mathbb{R}$ is the intercept term and for every $j = 1,\ldots,d$, the additive component function $f_j:\mathbb{R} \rightarrow \mathbb{R}$ is assumed to satisfy the identifiability constraint $f_j(0) = 0$.  Our aim is to estimate the additive components $f_1,\ldots,f_d$ together with the intercept $c$ based on the given observations.  Standard estimators are based on penalised spline-based methods \citep[e.g.][]{Wood2004,Wood2008}, and involve tuning parameters whose selection is not always straightforward, especially if different additive components have different levels of smoothness, or if individual components have non-homogeneous smoothness.

In this paper, we propose a new approach, motivated by the fact that the additive components of $f$ often follow certain common shape constraints such as monotonicity or convexity.  The full list of constraints we consider is given in Table~\ref{Tab:shapeclass}, with each assigned a numerical label to aid our exposition.  By assuming that each of $f_1,\ldots,f_d$ satisfies one of these nine shape restrictions, we show in Section~\ref{Sec:gam} that it is possible to derive a nonparametric maximum likelihood estimator, which requires no choice of tuning parameters and which can be computed using fast convex optimisation techniques.  In Theorem~\ref{Thm:SCMLEconsistency}, we prove that under mild regularity conditions, it is uniformly consistent on compact intervals.

\begin{table}[!ht]
  \centering
  \begin{tabular}{ c c | c c | c c } \hline
  shape constraint & label & shape constraint & label & shape constraint & label \\\hline
  linear  & 1 & monotone increasing & 2 & monotone decreasing & 3 \\
  convex  & 4 & convex increasing   & 5 & convex decreasing   & 6 \\
  concave & 7 & concave increasing  & 8 & concave decreasing  & 9 \\\hline
  \end{tabular}
  \caption{Different shape constraints and their corresponding labels}
  \label{Tab:shapeclass}
\end{table}

More generally, as we describe in Section~\ref{Sec:gaim}, our approach can be applied to generalised additive index models (GAIMs), in which the predictor and the response are related through
\begin{align}
\label{Eq:gaim}
	g(\mu_i) = f^I(\bX_i) = f_1(\balpha_1^T \bX_i) + \ldots +  f_{m}(\balpha_{m}^T \bX_i) + c, 
\end{align}
where the value of $m \in \mathbb{N}$ is assumed known, where $g$ is a known link function, and where the response $Y_i$ again follows an exponential family distribution. Here, $\balpha_1,\ldots,\balpha_{m} \in \mathbb{R}^d$ are called the \emph{projection indices}, $f_1,\ldots,f_{m}:\mathbb{R} \rightarrow \mathbb{R}$ are called the \emph{ridge functions} (or sometimes, \emph{additive components}) of $f^I$, and $c \in \mathbb{R}$ is the intercept.  Such index models have also been widely applied, especially in the area of econometrics \citep{LiRacine2007}. When $g$ is the identity function, the model is also known as projection pursuit regression \citep{FriedmanStuetzle1981}; when $m=1$, the model reduces to the single index model \citep{Ichimura1993}. By imposing shape restrictions on each of $f_1,\ldots,f_m$, we extend our methodology and theory to this setting, allowing us to estimate simultaneously the projection indices, the ridge functions and the intercept.

The challenge of computing our estimators is taken up in Section~\ref{Sec:compute}, where our algorithms are described in detail.  In Section~\ref{Sec:sim}, we summarise the results of a thorough simulation study designed to compare the finite-sample properties of \texttt{scar} with several alternative procedures.  We conclude in Section~\ref{Sec:real} with two applications of our methodology to real datasets concerning doctoral publications in biochemistry and the decathlon.  The proofs of our main results can be found in the Appendix; various auxiliary results are given in the online supplementary material.

This paper contributes to the larger literature of regression in the presence of shape constraints.  In the univariate case, and with the identity link function, the properties of shape-constrained least squares procedures are well-understood, especially for the problem of isotonic regression. See, for instance, \citet{Brunk1958}, \citet{Brunk1970} and \citet{BBBB1972}. For the problem of univariate convex regression, see \citet{HansonPledger1976}, \citet{GJW2001}, \citet{GJW2008} and \citet{GuntuboyinaSen2013}. These references cover consistency, local and global rates of convergence,  and computational aspects of the estimator.  \citet{MammenYu2007} studied additive isotonic regression with the identity link function.  During the preparation of this manuscript, we became aware of the work of \citet{Meyer2013a}, who developed similar methodology (but not theory) to ours in the Gaussian, non-index setting.  The problem of GAMs with shape restrictions was also recently studied by \citet{PyaWood2014}, who proposed a penalised spline method that is compared with ours in Section~\ref{Sec:sim}.  Finally, we mention that recent work by \citet{KimSamworth2014} has shown that shape-restricted inference without further assumptions can lead to slow rates of convergence in higher dimensions. The additive or index structure therefore becomes particularly attractive in conjunction with shape constraints as an attempt to evade the curse of dimensionality.

\section{Generalised additive models with shape constraints}
\label{Sec:gam}

\subsection{Maximum likelihood estimation}

Recall that the density function of a natural exponential family (EF) distribution with respect to a reference measure (either Lebesgue measure on $\mathbb{R}$ or counting measure on $\mathbb{Z}$) can be written in the form
\[
	f_Y(y;\mu,\phi) = h(y,\phi) \, \exp \left\{\frac{yg(\mu)-B(g(\mu))}{\phi} \right\},
\]
where $\mu \in \mathcal{M} \subseteq \mathbb{R}$ and $\phi \in \Phi \subseteq (0,\infty)$ are the mean and dispersion parameters respectively.  To simplify our discussion, we restrict our attention to the most commonly-used natural EF distributions, namely, the Gaussian, Gamma, Poisson and Binomial families, and take $g$ to be the canonical link function.  Expressions for $g$ and the (strictly convex) log-partition function $B$ for the different exponential families can be found in Table~\ref{Tab:glmformula}.  The corresponding distributions are denoted by $\mathrm{EF}_{g,B}(\mu,\phi)$, and we write $\dom(B) = \{\eta \in \mathbb{R}:B(\eta) < \infty\}$ for the domain of $B$.  As a convention, for the Binomial family, the response is scaled to take values in $\{0,1/T,2/T,\ldots,1\}$ for some known $T \in \mathbb{N}$. 
\begin{table}[!ht]
  \centering
  \begin{tabular}{ c c c c c c c} \hline
  exponential family & $g(\mu)$ & $B(\eta)$ & dom($B$) & $\mathcal{M}$ & $\Phi$ \\\hline
  Gaussian  & $\mu$ & $\eta^2/2$  & $\mathbb{R}$ & $\mathbb{R}$ & $(0,\infty)$ \\
  Gamma  & $-\mu^{-1}$ & $-\log(-\eta)$ & $(-\infty, 0)$ & $(0,\infty)$ & $(0,\infty)$ \\
  Poisson  & $\log\mu$ & $e^\eta$ & $\mathbb{R}$ & $(0,\infty)$ & $\{1\}$ \\
  Binomial & $\log\frac{\mu}{1-\mu}$ & $\log(1+e^\eta)$ & $\mathbb{R}$ & $(0,1)$ & $\{1/T\}$ \\\hline
  \end{tabular}
  \caption{Exponential family distributions, their corresponding canonical link functions, log-partition functions and mean and dispersion parameter spaces.}
  \label{Tab:glmformula}
\end{table}

If $(\bX_1,Y_1), \ldots, (\bX_n,Y_n)$ are independent and identically distributed pairs taking values in $\mathbb{R}^d \times \mathbb{R}$, with $Y_i | \bX_i \sim \mathrm{EF}_{g,B}\bigl(g^{-1}(f(\bX_i)),\phi\bigr)$ for some \emph{prediction function} $f:\mathbb{R}^d\rightarrow \dom (B)$, then the (conditional) log-likelihood of $f$ can be written as 
\[
	\frac{1}{\phi} \sum_{i=1}^n \{Y_i f(\bX_i) - B(f(\bX_i))\} + \sum_{i=1}^{n} \log h(Y_i,\phi).
\]
Since we are only interested in estimating $f$, it suffices to consider the \emph{scaled partial log-likelihood} 
\[
	\bar{\ell}_{n,d}(f) \equiv \bar{\ell}_{n,d}\big(f;(\bX_1,Y_1),\ldots,(\bX_n,Y_n) \big) := \frac{1}{n} \sum_{i=1}^n \left\{ Y_i f(\bX_i) - B(f(\bX_i)) \right\} \equiv \frac{1}{n} \sum_{i=1}^n \ell_{i,d}(f),
\]
say. In the rest of this section, and in the proof of Proposition~\ref{Prop:gamexistunique} and Theorem~\ref{Thm:SCMLEconsistency} in the appendix, we suppress the dependence of $\bar{\ell}_{n,d}(\cdot)$ and $\ell_{i,d}(\cdot)$ on $d$ in our notation.

Let $\bar{\mathbb{R}} = \mathbb{R} \cup \{-\infty,\infty\}$ denote the extended real line.  In order to guarantee the existence of our estimator, it turns out to be convenient to extend the definition of each $\ell_i$ (and therefore $\bar{\ell}_n$) to all $f: \mathbb{R}^d \rightarrow \bar{\mathbb{R}}$, which we do as follows:
\begin{enumerate}[1.]
\setlength{\itemsep}{0pt}
\setlength{\parskip}{0pt}
\setlength{\parsep}{0pt}
\item For the Gamma family, if $f(\bX_i) \ge 0$, then we take $\ell_{i}(f) = -\infty$.  This is because the log-partition function of the Gamma family has domain $(-\infty,0)$, so $f$ must be negative at $\bX_i$ in order for $\ell_i(f)$ to be well-defined.
\item If $f(\bX_i) = -\infty$, then we set $\ell_{i}(f) = \lim_{a \rightarrow -\infty} Y_i a - B(a)$. Similarly, if $f(\bX_i) = \infty$ (in the Gaussian, Poisson or Binomial setting), then we define $\ell_{i}(f) = \lim_{a \rightarrow \infty} Y_i a - B(a)$.  Note that both limits always exist in $\bar{\mathbb{R}}$.
\end{enumerate}
For any $\bL_d = (l_1,\ldots,l_d)^T \in \{1,2,\ldots,9\}^d$, let $\mathcal{F}^{\bL_d}$ denote the set of functions $f:\mathbb{R}^d \rightarrow \mathbb{R}$ of the form
\[
f(\bx) = \sum_{j=1}^d f_j(x_j) + c
\]
for $\bx = (x_1,\ldots,x_d)^T \in \mathbb{R}^d$, where for every $j = 1,\ldots,d$, $f_j:\mathbb{R} \rightarrow \mathbb{R}$ is a function obeying the shape restriction indicated by label $l_j$ and satisfying $f_j(0) = 0$, and where $c \in \mathbb{R}$.  Whenever $f$ has such a representation, we write $f \stackrel{\mathcal{F}^{\bL_d}}{\sim} (f_1,\ldots,f_d,c)$, and call $\bL_d$ the \emph{shape vector}.
The pointwise closure of $\mathcal{F}^{\bL_d}$ is defined as 
\[
\mathrm{cl}(\mathcal{F}^{\bL_d}) = \Big\{ f: \mathbb{R}^d \rightarrow \bar{\mathbb{R}} \Big| \exists f^1, f^2, \ldots \in \mathcal{F}^{\bL_d} \mbox{ s.t. } \lim_{k\rightarrow\infty}f^k(\bx)=f(\bx) \mbox{ for every } \bx \in \mathbb{R}^d \Big \}.
\]

For a specified shape vector $\bL_d$, we define the shape-constrained maximum likelihood estimator (SCMLE) as 
\[
	\hat{f}_n \in \argmax_{f \in \mathrm{cl}(\mathcal{F}^{\bL_d})} \, \bar{\ell}_{n}(f).
\]
Like other shape restricted regression estimators, $\hat{f}_n$ is not unique in general. However, as can be seen from the following proposition, the value of $\hat{f}_n$ is uniquely determined at $\bX_1,\ldots,\bX_n$.

\begin{prop}
\label{Prop:gamexistunique}
The set $\hat{S}_n^{\bL_d} = \argmax_{f \in \mathrm{cl}(\mathcal{F}^{\bL_d})} \bar{\ell}_n(f)$ is non-empty. Moreover, all elements of $\hat{S}_n^{\bL_d}$ agree at $\bX_1, \ldots, \bX_n$.
\end{prop}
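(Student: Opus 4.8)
The plan is to reduce this infinite-dimensional problem to a finite-dimensional one by passing to the vector of fitted values. Since $\bar{\ell}_n(f) = \frac{1}{n}\sum_{i=1}^n \ell_i(f)$ depends on $f$ only through $\mathbf{v}(f) := \bigl(f(\bX_1),\dots,f(\bX_n)\bigr)$, it is equivalent to maximise $L(\mathbf{v}) := \frac{1}{n}\sum_{i=1}^n\{Y_i v_i - B(v_i)\}$, with each summand extended to $\bar{\mathbb{R}}$ exactly as in the two conventions above, over the feasible set $\mathcal{V} := \{\mathbf{v}(f) : f \in \mathrm{cl}(\mathcal{F}^{\bL_d})\} \subseteq \bar{\mathbb{R}}^n$. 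Each map $v \mapsto Y_i v - B(v)$ is concave and upper semi-continuous on $\bar{\mathbb{R}}$, is strictly concave on the interior of its effective domain (because $B$ is strictly convex there), and is bounded above by $\sup_a\{Y_i a - B(a)\} < \infty$ in each of the four families; hence $L$ is concave, upper semi-continuous and bounded above on $\bar{\mathbb{R}}^n$. Since $\mathcal{F}^{\bL_d}$ contains the constant functions, at least one of which has finite likelihood (take $f \equiv 0$, or $f \equiv -1$ in the Gamma case), the quantity $M := \sup_{\mathbf{v}\in\mathcal{V}} L(\mathbf{v})$ is finite.

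The crucial step is to show that $\mathcal{V}$ is compact and convex. Let $P := \{\mathbf{v}(f) : f \in \mathcal{F}^{\bL_d}\}$. Restricting a univariate function obeying label $l_j$ and satisfying $f_j(0) = 0$ to the finite set $\tilde{D}_j := \{0\}\cup\{X_{1j},\dots,X_{nj}\}$ turns the shape restriction together with $f_j(0) = 0$ into finitely many homogeneous linear equalities and inequalities, so the attainable restrictions form a polyhedral cone $C_j$; conversely every vector in $C_j$ is realised by a function of the correct shape (for instance a piecewise-linear-and-flat interpolant). As $P$ is the image of the polyhedral cone $C_1\times\dots\times C_d\times\mathbb{R}$ under the linear map that sends a decomposition to its vector of fitted values, $P$ is itself a polyhedral cone in $\mathbb{R}^n$, and in particular closed. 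One then checks that $\mathcal{V}$ equals the closure $\overline{P}$ of $P$ in the compact space $\bar{\mathbb{R}}^n$: the inclusion $\mathcal{V}\subseteq\overline{P}$ is immediate, and for the reverse one writes a point of $\overline{P}$ as $\lim_{t\to\infty}(\mathbf{p}+t\mathbf{r})$ with $\mathbf{p}\in P$ and $\mathbf{r}$ in the recession cone of $P$ --- which, $P$ being a cone, is $P$ itself --- realises $\mathbf{p} = \mathbf{v}(f^{\mathbf{p}})$ and $\mathbf{r} = \mathbf{v}(f^{\mathbf{r}})$ with $f^{\mathbf{p}}, f^{\mathbf{r}}\in\mathcal{F}^{\bL_d}$, and verifies that $\bx\mapsto\lim_{t\to\infty}\{f^{\mathbf{p}}(\bx)+tf^{\mathbf{r}}(\bx)\}$ is a well-defined element of $\bar{\mathbb{R}}$ for every $\bx$, hence a pointwise limit of elements of $\mathcal{F}^{\bL_d}$ lying in $\mathrm{cl}(\mathcal{F}^{\bL_d})$ and having the prescribed fitted-value vector. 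Thus $\mathcal{V}=\overline{P}$ is compact and convex, and since $L$ is upper semi-continuous it attains its supremum $M$ on $\mathcal{V}$ at some $\mathbf{v}^{\star}$; any $f\in\mathrm{cl}(\mathcal{F}^{\bL_d})$ with $\mathbf{v}(f)=\mathbf{v}^{\star}$ lies in $\hat{S}_n^{\bL_d}$, which is therefore non-empty.

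For the agreement at $\bX_1,\dots,\bX_n$, suppose $f,g\in\hat{S}_n^{\bL_d}$, so $L(\mathbf{v}(f))=L(\mathbf{v}(g))=M$ is finite. Finiteness forces every $\ell_i(f)$ and $\ell_i(g)$ to be finite, which confines the coordinates of $\mathbf{v}(f)$ and $\mathbf{v}(g)$ to $\mathbb{R}$ apart from the degenerate Poisson and Binomial boundary cases, and even there $f(\bX_i)$ and $g(\bX_i)$ cannot be oppositely infinite, as this would require incompatible values of $Y_i$. Hence $\bar{\mathbf{v}} := \tfrac{1}{2}\{\mathbf{v}(f)+\mathbf{v}(g)\}$ is well-defined in $\bar{\mathbb{R}}^n$ and, by convexity of $\mathcal{V}$, belongs to $\mathcal{V}$; pick $h\in\mathrm{cl}(\mathcal{F}^{\bL_d})$ with $\mathbf{v}(h)=\bar{\mathbf{v}}$. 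Concavity of $v\mapsto Y_iv-B(v)$ gives $\ell_i(h)\ge\tfrac{1}{2}\{\ell_i(f)+\ell_i(g)\}$, with strict inequality whenever $f(\bX_i)\ne g(\bX_i)$ (using strict concavity on the real line, and a comparison with the supremal value in the boundary cases). Summing over $i$, if $\mathbf{v}(f)\ne\mathbf{v}(g)$ then $\bar{\ell}_n(h)>M$, contradicting the definition of $M$; therefore $f$ and $g$ agree at $\bX_1,\dots,\bX_n$.

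I expect the main obstacle to be the geometric step identifying $\mathcal{V}$ with $\overline{P}$ and establishing compactness: one must (i) verify that each of the nine shape restrictions, combined with the identifiability constraint $f_j(0)=0$, really cuts out a polyhedral cone once restricted to finitely many abscissae, and that each element of that cone is attained by a genuine shape-constrained function; (ii) invoke the fact that linear images of polyhedral cones are closed, so that failure to attain the supremum can only happen ``at infinity'' (note that the additive decomposition of $f$ is typically non-identifiable, so the map $f\mapsto\mathbf{v}(f)$ is far from injective and this requires care); and (iii) carry out the recession-direction construction realising the boundary-at-infinity points of $P$ by honest members of $\mathrm{cl}(\mathcal{F}^{\bL_d})$, while tracking the conventions governing the Gamma domain and the values $\pm\infty$ in the Poisson and Binomial settings --- it is precisely to make $\mathcal{V}$ compact that one works with the pointwise closure $\mathrm{cl}(\mathcal{F}^{\bL_d})$ rather than with $\mathcal{F}^{\bL_d}$. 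The remaining steps are routine applications of upper semi-continuity, compactness and strict concavity.
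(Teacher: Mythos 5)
Your skeleton coincides with the paper's: you pass to the vector of fitted values in $\bar{\mathbb{R}}^n$, get existence from compactness of that set plus (upper semi-)continuity of the extended likelihood, and get agreement at the design points from convexity of the fitted-value set together with strict concavity of $v \mapsto Y_i v - B(v)$, treating the infinite coordinates exactly as the paper does (only Binomial $Y_i \in \{0,1\}$ and Poisson $Y_i = 0$ permit them, and two maximisers cannot be oppositely infinite). Where you genuinely differ is the key closedness/compactness step. The paper's Lemma~\ref{Lem:Closed} takes an arbitrary convergent sequence of fitted-value vectors, replaces the functions by piecewise-affine interpolants on the grid generated by the design points, and extracts a pointwise-convergent subsequence in $\bar{\mathbb{R}}$ by an induction over cells. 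You instead note that the finite part $P$ of the fitted-value set is a polyhedral cone (a linear image of the per-coordinate cones cut out by the shape constraints at finitely many abscissae) and identify $\mathcal{V}$ with its closure in $\bar{\mathbb{R}}^n$ via recession directions, each ray being realised by the genuine family $f^{\mathbf{p}} + t f^{\mathbf{r}} \in \mathcal{F}^{\bL_d}$. This is a clean, more structural alternative, and it also delivers the convexity of $\mathcal{V}$ (which the paper asserts for $\Theta$ without comment) essentially for free.

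The one step you must actually prove, and currently only assert, is the representation of the points at infinity: every $\boldsymbol{\eta} \in \overline{P}$ with some infinite coordinates is of the form $\lim_{t \to \infty}(\mathbf{p} + t\mathbf{r})$ with $\mathbf{p} \in P$, $\mathbf{r} \in \mathrm{rec}(P) = P$, $p_i = \eta_i$ and $r_i = 0$ at the finite coordinates, and $r_i$ of the correct sign at the infinite ones. This is precisely where polyhedrality is essential and cannot be waved through: for a general closed convex cone the statement is false --- for $C = \{(x,y,z): x \ge 0,\, z \ge 0,\, xz \ge y^2\}$ the points $(k^2,k,1)$ converge coordinatewise to $(\infty,\infty,1)$, yet every $r \in C$ with $r_3 = 0$ has $r_2 = 0$, so no such ray exists. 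For polyhedral $P$ the claim is true and can be proved, for instance, by Minkowski--Weyl (absorb the bounded vertex part, reducing to a sequence in the recession cone whose finite coordinates you match using closedness of the coordinate projection) followed by an LP-duality/Farkas argument producing a single direction $\mathbf{r}$ vanishing on the finite coordinates and strictly signed on the infinite ones: if none existed, the dual certificate $u$ would satisfy $u^T x \le 0$ on $P$ while $u^T z^k \to +\infty$ along your sequence. With that lemma written out (together with the routine verification that each of the nine shape classes, restricted to finitely many abscissae with $f_j(0)=0$, is a polyhedral cone every element of which is interpolable), your argument is complete and correct.
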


\noindent\textbf{Remarks}: 
\begin{enumerate}[1.]
\setlength{\itemsep}{0pt}
\setlength{\parskip}{0pt}
\setlength{\parsep}{0pt}
\item As can be seen from the proof of Proposition~\ref{Prop:gamexistunique}, if the EF distribution is Gaussian or Gamma, then $\hat{S}_n^{\bL_d} \cap \mathcal{F}^{\bL_d} \neq \emptyset$.
\item Under the Poisson setting, if $Y_i = 0$, then it might happen that $\hat{f}_n(\bX_i) = -\infty$. Likewise, for the Binomial GAM, if $Y_i = 0 \mbox{ or } 1$, then it is possible to have $\hat{f}_n(\bX_i) = -\infty \mbox{ or } \infty$, respectively. This is why we maximise over the closure of $\mathcal{F}^{\bL_d}$ in our definition of SCMLE.
\end{enumerate}

%In practice, any maximiser $\hat{f}_n$ in the set $\hat{S}_n^{\bL_d}$ can be used. In our implementation, we enforced every additive component of $\hat{f}_n$ to be piecewise linear. See Section~\ref{Sec:computeSCMLE} for more details.

\subsection{Consistency of the SCMLE}
In this subsection, we show the consistency of $\hat{f}_n$ in a random design setting. We will impose the following assumptions:
\begin{enumerate}
\setlength{\itemsep}{0pt}
\setlength{\parskip}{0pt}
\setlength{\parsep}{0pt}
\item[\textbf{(A.1)}] $(\bX,Y),(\bX_1,Y_1), (\bX_2,Y_2),\ldots$ is a sequence of independent and identically distributed pairs taking values in $\mathbb{R}^d \times \mathbb{R}$.  
\item[\textbf{(A.2)}] The random vector $\bX$ has a Lebesgue density with support $\mathbb{R}^d$.  
\item[\textbf{(A.3)}] Fix $\bL_d \in \{1,2,\ldots,9\}^d$.  Suppose that $Y|\bX \sim \mathrm{EF}_{g,B}\bigl(g^{-1}(f_0(\bX)),\phi_0\bigr)$, where $f_0 \in \mathcal{F}^{\bL_d}$ and $\phi_0 \in (0,\infty)$ denote the true prediction function and dispersion parameter respectively. 
\item[\textbf{(A.4)}] $f_0$ is continuous on $\mathbb{R}^d$.
\end{enumerate}

We are now in the position to state our main consistency result:
\begin{thm}
\label{Thm:SCMLEconsistency}
Assume \textbf{(A.1)} -- \textbf{(A.4)}. Then, for every $a_0 \ge 0$,
\[
	\sup_{\hat{f}_n \in \hat{S}_n^{\bL_d}} \sup_{\bx \in [-a_0, a_0]^d} |\hat{f}_n(\bx) - f_0(\bx)| \stackrel{a.s.}{\rightarrow} 0
\]
as $n \rightarrow \infty$.
\end{thm}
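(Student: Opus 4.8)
The plan is to exploit the concavity of the log-likelihood together with compactness arguments for shape-constrained classes. First I would reduce the supremum over $\hat S_n^{\bL_d}$ to a single estimator: since, by Proposition~\ref{Prop:gamexistunique}, all maximisers agree at $\bX_1,\dots,\bX_n$, and since a shape-constrained additive function is determined on $[-a_0,a_0]^d$ by monotonicity/convexity interpolation once we control it on a slightly larger grid of design points (which by \textbf{(A.2)} become dense), it suffices to prove the convergence for one measurable selection $\hat f_n$. Next I would establish a \emph{stochastic equicontinuity/boundedness} step: show that, almost surely, the sequence of component functions $(\hat f_{n,1},\dots,\hat f_{n,d},\hat c_n)$ is eventually uniformly bounded on compact intervals. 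This is the natural place to use that $f_0$ lies in the interior of $\dom(B)^{\mathbb{R}^d}$ on compacta and that $B$ is strictly convex and steep: if some component escaped to $\pm\infty$ on a compact set, then because $\bX$ has full-dimensional support the design would, with probability one, eventually place points where $\hat f_n$ takes values on which $y\,a - B(a)$ is severely penalised, contradicting $\bar\ell_n(\hat f_n)\ge\bar\ell_n(f_0)$, whose right-hand side converges a.s.\ to a finite limit by the SLLN.

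Given eventual uniform boundedness, the monotonicity and convexity constraints force uniform boundedness of the derivatives (or difference quotients) on any strictly interior subinterval, so by a Helly-type selection theorem every subsequence of $(\hat f_{n,1},\dots,\hat f_{n,d})$ has a further subsequence converging pointwise on $\mathbb{Q}^d$, hence (by the shape constraints) locally uniformly, to some limit $(\tilde f_1,\dots,\tilde f_d)$ with $\tilde f_j(0)=0$ obeying the same shape restriction $l_j$, and $\hat c_n$ converges along that subsequence to some $\tilde c$; write $\tilde f(\bx)=\sum_j\tilde f_j(x_j)+\tilde c$. I would then identify $\tilde f = f_0$ via a likelihood comparison: by a uniform SLLN (or a bracketing/Glivenko--Cantelli argument over the shape-constrained class, which has manageable entropy) one has $\bar\ell_n(f)\to L(f):=\mathbb{E}\{Y f(\bX) - B(f(\bX))\}$ uniformly over the relevant bounded shape-constrained class, while $\bar\ell_n(\hat f_n)\ge\bar\ell_n(f_0)$ and upper semicontinuity of $f\mapsto\bar\ell_n(f)$ under pointwise limits gives $L(\tilde f)\ge\limsup_n\bar\ell_n(\hat f_n)\ge\lim_n\bar\ell_n(f_0)=L(f_0)$. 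Since $L$ is maximised uniquely (up to a.e.-equivalence) at $f_0$ — because, conditionally on $\bX$, $a\mapsto g^{-1}(f_0(\bX))\,a - B(a)$ is strictly concave with maximiser $f_0(\bX)$, and $\bX$ has full support so the maximiser is unique among continuous additive functions — we get $\tilde f=f_0$ on $\mathbb{R}^d$, in particular on $[-a_0,a_0]^d$. A standard subsequence argument then upgrades this to convergence of the whole sequence.

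The main obstacle I anticipate is the first real step: ruling out the possibility that the estimator's components blow up on compact intervals (equivalently, that $\hat c_n$ or the $\hat f_{n,j}$ drift to the boundary of $\dom(B)$). The difficulty is that the additive structure couples the components — a large positive excursion of one component can be compensated by a large negative excursion of another at the observed design points — so one cannot bound each component in isolation. I would handle this by working with the \emph{prediction function} $\hat f_n$ directly rather than its components: first show $\bar\ell_n(\hat f_n)$ is bounded below (it dominates $\bar\ell_n(f_0)$), use the steepness of $B$ to conclude $\hat f_n(\bX_i)$ cannot be too extreme on a non-negligible fraction of design points, then transfer this to a genuine $L^\infty$ bound on compacta by invoking the shape constraints and the full-support design (a point in $[-a_0,a_0]^d$ is sandwiched, coordinate by coordinate, between nearby design points, and monotonicity/convexity propagates the bound), and only afterwards separate the controlled prediction function into its components using the identifiability normalisation $f_j(0)=0$.
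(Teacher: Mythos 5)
Your overall architecture is essentially that of the paper: a likelihood lower bound via the SLLN, a uniform boundedness step on compacta obtained by penalising extreme values of the \emph{prediction function} (not the individual components) at design points, a Glivenko--Cantelli/bracketing uniform law over the bounded shape-constrained class restricted to a compactum, and identification of $f_0$ as the unique population maximiser (the paper's Lemma~\ref{Lem:positive}); the only structural difference is that you close the argument with a Helly-selection subsequence step, whereas the paper avoids selections altogether by proving the set inclusion $\hat{S}_n^{\bL_d}\subseteq \mathrm{cl}(B_\epsilon(f_0))$ directly from a well-separation constant $\zeta(a^*)$ plus the Glivenko--Cantelli property, which also handles the non-uniqueness (the supremum over $\hat{S}_n^{\bL_d}$) and the $\pm\infty$ values in $\mathrm{cl}(\mathcal{F}^{\bL_d})$ more cleanly.

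The genuine gap is in the transfer device you name for the boundedness step. Coordinate-wise sandwiching between nearby design points does not deliver a two-sided $L^\infty$ bound once convex or concave components are present: a convex component can dip arbitrarily far below its values at neighbouring design points (convexity only propagates the \emph{upper} bound inward, concavity only the lower one), so being bounded at a dense set of design points does not bound $f$ at intermediate points by interpolation. What saves the argument — and what the paper isolates as Lemma~\ref{Lem:AddFuncBound} — is a quantitative statement of a different kind: if any component (or the intercept) is large at $\pm a$, then $|f|$ exceeds a fixed fraction of that size on one of \emph{finitely many fixed} full-dimensional regions of Lebesgue measure bounded below (a deep convex dip necessarily has width bounded below, so it cannot hide between design points). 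The fixedness of the regions is essential for a second reason you gloss over: the region where your data-dependent $\hat f_n$ is extreme depends on the data, so a pointwise SLLN argument ("the design will eventually place points there") is not legitimate; one needs either a fixed finite collection of regions (the paper's route) or a uniform-over-sets empirical process bound. Finally, "steepness of $B$" is not available in the Binomial (or one-sided in the Poisson) case — $y\eta-B(\eta)$ is \emph{not} penalised as $\eta\to+\infty$ when $y=1$, nor as $\eta\to-\infty$ when $y=0$ — so the penalisation must be formulated jointly in $(\bX,Y)$, using that informative response values occur with probability bounded away from zero on each fixed region, exactly as in the paper's Step~2 with the constants $\mathbb{P}(\bX\in C_k, Y=t)$. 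With Lemma~\ref{Lem:AddFuncBound} (or an equivalent) supplied and these points repaired, the rest of your plan goes through, modulo the standard caveats that subsequential limits of monotone components need not be continuous (uniformity on compacta is recovered only after identifying the limit with the continuous $f_0$) and that the contribution of design points outside the large compactum must be controlled by normalising the likelihood so that each summand is non-positive.
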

\noindent\textbf{Remarks}: 
\begin{enumerate}[1.]
\setlength{\itemsep}{0pt}
\setlength{\parskip}{0pt}
\setlength{\parsep}{0pt}
\item When the EF distribution is Gaussian, SCMLE coincides with the shape-constrained least squares estimator (SCLSE).  Using essentially the same argument, one can prove the the same consistency result for the SCLSE under a slightly different setting where $Y_i = f_0(\bX_i) + \epsilon_i$ for $i=1,\ldots,n$, and where $\epsilon_1,\ldots,\epsilon_n$ are independent and identically distributed with zero mean and finite variance, but are not necessarily Gaussian.
\item Assumption \textbf{(A.2)} can be weakened at the expense of lengthening the proof still further.  For instance, one can assume only that the support $\mathrm{supp}(\bX)$ of the covariates to be a convex subset of $\mathbb{R}^d$ with positive Lebesgue measure. In that case, it can be concluded that $\hat{f}_n$ converges uniformly to $f_0$ almost surely on any compact subset contained in the interior of $\mathrm{supp}(\bX)$.  In fact, with some minor modifications, our proof can also be generalised to situations where some components of $\bX$ are discrete.  
\item Even without Assumption \textbf{(A.4)}, consistency under a weaker norm can be established, namely
\[
	\sup_{\hat{f}_n \in \hat{S}_n^{\bL_d}}  \; \int_{[-a_0, a_0]^d} |\hat{f}_n(\bx) - f_0(\bx)|\, d\bx \stackrel{a.s.}{\rightarrow} 0, \quad\mbox{ as } n \rightarrow \infty.
\]
\item Instead of assuming a single dispersion parameter $\phi_0$ as done here, one can take $\phi_{ni} = \phi_0 / w_{ni}$ for $i =1,\ldots,n$, where $w_{ni}$ are known, positive weights (this is frequently needed in practice in the Binomial setting).  In that case, the new partial log-likelihood can be viewed as a weighted version of the original one. Consistency of SCMLE can be established provided that $\liminf_{n \rightarrow \infty} \frac{\min_i w_{ni}}{\max_i w_{ni}} > 0$. 
\end{enumerate}
Under assumption \textbf{(A.3)}, we may write $f_0 \stackrel{\mathcal{F}^{\bL_d}}{\sim} (f_{0,1},\ldots,f_{0,d},c_0)$.  From the proof of Theorem~\ref{Thm:SCMLEconsistency}, we see that for any $a_0 > 0$, with probability one, for sufficiently large $n$, any $\hat{f}_n \in \hat{S}_n^{\bL_d}$ can be written in the form $\hat{f}_n(\bx) = \sum_{j=1}^d \hat{f}_{n,j}(x_j) + \hat{c}_n$ for $\bx \in [-a_0,a_0]^d$, where $\hat{f}_{n,j}$ satisfies the shape constraint $l_j$ and $\hat{f}_{n,j}(0) = 0$ for each $j=1,\ldots,d$.  
The following corollary establishes the important fact that each additive component (as well as the intercept term) is estimated consistently by SCMLE.
\begin{cor}
\label{Cor:SCMLEconsistency}
Assume \textbf{(A.1)} -- \textbf{(A.4)}. Then, for any $a_0 \ge 0$,
\[
	  \sup_{\hat{f}_n \in \hat{S}_n^{\bL_d}} \bigg\{ \sum_{j=1}^d \sup_{x_j \in [-a_0, a_0]}|\hat{f}_{n,j}(x_j) - f_{0,j}(x_j)| + |\hat{c}_n-c_0| \bigg\} \stackrel{a.s.}{\rightarrow} 0
\]
as $n \rightarrow \infty$.
\end{cor}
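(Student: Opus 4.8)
The plan is to derive the corollary directly from Theorem~\ref{Thm:SCMLEconsistency}, using the identifiability normalisation $f_j(0) = 0$ to peel off the individual components one at a time. First I would note that we may assume $a_0 > 0$, since the left-hand side of the claimed display is nondecreasing in $a_0$, so the case $a_0 = 0$ follows from any case $a_0 > 0$. By the discussion immediately preceding the corollary — which is justified within the proof of Theorem~\ref{Thm:SCMLEconsistency} — there is an event $\Omega_0$ of probability one on which the conclusion of Theorem~\ref{Thm:SCMLEconsistency} holds and, moreover, for all sufficiently large $n$, every $\hat{f}_n \in \hat{S}_n^{\bL_d}$ can be written as $\hat{f}_n(\bx) = \sum_{j=1}^d \hat{f}_{n,j}(x_j) + \hat{c}_n$ for all $\bx \in [-a_0,a_0]^d$, with $\hat{f}_{n,j}$ obeying shape constraint $l_j$ and $\hat{f}_{n,j}(0) = 0$. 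Such a decomposition is unique on the cube: setting all coordinates but the $j$th to $0$ forces the value $\hat{f}_{n,j}(x_j) + \hat{c}_n$, and then $x_j = 0$ forces $\hat{c}_n$. I would work on $\Omega_0$ henceforth and set $\Delta_n := \sup_{\hat f_n \in \hat S_n^{\bL_d}} \sup_{\bx \in [-a_0,a_0]^d} |\hat f_n(\bx) - f_0(\bx)|$, so that $\Delta_n \to 0$ by the theorem.

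Writing $f_0 \stackrel{\mathcal{F}^{\bL_d}}{\sim} (f_{0,1},\ldots,f_{0,d},c_0)$ and evaluating both functions at $\bx = \mathbf{0} \in [-a_0,a_0]^d$ gives $\hat{c}_n = \hat{f}_n(\mathbf{0})$ and $c_0 = f_0(\mathbf{0})$, whence $|\hat{c}_n - c_0| \le \Delta_n$. Next, for each $j \in \{1,\ldots,d\}$ and each $t \in [-a_0,a_0]$, let $t\mathbf{e}_j \in [-a_0,a_0]^d$ denote the vector all of whose coordinates vanish except the $j$th, which equals $t$. The two additive representations collapse to $\hat{f}_n(t\mathbf{e}_j) = \hat{f}_{n,j}(t) + \hat{c}_n$ and $f_0(t\mathbf{e}_j) = f_{0,j}(t) + c_0$, so
\[
|\hat{f}_{n,j}(t) - f_{0,j}(t)| \;\le\; |\hat{f}_n(t\mathbf{e}_j) - f_0(t\mathbf{e}_j)| + |\hat{c}_n - c_0| \;\le\; 2\Delta_n .
\]
Taking the supremum over $t \in [-a_0,a_0]$, summing over $j = 1,\ldots,d$, and adding the intercept bound yields
\[
\sum_{j=1}^d \sup_{x_j \in [-a_0,a_0]} |\hat{f}_{n,j}(x_j) - f_{0,j}(x_j)| + |\hat{c}_n - c_0| \;\le\; (2d+1)\,\Delta_n .
\]
Since the right-hand side does not depend on the choice of $\hat{f}_n \in \hat{S}_n^{\bL_d}$, I would then take the supremum over $\hat{f}_n$ on the left and let $n \to \infty$, invoking Theorem~\ref{Thm:SCMLEconsistency} to conclude.

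The argument above is short precisely because the substantive work resides in Theorem~\ref{Thm:SCMLEconsistency}; the only delicate ingredient is the passage, for large $n$, from the abstract maximiser $\hat{f}_n \in \mathrm{cl}(\mathcal{F}^{\bL_d})$ — which a priori may take the values $\pm\infty$ and need not be additive — to a genuine finite additive decomposition with the prescribed shapes on the cube $[-a_0,a_0]^d$. That fact is supplied by the proof of the theorem, and once it is in hand the displayed chain of inequalities, together with uniqueness of the decomposition and the observation that every bound is expressed through $\Delta_n$ (which already incorporates the supremum over $\hat{S}_n^{\bL_d}$), completes the proof with no further effort.
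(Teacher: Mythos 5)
Your proposal is correct and follows essentially the same route as the paper: the intercept is handled by evaluating at the origin, and each component by evaluating along the coordinate axis $t\mathbf{e}_j$ (the paper's sets $I_j$) together with the triangle inequality, all controlled by the uniform bound from Theorem~\ref{Thm:SCMLEconsistency}. Your additional remarks on the existence and uniqueness of the additive decomposition for large $n$ simply make explicit what the paper states in the discussion preceding the corollary.
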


\section{Generalised additive index models with shape constraints}
\label{Sec:gaim}
\subsection{The generalised additive index model and its identifiability}
Recall that in the generalised additive index model, the response $Y_i \in \mathbb{R}$ and the predictor $\bX_i = (X_{i1},\ldots,X_{id})^T \in \mathbb{R}^d$ are related through (\ref{Eq:gaim}), where $g$ is a known link function, and where conditional on $\bX_i$, the response $Y_i$ has a known EF distribution with mean parameter $g^{-1}(f(\bX_i))$ and dispersion parameter $\phi$.

Let $\bA=(\balpha_1,\ldots,\balpha_m)$ denote the $d \times m$ \emph{index matrix}, where $m \leq d$, and let $f(\bz) = \sum_{j=1}^m f_j(z_j)+c$ for $\bz = (z_1,\ldots,z_m)^T \in \mathbb{R}^m$, so the prediction function can be written as $f^I(\bx) = f(\bA^T \bx)$ for $\bx = (x_1,\ldots,x_d)^T \in \mathbb{R}^d$.   As in Section~\ref{Sec:gam}, we impose shape constraints on the ridge functions by assuming that $f_j: \mathbb{R} \rightarrow \mathbb{R}$ satisfies the shape constraint with label $l_j \in \{1,2,\ldots,9\}$, for $j=1,\ldots,m$.

To ensure the identifiability of the model, we only consider additive index functions $f^I$ of the form (\ref{Eq:gaim}) satisfying the following conditions, adapted from \citet{Yuan2011}:
\begin{enumerate}
\setlength{\itemsep}{0pt}
\setlength{\parskip}{0pt}
\setlength{\parsep}{0pt}
\item[{\textbf{(B.1a)}}] $f_j(0) = 0$ for $j = 1, \ldots,m$.
\item[{\textbf{(B.1b)}}] $\|\balpha_j\|_1 = 1$ for $j = 1, \ldots, m$, where $\|\cdot\|_1$ denotes the $\ell_1$ norm.
\item[{\textbf{(B.1c)}}] The first non-zero entry of $\balpha_j$ is positive for every $j$ with $l_j \in \{1,4,7\}$.
\item[{\textbf{(B.1d)}}] There is at most one linear ridge function in $f_1,\ldots,f_m$; if $f_k$ is linear, then $\balpha_{j}^T \balpha_{k} = 0$ for every $j \neq k$.
\item[{\textbf{(B.1e)}}] There is at most one quadratic ridge function in $f_1,\ldots,f_m$.
\item[{\textbf{(B.1f)}}] $\bA$ has full column rank $m$. 
\end{enumerate}

%\noindent\textbf{Remarks}: 
%\begin{enumerate}[1.]
%\setlength{\itemsep}{0pt}
%\setlength{\parskip}{0pt}
%\setlength{\parsep}{0pt}
%\item By assuming \textbf{(B.1d)}, we allow at most one $j \in \{1,\ldots,m\}$ such that $l_j = 1$.
%\item Assumption \textbf{(B.1f)} implies that $m \le d$. Intuitively, this means that the response can be explained by an additive function of a new predictor with dimension at most $d$. This is often regarded as a natural assumption for dimension reduction. 
%\end{enumerate}

\subsection{GAIM  estimation}
\label{Sec:gaimMLE}
Let $\bA_0 = (\balpha_{0,1},\ldots,\balpha_{0,m})$ denote the true index matrix.  For $\bx = (x_1,\ldots,x_d)^T \in \mathbb{R}^d$, let 
\[
f_0^I(\bx) = f_{0,1}(\balpha_{0,1}^T \bx) + \ldots +  f_{0,m}(\balpha_{0,m}^T \bx) + c_0 
\] 
be the true prediction function, and write $f_0(\bz) = \sum_{j=1}^m f_{0,j}(z_j)+c_0$ for $\bz = (z_1,\ldots,z_m)^T \in \mathbb{R}^m$.  Again we restrict our attention to the common EF distributions listed in Table~\ref{Tab:glmformula} and take $g$ to be the corresponding canonical link function.  Let 
\begin{align*}
	\mathcal{A}^{\bL_m}_d = \Big\{&\bA = (\balpha_1,\ldots,\balpha_m) \in \mathbb{R}^{d \times m}: \bA \mbox{ satisfies assumptions } \textbf{(B.1b)}-\textbf{(B.1c)},\\
	&\mbox{and if there exists } k \in \{1,\ldots,m\} \mbox{ s.t. } l_k = 1, \mbox{ then }  \balpha_j^T \balpha_k = 0 \mbox{ for every } j \neq k \Big\}.
\end{align*}
Given a shape vector $\bL_m$, we consider the set of shape-constrained additive index functions given by 
\begin{align*}
	\mathcal{G}^{\bL_m}_d = \Big\{ f^I:\mathbb{R}^d\rightarrow \mathbb{R} \; \Big| \; & f^I(\bx) = f(\bA^T \bx), \mbox{ with } f \in \mathcal{F}^{\bL_m} \mbox{ and } \bA \in \mathcal{A}^{\bL_m}_d \Big\},
\end{align*}
A natural idea is to seek to maximise the scaled partial log-likelihood $\bar{\ell}_{n,d}$ over the pointwise closure of $\mathcal{G}^{\bL_m}_d$. As part of this process, one would like to find a $d \times m$ matrix in $\mathcal{A}^{\bL_m}_d$ that maximises the scaled partial \emph{index log-likelihood} %(which is a function of $d \times m$ matrix $A$)
\[
	\Lambda_n(\bA) \equiv \Lambda_n \Big(\bA; (\bX_1,Y_1),\ldots,(\bX_n,Y_n) \Big) = \sup_{f \in \mathcal{F}^{\bL_m}} \bar{\ell}_{n,m}\Big(f; (\bA^T \bX_1,Y_1),\ldots,(\bA^T \bX_n,Y_n) \Big),
\]
where the dependence of $\Lambda_n(\cdot)$ on $\bL_m$ is suppressed for notational convenience.  We argue, however, that this strategy has two drawbacks: 
\begin{enumerate}[1.]
\setlength{\itemsep}{0pt}
\setlength{\parskip}{0pt}
\setlength{\parsep}{0pt}
\item \textbf{`Saturated' solution}. In certain cases, maximising $\Lambda_n(\bA)$ over $\mathcal{A}^{\bL_m}_d$ can lead to a perfect fit of the model. We demonstrate this phenomenon via the following example. 
\begin{example}
Consider the Gaussian family with the identity link function. We take $d=2$. Assume that there are $n$ observations $(\bX_1,Y_1),\ldots,(\bX_n,Y_n)$ with $\bX_i = (X_{i1},X_{i2})^T$ and that $\bL_2=(2,3)^T$. We assume here that $X_{11} < \ldots < X_{n1}$. Note that it is possible to find an increasing function $f_1$, an decreasing function $f_2$ (with $f_1(0) = f_2(0) = 0$) and a constant $c$ such that $f_1(X_{i1}) + f_2(X_{i1}) + c = Y_i$ for every $i = 1,\ldots, n$. Now pick $\epsilon$ such that 
\[
0 < \epsilon < \min\bigg\{\frac{1}{2},\; \frac{\min_{1 \le i < n} (X_{i+1,1} - X_{i1})}{4 (\max_{1 \le i \le n} |X_{i2}| + 1) }\bigg\},
\]
and let $\bA = (\balpha_1,\balpha_2) = \begin{pmatrix} 1 & 1-\epsilon \\ 0 & \epsilon \end{pmatrix}$. It can be checked that $\{\balpha_2^T \bX_i\}_{i = 1}^n$ is a strictly increasing sequence, so one can find a decreasing function $f_2^*$ such that $f_2^*(\balpha_2^T \bX_i) = f_2 (X_{i1})$ for every $i = 1,\ldots, n$. Consequently, by taking $\hat{f}^I(\bx) = f_1(\bA^T \bx) + f_2^*(\bA^T \bx) + c$, we can ensure that $\hat{f}^I(\bX_i) = Y_i$ for every $i = 1,\ldots,n$.
\end{example}
We remark that this `perfect-fit' phenomenon is quite general. Actually, one can show (via simple modifications of the above example) that it could happen whenever $\bL_m \notin \mathcal{L}_m$, where $\mathcal{L}_m = \{1,\ldots,9\}$ when $m=1$, and $\mathcal{L}_m = \{1,4,5,6\}^m \cup \{1,7,8,9\}^m$ when $m \ge 2$.

\item \textbf{Lack of upper semi-continuity of $\Lambda_n$}. The function $\Lambda_n(\cdot)$ need not be upper-semicontinuous, as illustrated by the following example: 
\begin{example}
Again consider the Gaussian family with the identity link function. Take $d=2$ and $\bL_2=(2,2)^T$. Assume that there are $n=4$ observations, namely, $\bX_1 = (0,0)^T$, $\bX_2 = (0,1)^T$, $\bX_3 = (1,0)^T$, $\bX_4=(1,1)^T$, $Y_1 = Y_2 = Y_3 = 0$ and $Y_4 = 1$. If we take $\bA = \begin{pmatrix} 1 & 0 \\ 0 & 1 \end{pmatrix}$, then it can be shown that $\Lambda_n(\bA) = 3/32$ by fitting $\hat{f}^I(\bX_1) = -\frac{1}{4}$, $\hat{f}^I(\bX_2) = \hat{f}^I(\bX_3) = 1/4$ and $\hat{f}^I(\bX_4) = 3/4$. However, for any sufficiently small $\epsilon >0$, if we define $\bA_\epsilon = \begin{pmatrix} 1-\epsilon & -\epsilon \\ -\epsilon & 1-\epsilon \end{pmatrix}$, then we can take $\hat{f}^I(\bX_i) = Y_i$ for $i = 1,\ldots,4$, so that $\Lambda_n(\bA_\epsilon) = 1/8 > \Lambda_n(\bA)$.
\end{example}
This lack of upper semi-continuity means in general we cannot guarantee the existence of a maximiser. 
\end{enumerate}

As a result, certain modifications are required for our shape-constrained approach to be successful in the context of GAIMs.
To deal with the first issue when $\bL_m \notin \mathcal{L}_m$, we optimise $\Lambda_n(\cdot)$ over the subset of matrices
\begin{align*}
	\mathcal{A}^{\bL_m,\delta}_d = \big\{ \bA \in \mathcal{A}^{\bL_m}_d:  \lambda_{\min}(\bA^T \bA) \ge \delta \big\}
\end{align*}
for some pre-determined $\delta > 0$, where $\lambda_{\min}(\cdot)$ denotes the smallest eigenvalue of a non-negative definite matrix. Other strategies are also possible. For example, when $\bL_m = (2,\ldots,2)^T$, the `perfect-fit' phenomenon can be avoided by only considering matrices that have the same signs in all entries (cf. Section~\ref{Sec:Javelin} below).

To address the second issue, we will show that given $f_0^I \in \mathcal{G}^{\bL_m}_d$ satisfying the identifiability conditions, to obtain a consistent estimator, it is sufficient to find $\tilde{f}^I_n$ from the set
\begin{align}
\label{Eq:SCAIE}
	\tilde{S}^{\bL_m}_n \in \bigg\{ f^I: &\mathbb{R}^d \rightarrow \mathbb{R} \Big| \; f^I(\bx) = f(\bA^T \bx), \mbox{ with } f \in \mathcal{F}^{\bL_m}; \\
\notag	& \mbox{if } \bL_m \in \mathcal{L}_m, \mbox{ then } \bA \in \mathcal{A}^{\bL_m}_d, \mbox{ otherwise}, \bA \in \mathcal{A}^{\bL_m,\delta}_d; \\ 
\notag	& \bar{\ell}_{n,m}\Big(f;(\bA^T \bX_1,Y_1)\ldots,(\bA^T \bX_n,Y_n)\Big) \geq \bar{\ell}_{n,m}\Big(f_0;(\bA_0^T \bX_1,Y_1)\ldots,(\bA_0^T \bX_n,Y_n)\Big)\bigg\},
\end{align}
for some $\delta \in (0,\lambda_{\mathrm{min}}(\bA_0^T \bA_0)]$. We write $\tilde{f}_n^I(\bx) = \tilde{f}_n(\tilde{\bA}_n^T \bx)$, where $\tilde{\bA}_n = (\tilde{\balpha}_{n,1},\ldots,\tilde{\balpha}_{n,m}) \in \mathcal{A}^{\bL_m}_d \mbox{ or } \mathcal{A}^{\bL_m,\delta}_d$ is the estimated index matrix and $\tilde{f}_n(\bz) = \sum_{j=1}^m \tilde{f}_{n,j}(z_j) + \tilde{c}_n$ is the estimated additive function satisfying $\tilde{f}_{n,j}(0) = 0$ for every $j = 1,\ldots,m$. We call $\tilde{f}^I_n$ the \emph{shape-constrained additive index estimator} (SCAIE), and write $\tilde{\bA}_n$ and $\tilde{f}_{n,1},\ldots,\tilde{f}_{n,m}$ respectively for the corresponding estimators of the index matrix and ridge functions. 

When there exists a maximiser of the function $\Lambda_n(\cdot)$ over $\mathcal{A}^{\bL_m}_d$ or $\mathcal{A}^{\bL_m,\delta}_d$, the set $\tilde{S}^{\bL_m}_n$ is non-empty; otherwise, a function satisfying (\ref{Eq:SCAIE}) still exists in view of the following proposition: 
\begin{prop}
\label{Prop:SCAIEcontinuity}
The function $\Lambda_n(\cdot)$ is lower-semicontinuous.
\end{prop}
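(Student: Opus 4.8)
The plan is to exhibit $\Lambda_n(\cdot)$ as a pointwise supremum of lower-semicontinuous functions of the index matrix. Writing $L_n(\bA,f):=\bar{\ell}_{n,m}\bigl(f;(\bA^T\bX_1,Y_1),\ldots,(\bA^T\bX_n,Y_n)\bigr)$ for $f\in\mathcal{F}^{\bL_m}$, we have $\Lambda_n(\bA)=\sup_{f\in\mathcal{F}^{\bL_m}}L_n(\bA,f)$; since a pointwise supremum of lower-semicontinuous functions is lower-semicontinuous (each super-level set $\{\bA:\Lambda_n(\bA)>t\}$ is a union of open sets), it would suffice to know that $\bA\mapsto L_n(\bA,f)$ is lower-semicontinuous for each fixed $f$. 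That is false in general, because in the monotone cases the components of $f$ may have jumps, so $f(\bA^T\bX_i)$ need not depend continuously on $\bA$. The way round this is that $L_n(\bA,f)$ depends on $f$ only through the finite vector $\bigl(f(\bA^T\bX_1),\ldots,f(\bA^T\bX_n)\bigr)$, so we may restrict the supremum to \emph{piecewise linear} elements of $\mathcal{F}^{\bL_m}$.

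For the first step, let $\mathcal{F}^{\bL_m}_{\mathrm{pl}}$ denote the set of $f\in\mathcal{F}^{\bL_m}$ all of whose components are continuous and piecewise linear. Given $f\stackrel{\mathcal{F}^{\bL_m}}{\sim}(f_1,\ldots,f_m,c)$ and a matrix $\bA=(\balpha_1,\ldots,\balpha_m)$, I would let $\tilde f_j$ be the continuous function that is linear between consecutive knots of the set $\{\balpha_j^T\bX_i:i=1,\ldots,n\}\cup\{0\}$, interpolates $f_j$ there, and is extended affinely with the extreme slopes beyond the largest and smallest knots. One then checks that $\tilde f_j$ obeys the shape constraint with label $l_j$ and satisfies $\tilde f_j(0)=0$: the monotone and linear cases are immediate, while for the convex (respectively concave) cases one uses that the difference quotients of a convex (concave) function are non-decreasing (non-increasing), so that the successive slopes of $\tilde f_j$ are monotone — and, in the increasing/decreasing sub-cases, all of the correct sign, so that the affine extrapolations preserve both convexity/concavity and monotonicity. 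Hence $\tilde f\stackrel{\mathcal{F}^{\bL_m}}{\sim}(\tilde f_1,\ldots,\tilde f_m,c)\in\mathcal{F}^{\bL_m}_{\mathrm{pl}}$ agrees with $f$ at every $\bA^T\bX_i$, so $L_n(\bA,\tilde f)=L_n(\bA,f)$, and therefore $\Lambda_n(\bA)=\sup_{f\in\mathcal{F}^{\bL_m}_{\mathrm{pl}}}L_n(\bA,f)$ for every $\bA$.

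For the second step, fix $f\in\mathcal{F}^{\bL_m}_{\mathrm{pl}}$. Since each component of $f$ is continuous, $\bA\mapsto a_i(\bA):=f(\bA^T\bX_i)$ is continuous, and $L_n(\bA,f)=n^{-1}\sum_{i=1}^n\psi_i(a_i(\bA))$, where $\psi_i(a):=Y_ia-B(a)$, extended as in Section~\ref{Sec:gam} to take the value $-\infty$ when $a\notin\dom(B)$. For the Gaussian, Poisson and Binomial families $B$ is finite and continuous on $\mathbb{R}$, so each $\psi_i$, and hence $L_n(\cdot,f)$, is continuous. For the Gamma family $\psi_i\colon\mathbb{R}\to[-\infty,\infty)$ is still continuous, since $\psi_i(a)\to-\infty$ as $a\uparrow 0$ (because $-\log(-a)\to\infty$ there); thus each $\bA\mapsto\psi_i(a_i(\bA))$ is continuous into $[-\infty,\infty)$, and the finite sum $L_n(\cdot,f)$ is lower-semicontinuous (at any $\bA$ where it is finite all summands are finite, and continuity of each gives continuity of the sum; elsewhere the value is $-\infty$ and there is nothing to prove). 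Taking the supremum over $\mathcal{F}^{\bL_m}_{\mathrm{pl}}$ now gives lower-semicontinuity of $\Lambda_n$; note that $\Lambda_n$ may take the value $+\infty$, which causes no difficulty for this argument.

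The main obstacle is the shape-preservation claim in the first step: one must be certain that piecewise linear interpolation through the (possibly coincident) projected design points together with the origin, followed by affine extrapolation, genuinely stays inside the class prescribed by $l_j$ — in particular that in the convex/concave increasing and decreasing sub-cases the extrapolated pieces keep the right monotonicity. The monotone-difference-quotient characterisation of convexity/concavity is exactly what is needed here; once this is in hand, the remainder is routine bookkeeping with continuity and with the conventions for $B$ on the boundary of its domain.
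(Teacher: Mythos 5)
Your proposal is correct and is essentially the paper's own argument: the key step in both is to replace an arbitrary (or near-maximising) $f\in\mathcal{F}^{\bL_m}$ by a continuous piecewise-linear member of the class that agrees with it at the projected design points (and at $0$), so that $\bA\mapsto\bar{\ell}_{n,m}(f;(\bA^T\bX_1,Y_1),\ldots,(\bA^T\bX_n,Y_n))$ becomes (lower-semi)continuous, the only difference being that you package this as ``$\Lambda_n$ is a supremum of lower-semicontinuous functions over the piecewise-linear subclass'' while the paper runs the equivalent $\varepsilon$-near-maximiser/$\liminf$ argument along a convergent sequence $\bA^k\rightarrow\bA$. Your explicit verification that the interpolant respects the shape constraints (and the Gamma-case handling of $B$) fills in details the paper leaves implicit, so there is no gap.
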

Note that if a maximiser of $\Lambda_n(\cdot)$ does not exist, there must exist some $\mathring{\bA}_n$ such that $\Lambda_n(\mathring{\bA}_n) > \Lambda_n(\bA_0)$. It then follows from Proposition~\ref{Prop:SCAIEcontinuity} that 
\[
\liminf_{\bA \rightarrow \mathring{\bA}_n} \Lambda_n(\bA) \ge \Lambda_n(\mathring{\bA}_n) > \Lambda_n(\bA_0) \ge \bar{\ell}_{n,m}\big(f_0;(\bA_0^T \bX_1,Y_1)\ldots,(\bA_0^T \bX_n,Y_n)\big),
\]
so any $\bA$ sufficiently close to $\mathring{\bA}_n$ yields a prediction function in $\tilde{S}^{\bL_m}_n$. A stochastic search algorithm can be employed to find such matrices; see Section~\ref{Sec:computeSCAIE} for details.

\subsection{Consistency of SCAIE}
\label{Sec:indextheory}
In this subsection, we show the consistency of $\hat{f}^I_n$ under a random design setting. In addition to \textbf{(A.1)} -- \textbf{(A.2)}, we require the following conditions:
\begin{enumerate}
\setlength{\itemsep}{0pt}
\setlength{\parskip}{0pt}
\setlength{\parsep}{0pt}
\item[{\textbf{(B.2)}}] The true prediction function $f_0^I$ belongs to $\mathcal{G}^{\bL_m}_d$.
\item[{\textbf{(B.3)}}] Fix $\bL_m \in \{1,2,\ldots,9\}^m$.  Suppose that $Y|\bX \sim \mathrm{EF}_{g,B}\bigl(g^{-1}(f_0^I(\bX)),\phi_0\bigr)$, where $\phi_0 \in (0,\infty)$ is the true dispersion parameter.
\item[{\textbf{(B.4)}}] $f_0^I$ is continuous on $\mathbb{R}^d$.
\item[{\textbf{(B.5)}}] $f_0^I$ and the corresponding index matrix $\bA_0$ satisfy the identifiability conditions \textbf{(B.1a)} -- \textbf{(B.1f)}.
\end{enumerate}
\begin{thm}
\label{Thm:SCAIEconsistency}
Assume \textbf{(A.1)} -- \textbf{(A.2)} as well as \textbf{(B.2)} -- \textbf{(B.5)}. Then, provided $\delta \leq \lambda_{\mathrm{min}}(\bA_0^T \bA_0)$ when $\bL_m \notin \mathcal{L}_m$, we have for every $a_0 \ge 0$ that
\[
	\sup_{\tilde{f}_n^I \in \tilde{S}^{\bL_m}_n } \sup_{\bx \in [-a_0, a_0]^d} |\tilde{f}_n^I(\bx) - f_0^I(\bx)| \stackrel{a.s.}{\rightarrow} 0, \quad\mbox{ as } n \rightarrow \infty.
\]
\end{thm}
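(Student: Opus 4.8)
The plan is to run a subsequence argument built on the machinery behind Theorem~\ref{Thm:SCMLEconsistency}, the new feature being that the design of the inner additive problem is itself estimated, through $\tilde\bA_n$. Suppose the conclusion fails. Then, working on the almost-sure event on which the uniform law of large numbers stated below holds, there are $a_0,\epsilon>0$ and, along a subsequence that I relabel as the full sequence, choices $\tilde f_n^I\in\tilde S_n^{\bL_m}$ with $\sup_{\bx\in[-a_0,a_0]^d}|\tilde f_n^I(\bx)-f_0^I(\bx)|\ge\epsilon$ for every $n$. Each admissible index matrix lies in $\mathcal{A}^{\bL_m}_d$ or $\mathcal{A}^{\bL_m,\delta}_d$, whose columns satisfy $\|\balpha_j\|_1=1$, so the closure of this set is compact and I may pass to a further subsequence along which $\tilde\bA_n\to\bA_\infty$. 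After flipping the signs of the columns $\balpha_{\infty,j}$ with $l_j\in\{1,4,7\}$ --- which changes neither the relevant shape class nor the composed function --- $\bA_\infty$ may be taken in $\mathcal{A}^{\bL_m}_d$, respectively $\mathcal{A}^{\bL_m,\delta}_d$; in the latter case $\lambda_{\min}(\bA_\infty^T\bA_\infty)\ge\delta$ persists, so $\bA_\infty$ has full column rank, and since $\delta\le\lambda_{\min}(\bA_0^T\bA_0)$ the degenerate ``perfect-fit'' behaviour illustrated in the first example of Section~\ref{Sec:gaimMLE} cannot occur for $\bA_\infty$.

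The heart of the proof is a uniform law of large numbers. Write $\ell_\bA(f):=\mathbb{E}\{Yf(\bA^T\bX)-B(f(\bA^T\bX))\}$, the population analogue of $\bar\ell_{n,m}(f;(\bA^T\bX_1,Y_1),\ldots,(\bA^T\bX_n,Y_n))$. Arguing as in the proof of Theorem~\ref{Thm:SCMLEconsistency}, but uniformly over $\bA$ in the relevant compact set, I would first show that, almost surely, for all large $n$ the components $\tilde f_{n,j}$ of the fitted ridge part are uniformly bounded on any fixed compact interval: the defining inequality $\bar\ell_{n,m}(\tilde f_n;(\tilde\bA_n^T\bX_i,Y_i))\ge\bar\ell_{n,m}(f_0;(\bA_0^T\bX_i,Y_i))$, whose right-hand side converges almost surely to $\ell_{\bA_0}(f_0)>-\infty$, rules out any additive index function that is too extreme on a set of positive $\bX$-probability, since such a function drives $B(f(\bA^T\bX))$ large --- or leaves $\dom(B)$, in the Gamma case --- and so makes the partial log-likelihood too small. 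Restricting to such uniformly bounded shape-constrained ridge functions and to $\bA$ in a compact set, the family of integrands $(\bx,y)\mapsto yf(\bA^T\bx)-B(f(\bA^T\bx))$ on $[-a_0,a_0]^d\times\mathbb{R}$ has finite bracketing entropy --- the shape-constrained pieces contribute the usual polynomial-in-$1/\varepsilon$ bracketing numbers, the finite-dimensional set of linear maps only a mild correction --- so it is Glivenko--Cantelli and $\sup_{\bA,f}\bigl|\bar\ell_{n,m}(f;(\bA^T\bX_i,Y_i))-\ell_\bA(f)\bigr|\stackrel{a.s.}{\rightarrow}0$.

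Three facts then finish the argument. First, the defining inequality and this uniform convergence give $\liminf_n\ell_{\tilde\bA_n}(\tilde f_n)\ge\ell_{\bA_0}(f_0)$. Second, since $B$ is strictly convex and $g=(B')^{-1}$ for the canonical link, for each $\bx$ the concave map $a\mapsto\mathbb{E}(Y\mid\bX=\bx)\,a-B(a)$ is uniquely maximised at $a=f_0^I(\bx)$, whence $\ell_\bA(f)\le\ell_{\bA_0}(f_0)$ for every admissible $(\bA,f)$, with equality if and only if $f(\bA^T\bx)=f_0^I(\bx)$ for $\bX$-almost every $\bx$; combined with the first fact this forces $\ell_{\tilde\bA_n}(\tilde f_n)\to\ell_{\bA_0}(f_0)$ and, via the strict concavity and the uniform bounds, $\tilde f_n^I\to f_0^I$ Lebesgue-almost everywhere on $[-a_0,a_0]^d$ along a further subsequence. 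Third, applying Helly's selection theorem after one more extraction, each monotone or convex component $\tilde f_{n,j}$ (vanishing at $0$ and uniformly bounded) converges pointwise to a limit $f_{\infty,j}$; because the constrained index matrices are non-degenerate (through the eigenvalue floor when $\bL_m\notin\mathcal{L}_m$, automatically otherwise) and convex, concave and linear functions on $\mathbb{R}$ are continuous, each $f_{\infty,j}$ agrees Lebesgue-almost everywhere with a continuous function and hence is itself continuous, so $f_\infty^I:=\sum_{j=1}^m f_{\infty,j}(\balpha_{\infty,j}^T\cdot)+c_\infty$ is continuous and, agreeing almost everywhere with the continuous $f_0^I$, equals it identically; then, by the Polya-type local uniformity of convergence of monotone or convex functions to a continuous limit together with $\tilde\balpha_{n,j}\to\balpha_{\infty,j}$, we obtain $\tilde f_n^I\to f_0^I$ uniformly on $[-a_0,a_0]^d$, contradicting $\sup_{\bx\in[-a_0,a_0]^d}|\tilde f_n^I(\bx)-f_0^I(\bx)|\ge\epsilon$.

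The step I expect to be the main obstacle is exactly the data-dependence of the inner design: because $\tilde\bA_n$ is random, Theorem~\ref{Thm:SCMLEconsistency} cannot be invoked as a black box, and its two key ingredients --- boundedness of the fitted ridge functions and the Glivenko--Cantelli property --- must be re-established uniformly over $\bA$ in a compact set, after which one must check that the population identifiability of $f_0^I$ survives passage to the limiting matrix $\bA_\infty$. When $\bL_m\notin\mathcal{L}_m$ this is precisely where the eigenvalue floor $\delta\le\lambda_{\min}(\bA_0^T\bA_0)$ enters, excluding the degenerate limits that would otherwise allow a perfect in-sample fit; keeping track of the non-closedness of $\mathcal{A}^{\bL_m}_d$ coming from condition~\textbf{(B.1c)}, and of the harmless column sign flips it necessitates, is a further bookkeeping burden.
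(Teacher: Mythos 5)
Your overall strategy shares the paper's main ingredients (bound the estimator, get a Glivenko--Cantelli property uniform in the index matrix, exploit unique maximisation of the population criterion), but it contains a step that genuinely fails and leaves the central new difficulty unresolved. The failing step is the claim that the defining inequality forces the fitted ridge components $\tilde{f}_{n,j}$ to be uniformly bounded on compacts, which you then feed into Helly selection componentwise. When $m \ge 2$ and $\bL_m \in \mathcal{L}_m$ there is no eigenvalue floor, so nothing prevents nearly collinear columns carrying huge, mutually cancelling ridge functions: take both labels convex, $\balpha_1 = \balpha_2$, $f_1(z) = Kz$ and $f_2(z) = -Kz + g(z)$ with $g$ convex and bounded; the composite $f^I$ is tame (and it is only the composite that the likelihood inequality controls), while the individual components blow up with $K$. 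So componentwise boundedness is false in this case, and the Helly-on-components argument collapses. The paper handles precisely this case at the level of the composite convex/concave function (Step~2(a) and Step~4(a) of its proof, in the spirit of \citet{LimGlynn2012}), never through the ridge decomposition. In the complementary case $\bL_m \notin \mathcal{L}_m$, the boundedness step you describe as ``arguing as in Theorem~\ref{Thm:SCMLEconsistency} but uniformly over $\bA$'' is exactly where a new tool is needed: the region on which a large composite must be large (Lemma~\ref{Lem:AddIndexFuncBound}) now depends on the random matrix $\bA$, so the fixed finite collection of boxes plus the strong law used in Theorem~\ref{Thm:SCMLEconsistency} no longer applies; the paper replaces it by a Glivenko--Cantelli theorem for classes of convex sets \citep{BhattacharyaRao1976}, and the uniform-in-$\bA$ bracketing of the ridge classes is a separate construction (Lemma~\ref{Lem:GAIMGC}, shifting brackets along a compact net of directions). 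You correctly identify this as the main obstacle, but supplying these mechanisms is the substance of the proof, and your proposal does not supply them.

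Beyond this, your endgame also diverges from the paper's and has loose ends of its own: the paper finishes with the quantitative well-separated-maximum argument (the gap $\zeta$ from Lemma~\ref{Lem:positive}) plus Glivenko--Cantelli, with no subsequence extraction, no limiting matrix $\bA_\infty$ and no Helly step, which sidesteps the issues you face in identifying the limit (pointwise limits of monotone components need not be continuous, and passing from Lebesgue-a.e. agreement with $f_0^I$ to uniform convergence on $[-a_0,a_0]^d$ needs an additional argument). A minor point: bracketing entropies of bounded monotone or convex classes are exponential, not polynomial, in $1/\epsilon$, though finiteness at each $\epsilon$ is all that is needed for the Glivenko--Cantelli conclusion.
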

Consistency of the estimated index matrix and the ridge functions is established in the next corollary. 
\begin{cor}
\label{Cor:SCAIEconsistency}
Assume \textbf{(A.1)} -- \textbf{(A.2)} and \textbf{(B.2)} -- \textbf{(B.5)}.  Then, provided $\delta \leq \lambda_{\mathrm{min}}(\bA_0^T \bA_0)$ when $\bL_m \notin \mathcal{L}_m$, we have for every $a_0 \ge 0$ that  
\[
	\sup_{\tilde{f}_n^I \in \tilde{S}^{\bL_m}_n } \min_{\tilde{\pi}_n \in \mathcal{P}_m} \biggl\{\sum_{j=1}^m \|\tilde{\balpha}_{n,\tilde{\pi}_n(j)}-\balpha_{0,j}\|_1 + \sum_{j=1}^m \sup_{z_j \in [-a_0, a_0]}|\tilde{f}_{n,\tilde{\pi}_n(j)}(z_j) - f_{0,j}(z_j)| + |\tilde{c}_n-c_0| \biggr\} \stackrel{a.s.}{\rightarrow} 0, 
\]
as $n \rightarrow \infty$, where $\mathcal{P}_m$ denotes the set of permutations of $\{1,\ldots,m\}$.
\end{cor}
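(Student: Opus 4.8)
The plan is to deduce Corollary~\ref{Cor:SCAIEconsistency} from the uniform consistency of the prediction function established in Theorem~\ref{Thm:SCAIEconsistency}, using the identifiability conditions \textbf{(B.1a)}--\textbf{(B.1f)} to disentangle the additive structure in the limit, via a compactness-plus-subsequence argument. Work throughout on the almost sure event on which the conclusion of Theorem~\ref{Thm:SCAIEconsistency} holds for every $a_0 \ge 0$. For each $n$, pick $\tilde f^I_n \in \tilde S^{\bL_m}_n$, with representation $\tilde f^I_n(\bx) = \sum_{j=1}^m \tilde f_{n,j}(\tilde\balpha_{n,j}^T\bx) + \tilde c_n$, for which the quantity $D_n$ in braces in the corollary is within $1/n$ of its supremum over $\tilde S^{\bL_m}_n$; it suffices to prove $D_n \to 0$. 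By the subsequence principle it is enough to show that every subsequence admits a further subsequence along which $D_n \to 0$, so fix a subsequence.

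Since each $\tilde\balpha_{n,j}$ lies in the compact set $\{\balpha \in \mathbb{R}^d : \|\balpha\|_1 = 1\}$, pass to a further subsequence (not relabelled) along which $\tilde\bA_n \to \bA_\ast = (\balpha_{\ast,1},\ldots,\balpha_{\ast,m})$, with $\|\balpha_{\ast,j}\|_1=1$; when $\bL_m \notin \mathcal{L}_m$ the constraint $\lambda_{\min}(\tilde\bA_n^T\tilde\bA_n) \ge \delta$ passes to the limit, so $\bA_\ast$ then has full column rank. Evaluating at $\bx = \mathbf{0}$ gives $\tilde c_n = \tilde f^I_n(\mathbf{0}) \to f^I_0(\mathbf{0}) = c_0$. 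The crucial analytic step is to establish the \emph{uniform boundedness of the ridge functions}: for every $b > 0$, $\sup_n \max_{1 \le j \le m} \sup_{|z| \le b} |\tilde f_{n,j}(z)| < \infty$. Granting this, Helly's selection theorem (or, for convex/concave ridge functions, the corresponding local-uniform convergence result), combined with a diagonal argument, yields a further subsequence along which $\tilde f_{n,j} \to f_{\ast,j}$ pointwise on $\mathbb{R}$ for every $j$, where $f_{\ast,j}$ is real-valued, satisfies $f_{\ast,j}(0)=0$, and obeys the shape restriction $l_j$. Passing to the limit in the representation of $\tilde f^I_n$ then gives $f^I_0(\bx) = \sum_{j=1}^m f_{\ast,j}(\balpha_{\ast,j}^T\bx) + c_0$ for all $\bx \in \mathbb{R}^d$.

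To finish, replace, if necessary, $(\balpha_{\ast,j}, f_{\ast,j})$ by $(-\balpha_{\ast,j}, f_{\ast,j}(-\cdot))$ for those $j$ with $l_j \in \{1,4,7\}$ whose normalisation \textbf{(B.1c)} fails in the limit; this leaves the $j$th summand and the constraint $f_{\ast,j}(0)=0$ unchanged and preserves the shape restriction, so that afterwards $\bA_\ast \in \mathcal{A}^{\bL_m}_d$ (respectively $\mathcal{A}^{\bL_m,\delta}_d$) and $f^I_0$ admits a representation in $\mathcal{G}^{\bL_m}_d$. Since by \textbf{(B.5)} the true representation of $f^I_0$ obeys all of \textbf{(B.1a)}--\textbf{(B.1f)}, an identifiability lemma (established in the supplementary material) shows that this representation is unique up to a permutation of its components: there exists $\pi_\ast \in \mathcal{P}_m$ with $\balpha_{\ast,j} = \balpha_{0,\pi_\ast(j)}$ and $f_{\ast,j} = f_{0,\pi_\ast(j)}$ for every $j$. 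Hence $\tilde\balpha_{n,\pi_\ast^{-1}(k)} \to \balpha_{0,k}$ and $\tilde f_{n,\pi_\ast^{-1}(k)} \to f_{0,k}$ pointwise for each $k$. Because $f^I_0$ is continuous and $\bA_0$ has full column rank, each $f_{0,k}$ is continuous (otherwise, moving $\bx$ across a level set of $\balpha_{0,k}^T\bx$ while holding the remaining $\balpha_{0,l}^T\bx$ fixed would make $f^I_0$ discontinuous), so a monotone (respectively convex/concave) sequence converging pointwise to a continuous limit converges uniformly on $[-a_0,a_0]$. Choosing $\tilde\pi_n = \pi_\ast^{-1}$ in the minimum over $\mathcal{P}_m$, we conclude that $D_n \to 0$ along this subsequence, as required.

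The main obstacle is the uniform boundedness of the ridge functions, i.e. ruling out asymptotic cancellation within $\sum_j \tilde f_{n,j}(\tilde\balpha_{n,j}^T\bx)$, which (once $\tilde c_n$ is controlled) is bounded on $[-a_0,a_0]^d$ uniformly in $n$. When $\bA_\ast$ has full column rank --- always the case if $\bL_m \notin \mathcal{L}_m$ --- the images $\tilde\bA_n^T([-a_0,a_0]^d)$ eventually contain a fixed box $\prod_j[-r,r]$, and one can evaluate the bounded sum at finitely many points whose images under $\tilde\bA_n^T$ have each coordinate near $\pm r$ with the remaining coordinates pinned to a short interval; exploiting monotonicity/convexity of the $\tilde f_{n,j}$ together with $\tilde f_{n,j}(0)=0$ then forces $\sup_{|z|\le r}|\tilde f_{n,j}(z)|$ to be bounded uniformly in $n$, after which a covering/bootstrap step gives arbitrary $b$. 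The remaining possibility, $\bL_m \in \mathcal{L}_m$ with $\bA_\ast$ of deficient rank, is the technically most involved case: here all ridge functions share the same curvature, so $\sum_j \tilde f_{n,j}(\tilde\balpha_{n,j}^T\bx)$ is itself convex (or concave) in $\bx$ and its boundedness on the box yields a uniform Lipschitz bound, and one then uses assumptions \textbf{(B.1d)}--\textbf{(B.1f)} together with the precise definition of $\mathcal{L}_m$ to preclude the cancellation that a rank drop would require. Everything else is a routine consequence of Theorem~\ref{Thm:SCAIEconsistency} and standard compactness arguments.
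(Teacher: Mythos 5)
Your overall architecture --- deduce the corollary from Theorem~\ref{Thm:SCAIEconsistency} by compactness of the index constraints, Helly/Arzel\`a--Ascoli selection for the ridge functions, and an identifiability theorem to pin the limit down up to permutation --- is exactly the route the paper intends: its proof consists of citing Theorem~1 of \citet{Yuan2011} together with Theorem~\ref{Thm:SCAIEconsistency}, in the style of Theorem~5 of \citet{SamworthYuan2012}. (Note that the identifiability statement is not ``established in the supplementary material''; it is imported from \citet{Yuan2011}.) The problem is that the step you yourself identify as the crux is not proved, and the mechanism you propose for the hardest case would fail. You cannot rule out asymptotic cancellation between the estimated components ``using assumptions \textbf{(B.1d)}--\textbf{(B.1f)}'', because those conditions are imposed, via \textbf{(B.5)}, on the \emph{true} representation $(f_{0,1},\ldots,f_{0,m},\bA_0)$ only; the estimator is merely required to have $\tilde{\bA}_n \in \mathcal{A}^{\bL_m}_d$ (or $\mathcal{A}^{\bL_m,\delta}_d$) and $\tilde{f}_n \in \mathcal{F}^{\bL_m}$, and $\mathcal{A}^{\bL_m}_d$ enforces orthogonality only when some \emph{label} $l_k$ equals $1$, not when a fitted component happens to be linear.

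To see that this is not a technicality, take $d=m=2$, $\bL_2=(4,4)$ and a truth $f_0^I(\bx)=f_{0,1}(\balpha_{0,1}^T\bx)+\balpha_{0,2}^T\bx$ with $f_{0,1}$ convex and non-linear and $\balpha_{0,1}^T\balpha_{0,2}=0$ (a linear ridge is convex, so this truth satisfies \textbf{(B.1a)}--\textbf{(B.1f)}). For any $c>0$ one can rewrite $f_0^I(\bx)=\{f_{0,1}(\balpha_{0,1}^T\bx)-c\,\balpha_{0,1}^T\bx\}+\gamma_c\,\bbeta_c^T\bx$, where $\bbeta_c=(c\balpha_{0,1}+\balpha_{0,2})/\gamma_c$ and $\gamma_c=\|c\balpha_{0,1}+\balpha_{0,2}\|_1$: both components are convex and vanish at $0$, $\bbeta_c$ satisfies \textbf{(B.1b)}--\textbf{(B.1c)} for large $c$, no label equals $1$ so no orthogonality is required of the estimate, and the prediction function is identical to $f_0^I$, so taking $c=c_n\rightarrow\infty$ produces elements of $\tilde{S}^{\bL_m}_n$ whose second index tends to $\balpha_{0,1}$ rather than $\balpha_{0,2}$ and whose ridge components blow up. Hence uniform boundedness of the ridge estimates genuinely requires an argument exploiting strict non-linearity of the non-linearly-labelled true components (second differences, or the structure of $\mathcal{L}_m$), not \textbf{(B.1d)}--\textbf{(B.1f)}; and in degenerate cases like the one above the claimed bound simply does not hold. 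Relatedly, even granting boundedness, your identifiability step needs the \emph{limit} representation $(f_{*,j},\bA_*)$ to satisfy all of \textbf{(B.1a)}--\textbf{(B.1f)} before uniqueness up to permutation can be invoked: you repair only \textbf{(B.1c)} by sign flips, and nothing in the argument excludes limits with repeated or linearly dependent indices, a non-orthogonal linear limit component, or a second quadratic component when $\bL_m \in \mathcal{L}_m$. These are the same cancellation phenomena in another guise, and they are exactly where the real work of the proof lies.
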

Note that we can only hope to estimate the set of projection indices, and not their ordering (which is arbitrary).  This explains why we take the minimum over all permutations of $\{1,\ldots,m\}$ in Corollary~\ref{Cor:SCAIEconsistency} above.

\section{Computational aspects}
\label{Sec:compute}
\subsection{Computation of SCMLE}
\label{Sec:computeSCMLE}
Throughout this subsection, we fix $\bL_d = (l_1,\ldots,l_d)^T$, the EF distribution and the values of the observations, and present an algorithm for computing SCMLE described in Section~\ref{Sec:gam}.  We seek to reformulate the problem as convex program in terms of basis functions and apply an active set algorithm \citep{NocedalWright2006}.  Such algorithms have recently become popular for computing various shape-constrained estimators. For instance, \citet{GJW2008} used a version, which they called the `support reduction algorithm' in the one-dimensional convex regression setting; \citet{DumbgenRufibach2011} applied another variant to compute the univariate log-concave maximum likelihood density estimator. Recently, \citet{Meyer2013b} developed a `hinge' algorithm for quadratic programming, which can also be viewed as a variant of the active set algorithm. 

Without loss of generality, we assume in the following that only the first $d_1$ components ($d_1 \le d$) of $f_0$ are linear, i.e. $l_1 = \cdots = l_{d_1} = 1$ and $(l_{d_1+1},\ldots,l_d)^T \in \{2,\ldots,9\}^{d-d_1}$. Furthermore, we assume that the order statistics $\{X_{(i),j}\}_{i=1}^n$ of $\{X_{ij}\}_{i=1}^n$ are distinct for every $j = d-d_1+1,\ldots,d$.  Fix $\bx = (x_1,\ldots,x_d)^T \in \mathbb{R}^d$ and define the basis functions $g_{0j}(x_j) = x_j$ for $j=1,\ldots,d_1$ and, for $i=1,\ldots,n$, 
\begin{align*}
g_{ij}(x_j) = 
\begin{cases}
  \mathbbm{1}_{\{X_{(i),j} \le x_j\}} - \mathbbm{1}_{\{X_{(i),j} \le 0\}}, & \quad \mbox{if } l_j = 2, \\
  \mathbbm{1}_{\{x_j  <  X_{(i),j}\}} - \mathbbm{1}_{\{0  <  X_{(i),j}\}}, & \quad \mbox{if } l_j = 3, \\
  (x_j - X_{(i),j}) \mathbbm{1}_{\{X_{(i),j} \le x_j\}} + X_{(i),j} \mathbbm{1}_{\{X_{(i),j} \le 0\}}, & \quad \mbox{if } l_j = 4 \mbox{ or } l_j = 5, \\
  (X_{(i),j} - x_j) \mathbbm{1}_{\{x_j \le X_{(i),j}\}} - X_{(i),j} \mathbbm{1}_{\{0 \le X_{(i),j}\}}, & \quad \mbox{if } l_j = 6,  \\
  (X_{(i),j} - x_j) \mathbbm{1}_{\{X_{(i),j} \le x_j\}} - X_{(i),j} \mathbbm{1}_{\{X_{(i),j} \le 0\}}, & \quad \mbox{if } l_j = 7 \mbox{ or } l_j = 9, \\
  (x_j - X_{(i),j}) \mathbbm{1}_{\{x_j \le X_{(i),j}\}} + X_{(i),j} \mathbbm{1}_{\{0 \le X_{(i),j}\}}, & \quad \mbox{if } l_j = 8.
\end{cases} 
\end{align*}
Note that all the basis functions given above are zero at the origin.  Let $\mathcal{W}$ denote the set of weight vectors
\[
	\bw = (w_{00},w_{01},\ldots,w_{0d_1}, w_{1(d_1+1)},\ldots,w_{n(d_1+1)},\ldots,w_{1d},\ldots,w_{nd})^T \in \mathbb{R}^{n(d-d_1)+d_1+1}
\]
satisfying
\begin{align*}
\begin{cases}
  w_{ij} \ge 0, & \text{for every } i = 1, \ldots, n \text{ and every } j \text{ with } l_j \in \{2,3,5,6,8,9\} \\
  w_{ij} \ge 0, & \text{for every } i = 2, \ldots, n \text{ and every } j \text{ with } l_j \in \{4,7\}.
\end{cases}
\end{align*}
To compute SCMLE, it suffices to consider prediction functions of the form
\[
	f^{\bw}(\bx) = w_{00} + \sum_{j=1}^{d_1} w_{0j}g_{0j}(x_j) + \sum_{j=d_1+1}^d \sum_{i=1}^n w_{ij} g_{ij}(x_j)
\]
subject to $\bw \in \mathcal{W}$.  Our optimisation problem can then be reformulated as maximising 
\[
	\psi_n(\bw) = \bar{\ell}_{n,d}(f^{\bw}; (\bX_1,Y_1), \ldots,(\bX_n,Y_n))
\]
over $\mathbf{w} \in \mathcal{W}$. Note that $\psi_n$ is a concave (but not necessarily strictly concave) function. Since
\[
	\sup_{\bw \in \mathcal{W}} \bar{\ell}_{n,d}(f^{\bw}) = \bar{\ell}_{n,d}(\hat{f}_n),
\]
our goal here is to find a sequence $(\bw^{(k)})$ such that $\psi_n(\bw^{(k)}) \rightarrow \sup_{\bw \in \mathcal{W}} \bar{\ell}_{n,d}(f^{\bw})$ as $k \rightarrow \infty$.  In Table~\ref{Tab:codeSCMLE}, we give the pseudo-code for our active set algorithm for finding SCMLE, which is implemented in the \texttt{R} package \texttt{scar} \citep{ChenSamworth2014}.
\begin{table}[!htbp]
\begin{tabular}{ l p{14cm} }
\hline 
  \textbf{Step 1}: &  \textbf{Initialisation - outer loop}: sort $\{X_i\}_{i=1}^n$ coordinate by coordinate; define the initial working set as $\mathcal{S}_1 = \{(0,j) | j \in \{1,\ldots,d_1\}\} \cup \{(1,j) | l_j \in \{4,7\}\}$; in addition, define the set of potential elements as 
\[
 \mathcal{S} = \Big\{(i,j): \; i = 1,\ldots,n, j = d_1+1 ,\ldots, d\Big\};
\] 
set the iteration count $k = 1$.\\
  \textbf{Step 2}: &  \textbf{Initialisation - inner loop}: if $k > 1$, set $\bw^* = \bw^{(k-1)}$.\\
  \textbf{Step 3}: &  \textbf{Unrestricted generalised linear model (GLM)}: solve the following unrestricted GLM problem using iteratively reweighted least squares (IRLS):
\[
  \frac{1}{n}\sum_{h=1}^n \biggl\{ Y_h \Big(\sum_{(i,j) \in \mathcal{S}_k} w_{ij} g_{ij}(X_{hj}) + w_{00}\Big) - B\Big(\sum_{(i,j) \in \mathcal{S}_k} w_{ij} g_{ij}(X_{hj}) + w_{00} \Big) \biggr\},
\]
where for $k > 1$, $\bw^*$ is used as a warm start. Store its solution in $\bw^{(k)}$ (with zero weights for the elements outside $\mathcal{S}_k$).\\
  \textbf{Step 4}: &  \textbf{Working set refinement}: if $k=1$ or if $w_{ij} > 0$ for every $(i,j) \in \mathcal{S}_k \backslash \mathcal{S}_1$, go to \textbf{Step~5}; otherwise, define respectively the moving ratio $p$ and the set of elements to drop as 
\[
  p = \min_{\abovebelow{(i,j) \in \mathcal{S}_k \backslash \mathcal{S}_1:}{w_{ij} \le 0}} \frac{w_{ij}^*}{w_{ij}^* - w_{ij}},\; \mathcal{S}_- = \biggl\{(i,j): \; (i,j) \in \mathcal{S}_k \backslash \mathcal{S}_1, \, w_{ij} \le 0, \, \frac{w_{ij}^*}{w_{ij}^* - w_{ij}} =  p\biggr\},
\] 
set $\mathcal{S}_k := \mathcal{S}_k \backslash \mathcal{S}_-$, overwrite $\bw^*$ by $\bw^* := (1-p)\bw^* + p\bw^{(k)}$ and go to \textbf{Step~3}. \\
  \textbf{Step 5}: &  \textbf{Derivative evaluation}: for every $(i,j) \in \mathcal{S}$, compute $D_{i,j}^{(k)} = \frac{\partial \psi_n}{\partial w_{ij}}(\bw^{(k)})$.\\
  \textbf{Step 6}: &  \textbf{Working set enlargement}: write $\mathcal{S_+} = \argmax_{(i,j)\in \mathcal{S}} D_{i,j}^{(k)}$ for the enlargement set, with maximum $D^{(k)} = \max_{(i,j) \in \mathcal{S}} D_{i,j}^{(k)}$; if $D^{(k)} \le 0$ (or some other criteria are met if the EF distribution is non-Gaussian, e.g. $D^{(k)} < \epsilon_{IRLS}$ for some pre-determined small $\epsilon_{IRLS} > 0$), STOP the algorithm and go to \textbf{Step~7}; otherwise, pick any single-element subset $\mathcal{S}^*_{+} \subseteq \mathcal{S}_+$, let $\mathcal{S}_{k+1} = \mathcal{S}_k \cup \mathcal{S}^*_{+}$, set $k := k+1$ and go back to \textbf{Step~2}.\\
  \textbf{Step 7}: &  \textbf{Output}: for every $j = 1,\ldots,d$, set $\hat{f}_{n,j}(x_j) = \sum_{\{i:(i,j) \in \mathcal{S}_k\}} w^{(k)}_{ij} g_{ij}(x_j)$; take $\hat{c}_n = w_{00}^{(k)}$; finally, return SCMLE as $\hat{f}_n(\bx) = \sum_{j=1}^d \hat{f}_{n,j}(x_j) + \hat{c}_n$. \vspace{2mm}\\
\hline
\end{tabular}
\caption{Pseudo-code of the active set algorithm for computing SCMLE}
\label{Tab:codeSCMLE}
\end{table}
We outline below some implementation details: 
\begin{enumerate}[(a)]
\item \textbf{IRLS}. Step~3 solves an unrestricted GLM problem by applying iteratively reweighted least squares (IRLS).  Since the canonical link function is used here, IRLS is simply the Newton--Raphson method. If the EF distribution is Gaussian, then IRLS gives the exact solution of the problem in just one iteration. Otherwise, there is no closed-form expression for the solution, so a threshold $\epsilon_{IRLS}$ has to be picked to serve as part of the stopping criterion. Note that here IRLS can be replaced by other methods that solve GLM problems, though we found that IRLS offers competitive timing performance.

\item \textbf{Fast computation of the derivatives}. Although Step~5 appears at first sight to require $O(n^2 d)$ operations, it can actually be completed with only $O(nd)$ operations by exploiting some nice recurrence relations. Define the `nominal' residuals at the $k$-th iteration by 
\[
r_i^{(k)} = Y_i - \mu_i^{(k)}, \quad\mbox{ for } i = 1,\ldots,n,
\]
where $\mu_i^{(k)} = g^{-1}(f^{\bw^{(k)}}(\bX_i))$ are the fitted mean values at the $k$-th iteration.  Then
\[
\frac{\partial \psi_n}{\partial w_{ij}}(\bw^{(k)}) = \frac{1}{n} \sum_{u=1}^n r_u^{(k)} g_{ij}(X_{uj}).
\]
For simplicity, we suppress henceforth the superscript $k$. Now fix $j$ and reorder the pairs $(r_i, X_{ij})$ as $(r_{(1)}, X_{(1),j}),\ldots,(r_{(n)}, X_{(n),j})$ such that $X_{(1),j} \le \ldots \le X_{(n),j}$ (note that this is performed in Step~1).  Furthermore, define 
\begin{align*}
R_{i,j} = 
\begin{cases}
 \sum_{u=1}^{i} r_{(u)}, &\mbox{ if } l_j \in \{2,4,5,6\}, \\
 -\sum_{u=1}^{i} r_{(u)}, &\mbox{ if } l_j \in \{3,7,8,9\},\\
\end{cases}
\end{align*}
for $i = 1,\ldots,n$, where we suppress the explicit dependence of $r_{(u)}$ on $j$ in the notation.  We have $R_{n,j} = 0$ due to the presence of the intercept $w_{00}$.  The following recurrence relations can be derived by simple calculation:
\begin{itemize}
\item For $l_j \in \{2,3\}$, we have $D_{1,j} = 0$ and $n D_{i,j} = -R_{i-1,j}$ for $i = 2,\ldots,n$.
\item For $l_j \in \{4,5,7,9\}$, the initial condition is $D_{n,j} = 0$, and
\begin{align*}
n D_{i,j} =  n D_{i+1,j} - R_{i,j} \ (X_{(i+1),j} - X_{(i),j}), &\mbox{ for } i = n-1,\ldots,1.
\end{align*}
\item For $l_j \in \{6,8\}$, the initial condition is $D_{1,j} = 0$, and
\begin{align*}
n D_{i,j} = n D_{i-1,j} + R_{i-1,j} \ (X_{(i),j} - X_{(i-1),j}), &\mbox{ for } i = 2,\ldots,n.
\end{align*}
\end{itemize}
Therefore, the complexity of Step~5 in our implementation is $O(nd)$.

\item \textbf{Convergence}. If the EF distribution is Gaussian, then it follows from Theorem~1 of \citet{GJW2008} that our algorithm converges to the optimal solution after finitely many iterations.  In general, the convergence of this active set strategy depends on two aspects:
\begin{itemize}
\item \underline{Convergence of IRLS}. The convergence of Newton--Raphson method in Step~3 depends on the starting values. It is not guaranteed without step-size optimisation; cf. \citet{Jorgensen1983}.  However, starting from the second iteration, each subsequent IRLS is performed by starting from the previous well-approximated solution, which typically makes the method work well.
\item \underline{Accuracy of IRLS}. If IRLS gives the \emph{exact} solution every time, then $\psi_n(\bw^{(k)})$ increases at each iteration. In particular, one can show that at the $k$-th iteration, the new element $\mathcal{S}^*_{+}$ added into the working set in Step~6 will remain in the working set $\mathcal{S}_{k+1}$ after the $(k+1)$-th iteration. However, since IRLS only returns an approximate solution, there is no guarantee that the above-mentioned phenomenon continues to hold.  One way to resolve this issue is to reduce the tolerance $\epsilon_{IRLS}$ if $\psi_n(\bw^{(k)}) \le \psi_n(\bw^{(k-1)})$, and redo the computations for both the previous and the current iteration. 
\end{itemize}
Here we terminate our algorithm in Step~6 if either $\psi_n(\bw^{(k)})$ is non-increasing or $D^{(k)} < \epsilon_{IRLS}$.  In our numerical work, we did not encounter convergence problems, even outside the Gaussian setting.
\end{enumerate}

\subsection{Computation of SCAIE}
\label{Sec:computeSCAIE}
The computation of SCAIE can be divided into two parts:
\begin{enumerate}[1.]
\item For a given fixed $A$, find $f \in \mathrm{cl}(\mathcal{F}^{\bL_m})$ that maximises $\bar{\ell}_{n,m}\big(f; (\bA^T \bX_1,Y_1),\ldots,(\bA^T \bX_n,Y_n) \big)$ using the algorithm in Table~\ref{Tab:codeSCMLE} but with $\bA^T\bX_i$ replacing $\bX_i$.  Denote the corresponding maximum value by $\Lambda_n(\bA)$.
\item For a given lower-semicontinuous function $\Lambda_n$ on $\mathcal{A}_d^{\bL_m}$ or $\mathcal{A}_d^{\bL_m,\delta}$ as appropriate, find a maximising sequence $(\bA^k)$ in this set.
\end{enumerate}
The second part of this algorithm solves a finite-dimensional optimisation problem. Possible strategies include the differential evolution method \citep{PSL2005, DSS2011} or a stochastic search strategy \citep{DSS2013} described below. In Table~\ref{Tab:codeSCAIE}, we give the pseudo-code for computing SCAIE.  We note that Step~4 of the stochastic search algorithm is parallelisable. 
\begin{table}[!htbp]
\begin{tabular}{ l p{14cm} }
\hline 
  \textbf{Step 1}: &  \textbf{Initialisation}: let $N$ denote the total number of stochastic searches; set $k = 1$. \\
  \textbf{Step 2}: &  \textbf{Draw random matrices}: draw a $d \times m$ random matrix $\bA^k$ by initially choosing the entries to be independent and identically distributed $N(0,1)$ random variables.  For each column of $\bA^k$, if there exists a $j \in \{1,\ldots,m\}$ such that $l_j=1$, subtract its projection to the $j$-th column of $\bA^k$ so that \textbf{(B.1d)} is satisfied, then normalise each column so \textbf{(B.1b)} and \textbf{(B.1c)} are satisfied. \\
  \textbf{Step 3}: &  \textbf{Rejection sampling}: if $\bL_m \notin \mathcal{L}_m$ and $\lambda_{\mathrm{min}}((\bA^k)^T \bA^k) < \delta$, then go back to \textbf{Step~2}; otherwise, if $k < N$, set $k := k + 1$ and go to \textbf{Step 2}.\\
  \textbf{Step 4}: &  \textbf{Evaluation of $\Lambda_n$}: for every $k = 1,\ldots, N$, compute $\Lambda_n(\bA^k)$ using the active set algorithm described in Table~\ref{Tab:codeSCMLE}. \\
  \textbf{Step 5}: &  \textbf{Index matrix estimation - 1}: let $\bA^* \in \argmax_{1 \le k \le N} \Lambda_n(\bA^k)$; set $\tilde{\bA}_n = \bA^*$; \\
  \textbf{Step 6}: &  \textbf{Index matrix estimation - 2 (optional)}: treat $\bA^*$ as a warm-start and apply another optimisation strategy to find $\bA^{**}$ in a neighbourhood of $\bA^*$ such that $\Lambda_n(\bA^{**}) > \Lambda_n(\bA^{*})$; if such $\bA^{**}$ can be found,  set $\tilde{\bA}_n = \bA^{**}$.\\ 
  \textbf{Step 7}: &  \textbf{Output}: use the active set algorithm described in Table~\ref{Tab:codeSCMLE} to find 
\[
\tilde{f}_n \in \argmax_{f \in \mathrm{cl}(\mathcal{F}^{\bL_m})} \bar{\ell}_{n,m}\big(f; (\tilde{\bA}_n^T \bX_1,Y_1),\ldots,(\tilde{\bA}_n^T \bX_n,Y_n) \big);
\] 
finally, output SCAIE as $\tilde{f}_n^I(\bx) = \tilde{f}_n(\tilde{\bA}_n^T \bx)$. \vspace{2mm}\\
\hline
\end{tabular}
\caption{Pseudo-code of the stochastic search algorithm for computing SCAIE}
\label{Tab:codeSCAIE}
\end{table}

\section{Simulation study}
\label{Sec:sim}

To analyse the empirical performance of SCMLE and SCAIE, we ran a simulation study focusing on the running time and the predictive performance. Throughout this section, we took $\epsilon_{IRLS} = 10^{-8}$.

\subsection{Generalised additive models with shape restrictions}
\label{Sec:simSCMLE}
We took $\bX_1,\ldots,\bX_n \stackrel{\mathrm{i.i.d.}}{\sim} U[-1,1]^d$. The following three problems were considered:

\begin{enumerate}[1.]
\setlength{\itemsep}{0pt}
\setlength{\parskip}{0pt}
\setlength{\parsep}{0pt}
\item Here $d=4$. We set $\bL_4=(4,4,4,4)^T$ and $f_0(\bx)  = |x_1| + |x_2| + |x_3|^3 + |x_4|^3$.

\item Here $d=4$. We set $\bL_4=(5,5,5,5)^T$ and 
\[
	f_0(\bx)  = x_1 \mathbbm{1}_{\{x_1 \ge 0\}}  +  x_2 \mathbbm{1}_{\{x_2 \ge 0\}}  +  x_3^3 \mathbbm{1}_{\{x_3 \ge 0\}}  +  x_4^3 \mathbbm{1}_{\{x_4 \ge 0\}}.
\] 

\item Here $d=8$. We set $\bL_8=(4,4,4,4,5,5,5,5)^T$ and 
\[
f_0(\bx)  = |x_1|  + |x_2|  + |x_3|^3  + |x_4|^3 + x_5 \mathbbm{1}_{\{x_5 \ge 0\}} + x_6 \mathbbm{1}_{\{x_6 \ge 0\}} + x_7^3 \mathbbm{1}_{\{x_7 \ge 0\}} + x_8^3 \mathbbm{1}_{\{x_8 \ge 0\}}.
\]
\end{enumerate}

For each of these three problems, we considered three types of EF distributions: 
\begin{itemize}
\setlength{\itemsep}{0pt}
\setlength{\parskip}{0pt}
\setlength{\parsep}{0pt}
\item \underline{Gaussian}: for $i = 1, \ldots, n$, conditional on $\bX_i$, draw independently $Y_i \sim N(f_0(\bX_i),0.5^2)$;
\item \underline{Poisson}:  for $i = 1, \ldots, n$, conditional on $\bX_i$, draw independently $Y_i \sim \mathrm{Pois} \big(g^{-1}(f_0(\bX_i)) \big)$, where $g(\mu) = \log \mu$; 
\item \underline{Binomial}: for $i = 1, \ldots, n$, draw $N_i$ (independently of $\bX_1,\ldots,\bX_n$) from a uniform distribution on $\{11, 12, \ldots, 20\}$, and then draw independently $Y_i \sim N_i^{-1}\mathrm{Bin}\big(N_i, g^{-1}(f(\bX_i))\big)$, where $g(\mu) = \log \frac{\mu}{1-\mu}$.
\end{itemize}
Note that all of the component functions are convex, so $f_0$ is convex.  This allows us to compare our method with other shape restricted methods in the Gaussian setting.  Problem 3 represents a more challenging (higher-dimensional) problem.  In the Gaussian setting, we compared the performance of SCMLE with Shape Constrained Additive Models (SCAM) \citep{PyaWood2014}, Generalised Additive Models with Integrated Smoothness estimation (GAMIS) \citep{Wood2004}, Multivariate Adaptive Regression Splines with maximum interaction degree equal to one (MARS) \citep{Friedman1991}, regression trees \citep{BFOS1984}, Convex Adaptive Partitioning (CAP) \citep{HannahDunson2013}, and  Multivariate Convex Regression (MCR) \citep{LimGlynn2012,SeijoSen2011}. Some of the above-mentioned methods are not suitable to deal with non-identity link functions, so in the Poisson and Binomial settings, we only compared SCMLE with SCAM and GAMIS.  

SCAM can be viewed as a shape-restricted version of GAMIS. It is implemented in the \texttt{R} package \texttt{scam} \citep{Pya2012}. GAMIS is implemented in the \texttt{R} package \texttt{mgcv} \citep{Wood2012}, while MARS can be founded in the \texttt{R} package \texttt{mda} \citep{HTLHR2011}. The method of regression trees is implemented in the \texttt{R} package \texttt{tree} \citep{Ripley2012}, and CAP is implemented in MATLAB by \citet{HannahDunson2013}. We implemented MCR in MATLAB using the \texttt{interior-point-convex} solver. Default settings were used for all of the competitors mentioned above.

For different sample sizes $n$ = 200, 500, 1000, 2000, 5000, we ran all the methods on 50 randomly generated datasets. Our numerical experiments were carried out on standard 32-bit desktops with 1.8~GHz CPUs. Each method was given at most one hour per dataset. Beyond this limit, the run was forced to stop and the corresponding results were omitted. Tables~\ref{Tab:simtimeG} and~\ref{Tab:simtimePB} in the online supplementary material provide the average running time of different methods per training dataset. Unsurprisingly SCMLE is slower than Tree or MARS, particularly in the higher-dimensional setting.  On the other hand, it is typically faster than other shape-constrained methods such as SCAM and MCR.  Note that MCR is particularly slow compared to the other methods, and becomes computationally infeasible for $n \ge 1000$.

\begin{table}[!p]\small
  \centering
  \begin{tabular}{|c|c|c|c|c|c|}
  \multicolumn{6}{c}{\normalsize Estimated MISEs: Gaussian} \\ 
  \multicolumn{6}{c}{Problem 1} \\ \hline
  Method & $n=200$ & $n=500$ & $n=1000$ & $n=2000$ & $n=5000$ \\ \hline
  SCMLE   & 0.413 & \textbf{0.167} & \textbf{0.085} & \textbf{0.044} & \textbf{0.021}  \\
  SCAM    & \textbf{0.406} & 0.247 & 0.162 & 0.133 & 0.079  \\
  GAMIS   & 0.412 & 0.177 & 0.095 & 0.049 & 0.024  \\
  MARS    & 0.538 & 0.249 & 0.135 & 0.087 & 0.044  \\
  Tree    & 3.692 & 2.805 & 2.488 & 2.345 & 2.342  \\
  CAP     & 3.227 & 1.689 & 0.912 & 0.545 & 0.280  \\
  MCR     & 203.675 & 8415.607 & - & - & -  \\\hline
  \multicolumn{6}{c}{Problem 2} \\ \hline
  Method & $n=200$ & $n=500$ & $n=1000$ & $n=2000$ & $n=5000$ \\ \hline
  SCMLE   & 0.273 & \textbf{0.101} & \textbf{0.053} & \textbf{0.028} & \textbf{0.012} \\
  SCAM    & \textbf{0.264} & 0.107 & 0.058 & 0.032 & 0.016 \\
  GAMIS   & 0.363 & 0.154 & 0.079 & 0.041 & 0.019 \\
  MARS    & 0.417 & 0.177 & 0.087 & 0.050 & 0.021 \\
  Tree    & 1.995 & 1.277 & 1.108 & 1.015 & 0.973 \\
  CAP     & 1.282 & 0.742 & 0.415 & 0.251 & 0.145 \\
  MCR     & 940.002 & 14557.570 & - & - & -  \\\hline
  \multicolumn{6}{c}{Problem 3} \\ \hline
  Method & $n=200$ & $n=500$ & $n=1000$ & $n=2000$ & $n=5000$ \\ \hline
  SCMLE   & 11.023 & \textbf{3.825} & \textbf{2.096} & \textbf{1.107} & \textbf{0.479} \\
  SCAM    & \textbf{9.258} & 4.931 & 3.659 & 2.730 & 2.409 \\
  GAMIS   & 11.409  & 4.578  & 2.498  & 1.398  & 0.630 \\
  MARS    & 14.618  & 6.614  & 4.940  & 3.580  & 3.056 \\
  Tree    & 120.310 & 94.109 & 87.118 & 80.846 & 80.388 \\
  CAP     & 92.846  & 72.308 & 50.964 & 38.615 & 29.577 \\
  MCR     & 107.547 & 1535.022 & - & - & -  \\\hline
  \end{tabular}
  \caption{Estimated MISEs in the Gaussian setting for Problems~1, 2 and 3. The lowest MISE values are in bold font.} 
  \label{Tab:simmiseG}
\end{table}

\begin{table}[!p]\small
  \centering
  \begin{tabular}{|c|c|c|c|c|c|c|}
  \multicolumn{7}{c}{\normalsize Estimated MISEs: Poisson and Binomial} \\ 
  \multicolumn{7}{c}{Problem 1} \\ \hline
  Model & Method & $n=200$ & $n=500$ & $n=1000$ & $n=2000$ & $n=5000$ \\ \hline
           & SCMLE   & 0.344 & \textbf{0.131} & \textbf{0.067} & \textbf{0.038} & \textbf{0.017} \\
  Poisson  & SCAM    & 0.342 & 0.211 & 0.135 & 0.106 & 0.069 \\
           & GAMIS   & \textbf{0.330} & 0.142 & 0.078 & 0.043 & 0.021 \\\hline
           & SCMLE   & 0.933 & \textbf{0.282} & \textbf{0.146} & \textbf{0.079} & \textbf{0.037} \\
  Binomial & SCAM    & \textbf{0.500} & 0.324 & 0.271 & 0.241 & 0.222 \\
           & GAMIS   & 0.639 & 0.284 & 0.153 & 0.085 & 0.040 \\\hline
  \multicolumn{7}{c}{Problem 2} \\ \hline
  Model & Method & $n=200$ & $n=500$ & $n=1000$ & $n=2000$ & $n=5000$ \\ \hline
           & SCMLE   & 0.439 & \textbf{0.139} & \textbf{0.079} & \textbf{0.042} & \textbf{0.019}  \\
  Poisson  & SCAM    & \textbf{0.384} & 0.184 & 0.092 & 0.047 & 0.024  \\
           & GAMIS   & 0.505 & 0.210 & 0.121 & 0.064 & 0.030  \\\hline
           & SCMLE   & \textbf{0.357} & \textbf{0.132} & \textbf{0.065} & \textbf{0.036} & \textbf{0.016}  \\
  Binomial & SCAM    & 0.453 & 0.227 & 0.137 & 0.072 & 0.025  \\
           & GAMIS   & 0.449 & 0.173 & 0.090 & 0.054 & 0.024  \\\hline
  \multicolumn{7}{c}{Problem 3} \\ \hline
  Model & Method & $n=200$ & $n=500$ & $n=1000$ & $n=2000$ & $n=5000$ \\ \hline
           & SCMLE   & \textbf{4.399} & \textbf{1.509} &  \textbf{0.748} & \textbf{0.408} & \textbf{0.181}  \\
  Poisson  & SCAM    &  5.415 & 3.362 &  2.544 & 2.085 & 1.705  \\
           & GAMIS   &  4.698 & 1.949 &  0.981 & 0.571 & 0.275 \\\hline
           & SCMLE   & 40.614 & \textbf{11.343} &  \textbf{5.694} &  \textbf{2.973} &  \textbf{1.291}  \\
  Binomial & SCAM    & \textbf{23.801} & 16.505 & 13.992 & 12.868 & 12.207  \\
           & GAMIS   & 25.439 & 11.908 &  6.317 &  3.516 &  1.551  \\\hline
    \end{tabular}
  \caption{Estimated MISEs in the Poisson and Binomial settings for Problems~1, 2 and 3. The lowest MISE values are in bold font.}
  \label{Tab:simmisePB}
\end{table}
To study the empirical performance of SCMLE, we drew $10^5$ covariates independently from $U[-0.98,0.98]^d$ and estimated the Mean Integrated Squared Error (MISE) $\mathbb{E}\int_{[-0.98,0.98]^d}(\hat{f}_n - f_0)^2$ using Monte Carlo integration.  Estimated MISEs are given in Tables~\ref{Tab:simmiseG} and~\ref{Tab:simmisePB}.  For every setting we considered, SCMLE performs better than Tree, CAP and MCR. This is largely due to the fact that these three estimators do not take into account the additive structure.  In particular, MCR suffers severely from its boundary behaviour. It is also interesting to note that for small $n=200$, SCAM and GAMIS occasionally offer slightly better performance than SCMLE. This is also mainly caused by the boundary behaviour of SCMLE, and is alleviated as the number of observations $n$ increases.  In each of the three problems considered, SCMLE enjoys better predictive performance than the other methods for $n \ge 500$.  SCMLE appears to offer particular advantages when the true signal exhibits inhomogeneous smoothness, since it is able to regularise in a locally adaptive way, while both SCAM and GAMIS rely on a single level of regularisation throughout the covariate space.

\subsection{Generalised additive index models with shape restrictions}

In our comparisons of different estimators in GAIMs, we focused on the Gaussian case to facilitate comparisons with other methods. We took $\bX_1,\ldots,\bX_n \stackrel{\mathrm{i.i.d.}}{\sim} U[-1,1]^d$, and considered the following two problems:
\begin{enumerate}[1.]
\setlength{\itemsep}{0pt}
\setlength{\parskip}{0pt}
\setlength{\parsep}{0pt}
\setcounter{enumi}{3}
\item Here $d = 4$ and $m = 1$. We set $L_1=4$ and $ f_0^I(\bx) = |0.25 x_1 + 0.25 x_2 + 0.25 x_3 + 0.25 x_4|$.

\item Here $d = 2$ and $m = 2$. We set $\bL_2=(4,7)^T$ and $f_0^I(\bx)  = (0.5 x_1 + 0.5 x_2)^2 - |0.5 x_1 - 0.5 x_2|^3$.
\end{enumerate}
In both problems, conditional on $\bX_i$, we drew independently $Y_i \sim N(f_0^I(\bX_i),0.5^2)$ for $i = 1, \ldots, n$.  We compared the performance of our SCAIE with Projection Pursuit Regression (PPR) \citep{FriedmanStuetzle1981}, Multivariate Adaptive Regression Splines with maximum two interaction degrees (MARS) and regression trees (Tree). In addition, in Problem~4, we also considered the Semiparametric Single Index (SSI) method \citep{Ichimura1993}, CAP and MCR.  SSI was implemented in the \texttt{R} package \texttt{np} \citep{HayfieldRacine2013}.   SCAIE was computed using the algorithm illustrated in Table~\ref{Tab:codeSCAIE}. We picked the total number of stochastic searches to be $N=100$. Note that because Problem~4 is a single-index problem (i.e. $m=1$), there is no need to supply $\delta$. In Problem~5, we chose $\delta = 0.1$. We considered sample sizes $n$ = 200, 500, 1000, 2000, 5000. 

Table~\ref{Tab:simtimeindex} in the online supplementary material gives the average running time of different methods per training dataset. Although SCAIE is slower than PPR, MARS and Tree, its computation can be accomplished within a reasonable amount of time even when $n$ is as large as 5000. As SSI adopts a leave-one-out cross-validation strategy, it is typically considerably slower than SCAIE. 

Estimated MISEs of different estimators over $[-0.98,0.98]^d$ are given in Table~\ref{Tab:simmiseindex}.  In both Problems 4 and 5, we see that SCAIE outperforms its competitors for all the sample sizes we considered.  It should, of course, be noted that SSI, PPR, MARS and Tree do not enforce the shape constraints, while MARS, Tree, CAP and MCR do not take into account the additive index structure.  

\begin{table}[!p]\small
  \centering
  \begin{tabular}{|c|c|c|c|c|c|}
  \multicolumn{6}{c}{\normalsize Estimated MISEs: Additive Index Models} \\ 
  \multicolumn{6}{c}{Problem 4} \\ \hline
  Method & $n=200$ & $n=500$ & $n=1000$ & $n=2000$ & $n=5000$ \\ \hline
  SCAIE   & \textbf{0.259} & \textbf{0.074} & \textbf{0.038} & \textbf{0.019} & \textbf{0.008} \\
  SSI     & 0.878 & 0.478 & 0.313 & 0.206 &     - \\
  PPR	  & 0.679 & 0.419 & 0.277 & 0.201 & 0.151 \\
  MARS    & 0.634 & 0.440 & 0.238 & 0.179 & 0.143 \\
  Tree    & 1.903 & 0.735 & 0.425 & 0.409 & 0.406 \\
  CAP     & 0.348 & 0.137 & 0.081 & 0.056 & 0.016 \\
  MCR     & 2539.912 & 35035.710 & - & - & - \\ \hline
  \multicolumn{6}{c}{Problem 5} \\ \hline
  Method & $n=200$ & $n=500$ & $n=1000$ & $n=2000$ & $n=5000$ \\ \hline
  SCAIE   & \textbf{0.078} & \textbf{0.030} & \textbf{0.016} & \textbf{0.008} & \textbf{0.005}   \\
  PPR	  & 0.137 & 0.055 & 0.027 & 0.015 & 0.010   \\
  MARS    & 0.081 & 0.034 & 0.018 & 0.010 & 0.006   \\
  Tree    & 0.366 & 0.241 & 0.266 & 0.310 & 0.309   \\\hline
  \end{tabular}
  \caption{\small Estimated MISEs in Problems 4 and 5. The lowest MISE values are in bold font.}
  \label{Tab:simmiseindex}
\end{table}

\begin{table}[!htbp]\small
  \centering
  \begin{tabular}{|c|c|c|c|c|c|}
  \multicolumn{6}{c}{Problem 4: estimated RMSEs} \\ \hline
  Method & $n=200$ & $n=500$ & $n=1000$ & $n=2000$ & $n=5000$ \\ \hline
  SCAIE   & \textbf{0.230}  & \textbf{0.100}  & \textbf{0.056}  & \textbf{0.038} & \textbf{0.024}  \\
  SSI     & 0.677  & 0.615  & 0.595  & 0.492 & - \\
  PPR	  & 0.583  & 0.596  & 0.539  & 0.481 & 0.454\\\hline
  \multicolumn{6}{c}{Problem 5: estimated Amari distance} \\ \hline
  Method & $n=200$ & $n=500$ & $n=1000$ & $n=2000$ & $n=5000$ \\ \hline
  SCAIE   & \textbf{0.216}  & \textbf{0.135}  & \textbf{0.090}  & \textbf{0.062} &  \textbf{0.045} \\ 
  PPR     & 0.260  & 0.214  & 0.144  & 0.104 &  0.067 \\\hline
  \end{tabular}
  \caption{\small Distance between the estimated index matrix and the truth: RMSEs were estimated in Problem~4, while the mean Amari errors were estimated in Problem~5. The lowest distances are in bold font.}
  \label{Tab:simindexdist}
\end{table}
In the index setting, it is also of interest to compare the performance of those methods that directly estimate the index matrix.  We therefore estimated Root Mean Squared Errors (RMSEs), given by $\sqrt{\mathbb{E} \| \tilde{\balpha}_{n,1} - \balpha_{0,1} \|_2^2}$ in Problem 4, where $\balpha_{0,1} = (0.25,0.25,0.25,0.25)^T$.  For Problem~5, we estimated mean errors in Amari distance $\rho$, defined by \citet{ACY1996} as 
\[
\rho(\tilde{\bA}_n,\bA_0) = \frac{1}{2d}\sum_{i=1}^{d} \bigg( \frac{\sum_{j=1}^d |C_{ij}|}{ \max_{1\le j \le d} |C_{ij}|} - 1 \bigg) + \frac{1}{2d}\sum_{j=1}^{d} \bigg( \frac{\sum_{i=1}^d |C_{ij}|}{ \max_{1\le i \le d} |C_{ij}|} - 1 \bigg),
\]
where $C_{ij} = (\tilde{\bA}_n \bA_0^{-1})_{ij}$ and $\bA_0 = \begin{pmatrix} 0.5 & 0.5 \\ 0.5 &-0.5 \end{pmatrix}$.  This distance measure is invariant to permutation and takes values in $[0,d-1]$.  Results obtained for SCAIE and, where applicable, SSI and PPR, are displayed in Table~\ref{Tab:simindexdist}.   For both problems, SCAIE performs better in these senses than both SSI and PPR in terms of estimating the projection indices.

\section{Real data examples}
\label{Sec:real}

In this section, we apply our estimators in two real data examples. In the first, we study doctoral publications in biochemistry and fit a generalised (Poisson) additive model with concavity constraints; while in the second, we use an additive index model with monotonicity constraints to study javelin performance in the decathlon.

\subsection{Doctoral publications in biochemistry}
\label{Sec:publication}
The scientific productivity of a doctoral student may depend on many factors, including some or all of the number of young children they have, the productivity of the supervisor, their gender and marital status. \citet{Long1990} studied this topic focusing on the gender difference; see also \citet{Long1997}.  The dataset is available in the \texttt{R} package \texttt{AER} \citep{KleiberZeileis2013}, and contains $n=915$ observations.  Here we model the number of articles written by the $i$-th PhD student in the last three years of their PhD as a Poisson random variable with mean $\mu_i$, where 
\[
 \log \mu_i =  f_1(\texttt{kids}_i) + f_2(\texttt{mentor}_i) + a_3 \, \texttt{gender}_i + a_4 \, \texttt{married}_i + c,  
\]
for $i = 1,\ldots,n$, where $\texttt{kids}_i$ and $\texttt{mentor}_i$ are respectively the number of that student's children that are less than 6 years old, and the number of papers published by that student's supervisor during the same period of time.  Both $\texttt{gender}_i$ and $\texttt{married}_i$ are factors taking values 0 and 1, where 1 indicates `female' and `married' respectively.  In the original dataset, there is an extra continuous variable that measures the prestige of the graduate program.  We chose to drop this variable in our example because: (i) its values were determined quite subjectively; and (ii) including this variable does not seem to improve the predictive power in the above settings.

To apply SCMLE, we assume that $f_1$ is a concave and monotone decreasing function, while $f_2$ is a concave function.  The main estimates obtained from SCMLE are summarised in Table~\ref{Tab:publication1} and Figure~\ref{Fig:publication}.  Outputs from SCAM and GAMIS are also reported for comparison.  We see that with the exception of $\hat{f}_{n,2}$, estimates obtained from these methods are relatively close.  Note that in Figure~\ref{Fig:publication}, the GAMIS estimate of $f_2$ displays local fluctuations that might be harder to interpret than the estimates obtained using SCMLE and SCAM. 
\begin{table}[!htbp]\small
  \centering
  \begin{tabular}{|c|cccc||c c |} \hline
  Method & $\hat{f}_{n,1}(0)$ & $\hat{f}_{n,1}(1)$ & $\hat{f}_{n,1}(2)$ & $\hat{f}_{n,1}(3)$ & $\hat{a}_{n,3}$ & $\hat{a}_{n,4}$ \\ \hline
  SCMLE  & 0 & -0.110 & -0.284 & -0.816 & -0.218 & 0.126 \\
  SCAM   & 0 & -0.136 & -0.303 & -0.770 & -0.224 & 0.152 \\
  GAMIS  & 0 & -0.134 & -0.301 & -0.784 & -0.226 & 0.157 \\ \hline
  \end{tabular}
  \caption{\small Estimates obtained from SCMLE, SCAM and GAMIS on the PhD publication dataset.}
  \label{Tab:publication1}
\end{table}

Finally, we examine the prediction power of the different methods via cross-validation.  Here we randomly split the dataset into training (70\%) and validation (30\%) subsets. For each split, we compute estimates using only the training set, and assess their predictive accuracy in terms of Root Mean Square Prediction Error (RMSPE) on the validation set. The reported RMSPEs in Table~\ref{Tab:publication2} are averages over 500 splits.  Our findings suggest that whilst comparable to SCAM, SCMLE offers slight improvements over GAMIS and Tree for this dataset.
\begin{figure}[!htbp]
  \centering
  \includegraphics[height=13cm,angle=270]{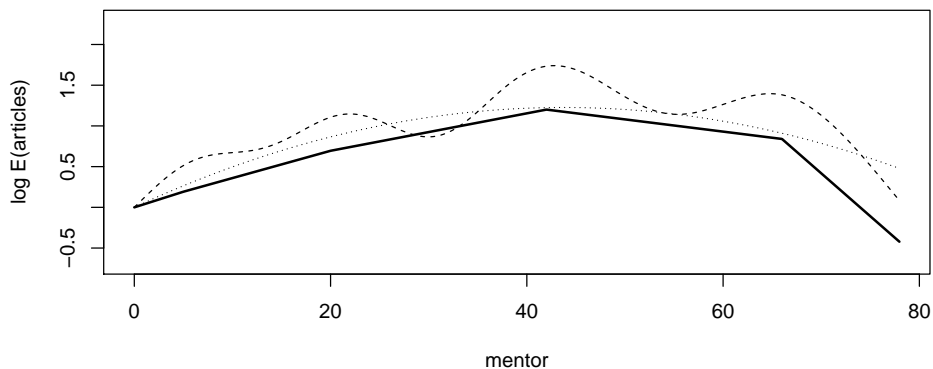}
  \caption{\small Different estimates of $f_2$: SCMLE (solid), SCAM (dotted) and GAMIS (dashed).}
  \label{Fig:publication}
\end{figure}

\begin{table}[!htbp]\small
  \centering
  \begin{tabular}{| c | c c c c|} \hline
  Method & SCMLE & SCAM & GAMIS & Tree\\ \hline
  RMSPE  & \textbf{1.822} & 1.823 & 1.838 & 1.890 \\ \hline
  \end{tabular}
  \caption{\small Estimated prediction errors of SCMLE, SCAM, GAMIS and Tree on the PhD publication dataset. The smallest RMSPE is in bold font.}
  \label{Tab:publication2}
\end{table}

\subsection{Javelin throw}
\label{Sec:Javelin}
In this section, we consider the problem of predicting a decathlete's javelin performance from their performances in the other decathlon disciplines.  Our dataset consists of decathlon athletes who scored at least 6500 points in at least one athletic competition in 2012 and scored points in every event there.  To avoid data dependency, we include only one performance from each athlete, namely their 2012 personal best (over the whole decathlon).  The dataset, which consists of $n=614$ observations, is available in the \texttt{R} package \texttt{scar} \citep{ChenSamworth2014}. For simplicity, we only select events (apart from Javelin) that directly reflect the athlete's ability in throwing and short-distance running, namely, shot put, discus, 100 metres and 110 metres hurdles. We fit the following additive index model:
\begin{align*}
	\texttt{javelin}_i \,= \,& f_1(A_{11}\,\texttt{100m}_i + A_{21}\,\texttt{110m}_i + A_{31}\,\texttt{shot}_i + A_{41}\,\texttt{discus}_i) \\
			     & + f_2(A_{12}\,\texttt{100m}_i + A_{22}\,\texttt{110m}_i + A_{32}\,\texttt{shot}_i + A_{42}\,\texttt{discus}_i) + \epsilon_i, 
\end{align*}
for $i = 1,\ldots,614$, where $\epsilon_i \stackrel{\mathrm{i.i.d.}}{\sim} N(0,\sigma^2)$, and where $\texttt{javelin}_i$, $\texttt{100m}_i$, $\texttt{110m}_i$, $\texttt{shot}_i$ and $\texttt{discus}_i$ represent the corresponding decathlon event scores for the $i$-th athlete.   For SCAIE, we assume that both $f_1$ and $f_2$ are monotone increasing, and also assume that $A_{11},\ldots,A_{41},A_{12},\ldots,A_{42}$ are non-negative.  This slightly restricted version of SCAIE aids interpretability of the indices, and prevents `perfect-fit' phenomenon (cf. Section~\ref{Sec:gaimMLE}), so no choice of $\delta$ is required.

Table~\ref{Tab:javelin1} gives the estimated index loadings by SCAIE.  We observe that the first projection index can be interpreted as the general athleticism associated with the athlete, while the second can be viewed as a measure of throwing ability.  Note that, when using SCAIE, $\hat{A}_{n,12}$ and $\hat{A}_{n,22}$ are relatively small.  To further simplify our model, and to seek improvement in the prediction power, we therefore considered forcing these entries to be exactly zero in the optimisation steps of SCAIE.  This sparse version is denoted as $\mathrm{SCAIE}_s$. Its estimated index loadings are also reported in Table~\ref{Tab:javelin1}.

\begin{table}[!htbp]\small
  \centering
  \begin{tabular}{|l | c c c c || c c c c |} \hline
  Method & $\hat{A}_{n,11}$ & $\hat{A}_{n,21}$ & $\hat{A}_{n,31}$ & $\hat{A}_{n,41}$ & $\hat{A}_{n,12}$ & $\hat{A}_{n,22}$ & $\hat{A}_{n,32}$ & $\hat{A}_{n,42}$ \\\hline
  SCAIE			& 0.262 & 0.343 & 0.222 & 0.173 & 0.006 & 0.015 & 0.522 & 0.457\\ 
  $\mathrm{SCAIE}_s$	& 0.235 & 0.305 & 0.140 & 0.320 & 0     & 0     & 0.536 & 0.464\\ \hline
  \end{tabular}
  \caption{\small Estimated index loadings by SCAIE and $\mathrm{SCAIE}_s$}
  \label{Tab:javelin1}
\end{table}

To compare the performance of our methods with PPR, MARS with maximum two degrees of interaction and Tree, we again estimated the prediction power (in terms of RMSPE) via 500 repetitions of 70\%/30\% random splits into training/test sets. The corresponding RMSPEs are reported in Table~\ref{Tab:javelin2}. We see that both SCAIE and $\mathrm{SCAIE}_s$ outperform their competitors in this particular dataset. It is also interesting to note that $\mathrm{SCAIE}_s$ has a slightly lower RMSPE than SCAIE, suggesting that the simpler (sparser) model might be preferred for prediction here. 

\begin{table}[!htbp]\small
  \centering
  \begin{tabular}{|c | c c c c c|} \hline
  Method & SCAIE & $\mathrm{SCAIE}_s$ & PPR & MARS & Tree\\ \hline
  RMSPE  & 81.276 & \textbf{80.976} & 82.898 & 82.915 & 85.085 \\ \hline
  \end{tabular}
  \caption{\small Estimated prediction errors of SCAIE, $\mathrm{SCAIE}_s$, PPR, MARS and Tree on the decathlon dataset. The smallest RMSPE is in bold font.}
  \label{Tab:javelin2}
\end{table}

\section*{Acknowledgements} We would like to thank Mary Meyer for providing us with her manuscript \citet{Meyer2013a} prior to its publication.  Both authors are supported by the second author's Engineering and Physical Sciences Research Fellowship EP/J017213/1.

\section{Appendix}
\begin{prooftitle}{of Proposition~\ref{Prop:gamexistunique}}
Define the set
\[
	\Theta = \{\boldsymbol{\eta} = (\eta_1,\ldots,\eta_n)^T \in \bar{\mathbb{R}}^n \ | \ \exists f \in \mathrm{cl}(\mathcal{F}^{\bL_d}) \mbox{ s.t. } \eta_i = f(\bX_i), \forall i = 1,\ldots,n\}.
\]
We can rewrite the optimisation problem as finding $\hat{\boldsymbol{\eta}}_n$ such that 
\[
	\hat{\boldsymbol{\eta}}_n \in \argmax_{\boldsymbol{\eta} \in \Theta} \bar{\ell}_n(\boldsymbol{\eta}),
\]
where $\bar{\ell}_n(\boldsymbol{\eta})= \frac{1}{n} \sum_{i=1}^n \ell_i(\eta_i)$, and where
\[
\ell_i(\eta_i) = \begin{cases}
  Y_i \eta_i - B(\eta_i), 
  &  \text{if } \eta_i \in \mathrm{dom}(B); \\
  \lim_{a \rightarrow -\infty}Y_i a - B(a), & \text{if } \eta_i = -\infty; \\
  \lim_{a \rightarrow \infty} Y_i a - B(a), & \text{if the EF is Gaussian, Poisson or Binomial, and } \eta_i = \infty; \\
  -\infty, & \text{if the EF is Gamma and } \eta_i \in [0,\infty].
\end{cases}
\]
%Here the algebraic expression of $B$ for the specific EF distribution is given in Table~\ref{Tab:glmformula}. 
Note that $\bar{\ell}_n$ is continuous on the non-empty set $\Theta$ and $\sup_{\boldsymbol{\eta} \in \Theta} \bar{\ell}_n(\boldsymbol{\eta})$ is finite.  Moreover, by Lemma~\ref{Lem:Closed} in the online supplementary material, $\Theta$ is a closed subset of the compact set $\bar{\mathbb{R}}^n$, so is compact.  It follows that $\bar{\ell}_n$ attains its maximum on $\Theta$, so $\hat{S}_n^{\bL_d} \neq \emptyset$.

To show the uniqueness of $\hat{\boldsymbol{\eta}}_n$, we now suppose that both $\boldsymbol{\eta}_1 = (\eta_{11},\ldots,\eta_{1n})^T$ and $\boldsymbol{\eta}_2 = (\eta_{21},\ldots,\eta_{2n})^T$ maximise $\bar{\ell}_n$.  The only way we can have $\eta_{1i} = \infty$ is if the family is Binomial and $Y_i = 1$.  But then $\ell_i(-\infty) = -\infty$, so we cannot have $\eta_{2i} = -\infty$.  It follows that $\boldsymbol{\eta}_* = (\boldsymbol{\eta}_1 + \boldsymbol{\eta}_2 )/2$ is well-defined, and $\boldsymbol{\eta}_* \in \Theta$, since $\Theta$ is convex.  Now we can use the strict concavity of $\bar{l}_n$ on its domain to conclude that $\boldsymbol{\eta}_1 = \boldsymbol{\eta}_2 = \boldsymbol{\eta}_*$.
\end{prooftitle}

To prove Theorem~\ref{Thm:SCMLEconsistency}, we require the following lemma, which says (roughly) that if any of the additive components (or the intercept) of $f \in \mathcal{F}^{\bL_d}$ are large somewhere, then there is a non-trivial region on which either $f$ is large, or a region on which $-f$ is large.
\begin{lem}
\label{Lem:AddFuncBound}
Fix $a > 0$.  There exists a finite collection $\mathcal{C}_a$ of disjoint compact subsets of $[-2a,2a]^d$ each having Lebesgue measure at least $\bigl(\frac{a}{2d}\bigr)^d$, such that for any $f \stackrel{\mathcal{F}^{\bL_d}}{\sim} (f_1,\ldots,f_d,c)$,
\[
\max_{C \in \mathcal{C}_a} \max\Bigl\{\inf_{\bx \in C} f(\bx),\inf_{\bx \in C} -f(\bx)\Bigr\} \geq \frac{1}{4}\max \biggl\{\sup_{|x_1|\le a} |f_1(x_1)|, \ldots, \sup_{|x_d|\le a} |f_d(x_d)|, 2 |c|\biggr\}.
\]
\end{lem}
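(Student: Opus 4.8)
The plan is to exhibit $\mathcal{C}_a$ explicitly as a finite family of axis-parallel boxes $C = K_1\times\cdots\times K_d$, where, for each coordinate $j$, $K_j$ is one of the five intervals in the fixed partition
\[
[-2a,-a],\quad [-a,-\tfrac{a}{4d}],\quad [-\tfrac{a}{4d},\tfrac{a}{4d}],\quad [\tfrac{a}{4d},a],\quad [a,2a]
\]
of $[-2a,2a]$ (each of length at least $\frac{a}{2d}$, so every such box has Lebesgue measure at least $(\frac{a}{2d})^d$). The heart of the argument is a coordinate-by-coordinate analysis of a shape-constrained $g:\mathbb{R}\to\mathbb{R}$ with $g(0)=0$, writing $M:=\sup_{|t|\le a}|g(t)|$.

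\textbf{One-dimensional building blocks.} I would first record two facts. \emph{(i) A ``large interval''}: there is a sign $\sigma\in\{+,-\}$ and a choice $L\in\{[a,2a],[-2a,-a]\}$, determined by the sign pattern of $g$ at $\pm a$, with $\sigma g \ge M$ on $L$. For monotone $g$ (labels $2,3,8,9$) this is immediate once $\sigma g$ attains $M$ at the relevant endpoint, since $\sigma g$ is increasing on $L$; for convex/concave $g$ (labels $4,7$) it follows from monotonicity of one-sided derivatives, after the observation that for a concave $g$ a positive excursion of height $h$ at some $t^*\neq 0$ forces $g(-t^*)\le -h$ (hence $M$ is always realised ``on the negative side'', so $\sigma=-$), and symmetrically for convex $g$. \emph{(ii) A ``safe interval''}: for each $\sigma$ there is a member $S^\sigma$ of the partition above (one of the middle three pieces) with $\sigma g \ge -\frac{M}{4d}$ on $S^\sigma$; indeed for every shape except the ``wrong-signed'' one ($\sigma=-$ with $g$ a general convex, or $\sigma=+$ with $g$ a general concave) one may take $S^\sigma=[\frac{a}{4d},a]$ or $[-a,-\frac{a}{4d}]$ with $\sigma g\ge g(0)=0$ there, while in the exceptional case the symmetric chord bound $|g(t)|\le M|t|/a$ near the origin gives $\sigma g\ge -\frac{M}{4d}$ on $S^\sigma=[-\frac{a}{4d},\frac{a}{4d}]$.

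\textbf{Assembly.} Given $f\stackrel{\mathcal{F}^{\bL_d}}{\sim}(f_1,\dots,f_d,c)$, set $M_j=\sup_{|t|\le a}|f_j|$ and $M^*=\max\{M_1,\dots,M_d,2|c|\}$; assume $M^*>0$, the case $M^*=0$ being immediate. If $M^*=M_{j_0}$ for some $j_0$, take $K_{j_0}$ to be the large interval for $f_{j_0}$ (so $\sigma f_{j_0}\ge M^*$ on $K_{j_0}$) and $K_j=S^\sigma_j$ the safe interval for $j\neq j_0$; then, since $f$ is separable over the product box $C$,
\[
\inf_{\bx\in C}\sigma f(\bx)=\sigma c+\inf_{K_{j_0}}\sigma f_{j_0}+\sum_{j\neq j_0}\inf_{K_j}\sigma f_j\ \ge\ -\tfrac{M^*}{2}+M^*-(d-1)\tfrac{M^*}{4d}\ >\ \tfrac{M^*}{4}.
\]
If instead $M^*=2|c|$, take the box with every $K_j$ a safe interval for $\sigma=\mathrm{sign}(c)$, which gives $\inf_C\sigma f\ge -d\cdot\frac{M^*}{4d}+\frac{M^*}{2}=\frac{M^*}{4}$. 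Only finitely many boxes arise (the per-coordinate choices are finite), and any two distinct such boxes use different partition pieces in at least one coordinate, hence are disjoint — so the resulting family is a legitimate $\mathcal{C}_a$.

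\textbf{Main obstacle.} The only delicate point is the ``general convex'' (label $4$) and ``general concave'' (label $7$) components, for which there is no a priori one-sided control of $f_j$ near the origin in the wrong direction. The resolution has two halves, both used above: such a component can never be the bottleneck coordinate $j_0$ in the wrong direction, because a large wrong-sided excursion forces an equally large right-sided one (the subgradient-monotonicity observation), so $j_0$ is always handled by a full-length interval outside $[-a,a]$; and in its ``safe'' role the symmetric chord estimate still pins $|f_j|$ down to $O(M_j/d)$ on an interval of length exactly $\frac{a}{2d}$ about the origin — just enough to keep the measure bound $(\frac{a}{2d})^d$ while forfeiting only $O(M^*/4)$ in the sum over the $d-1$ off-bottleneck coordinates. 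A small amount of bookkeeping (deduplicating boxes, and using the half-open versions of the partition pieces so the overlaps on their shared endpoints vanish) yields the genuine pairwise disjointness demanded by the statement.
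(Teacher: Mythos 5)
Your argument is correct and follows essentially the same route as the paper's proof: a fixed per-coordinate partition of $[-2a,2a]$ into intervals of length at least $a/(2d)$, a ``large'' outer interval for the coordinate (or intercept) achieving the maximum, and ``safe'' intervals costing at most $M^*/(4d)$ each for the remaining coordinates — via monotonicity for the monotone labels and the symmetric chord bound near the origin for the convex/concave ones — assembled into product boxes with the same constants $1/4$ and $(a/(2d))^d$. The one blemish is the disjointness fix at the end: half-open pieces are not compact, as the lemma requires; instead shrink the four non-middle pieces slightly (each has length strictly greater than $a/(2d)$, and the large/safe bounds only improve on subsets), or, as the paper does, use only the two outer intervals for the monotone safe roles and the middle interval $[-a/(4d),a/(4d)]$ for labels $1,4,7$, so that the per-coordinate pieces — and hence the boxes — are already pairwise disjoint.
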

\begin{proof}
Let $\max \bigl\{\sup_{|x_1|\le a} |f_1(x_1)|, \ldots, \sup_{|x_d|\le a} |f_d(x_d)|, 2 |c|\bigr \} = M$ for some $M \ge 0$.  Recalling that $f_1(0) = \ldots = f_d(0) = 0$, and owing to the shape restrictions, this is equivalent to
\[
	\max \bigl\{|f_1(-a)|, |f_1(a)|, \ldots, |f_d(-a)|, |f_d(a)|, 2 |c|\bigr \} = M.
\]
We will prove the lemma by construction. For $j = 1,\ldots,d$, consider the collection of intervals
\begin{align*}
\mathcal{D}_j = \begin{cases} \Big\{[-2a,-a],[a,2a] \Big\}, & \mbox{ if } l_j \in \{2,3,5,6,8,9\}; \\ 
\Big\{[-2a,-a],[-a/(4d), a/(4d)], [a,2a]\Big\}, & \mbox{ if } l_j \in \{1,4,7\}. \end{cases}
\end{align*}
Let $\mathcal{C}_a = \bigl\{\times_{j=1}^d D_j : D_j \in \mathcal{D}_j\bigr\}$, so that $|\mathcal{C}_a| \leq 3^d$.  The two cases below validate our construction:
\begin{enumerate}[1.]
\setlength{\itemsep}{0pt}
\setlength{\parskip}{0pt}
\setlength{\parsep}{0pt}
\item $\max \left\{|f_1(-a)|, |f_1(a)|, \ldots, |f_d(-a)|, |f_d(a)| \right \} < M$. Then it must be the case that $|c| = M/2$ and, without loss of generality, we may assume $c = M/2$. For $j = 1,\ldots,d$, if $l_j \in \{2,3,5,6,8,9\}$, then due to the monotonicity and the fact that $f_j(0)=0$, either
\[
	\inf_{x_j \in [-2a,-a]} f_j(x_j) \ge 0 \ \mbox{ or } \  \inf_{x_j \in [a,2a]} f_j(x_j) \ge 0. 
\]
For $l_j \in \{1,4,7\}$, by the convexity/concavity, $\sup_{x_j \in [-a/(4d), a/(4d)]} |f_j(x_j)| \le M/(4d)$.  Hence 
\[
	\max_{C \in \mathcal{C}_a} \inf_{\bx \in C}f(\bx) \ge - dM/(4d) + M/2 = M/4.
\]
\item $\max \left\{|f_1(-a)|, |f_1(a)|, \ldots, |f_d(-a)|, |f_d(a)| \right \} = M$ and $|c| \le M/2$. Without loss of generality, we may assume that $f_1(-a) = M$. Since $f_1(0)=0$ and $|f_1(a)| \le M$, we can assume $l_1 \in \{1,3,4,6\}$. Therefore, $\inf_{x_1 \in [-2a,-a]} f_1(x_1) = M$. Let 
\[
D_j = \begin{cases} [-a/(4d),a/(4d)], & \mbox{ if } l_j \in \{1,4,7\} \\ [a,2a], & \mbox{ if } l_j \in \{2,5,8\} \\  [-2a,-a], & \mbox{ if } l_j \in \{3,6,9\} \end{cases}
\]
for $j = 2,\ldots,d$. Now for $C =[-2a,-a] \times \times_{j=2}^d D_j$, we have  
\[
\inf_{\bx \in C}f(\bx) \ge M - (d-1)M/(4d) - M/2 \ge M/4.
\]
\end{enumerate}
\end{proof}
\bigskip

\begin{prooftitle}{of Theorem~\ref{Thm:SCMLEconsistency}}
For convenience, we first present the proof of consistency in the case where the EF distribution is Binomial.  Consistency for the other EF distributions listed in Table~\ref{Tab:glmformula} can be established using essentially the same proof structure with some minor modifications. We briefly outline these changes at the end of the proof.  Our proof can be divided into five steps.

\textbf{Step 1: Lower bound for the scaled partial log-likelihood}. It follows from Assumption \textbf{(A.1)} and the strong law of large numbers that
\[
\liminf_{n \rightarrow \infty} \sup_{f \in \mathrm{cl}(\mathcal{F}^{\bL_d})} \bar{\ell}_n(f) \ge  \lim_{n \rightarrow \infty} \bar{\ell}_n(f_0) = \mathbb{E} \big\{ g^{-1}(f_0(\bX))f_0(\bX) - B(f_0(\bX)) \big\} =: \bar{L}_0
\]
almost surely. %It can be checked that $\bar{L}_0$ is finite. 

\textbf{Step 2: Bounding $|\hat{f}_n|$ on $[-a,a]^d$} for any fixed $a > 0$.  For $M > 0$, let
\begin{align}
\label{Eq:setFLaM}
\mathcal{F}_{a,M}^{\bL_d} = \bigg\{ f \stackrel{\mathcal{F}^{\bL_d}}{\sim} (f_1,\ldots,f_d,c): \max \{|f_1(-a)|, |f_1(a)|, \ldots, |f_d(-a)|, |f_d(a)|, 2 |c| \} \le M \bigg\}.
\end{align}
We will prove that there exists a deterministic constant $M = M(a) \in (0,\infty)$ such that, with probability one, we have $\hat{S}_n^{\bL_d} \subseteq \mathrm{cl}\big(\mathcal{F}_{a,M(a)}^{\bL_d}\big)$ for sufficiently large $n$.  To this end, let $\mathcal{C}_a = \{C_1,\ldots,C_N\}$ be the finite collection of compact subsets of $[-2a,2a]^d$ constructed in the proof of Lemma~\ref{Lem:AddFuncBound}, and set 
\[
	M = 4 B^{-1}\biggl(\frac{-\bar{L}_0 + 1}{\min_{1 \le k \le N , t \in \{0,1\}} \mathbb{P}\bigl(\bX \in C_{k}, Y = t\bigr)} \biggr).
\]
Note that
\begin{align}
\notag	&\limsup_{n \rightarrow \infty} \sup_{f \in \mathrm{cl}\big(\mathcal{F}^{\bL_d} \setminus \mathcal{F}_{a,M}^{\bL_d}\big)} \bar{\ell}_n(f) \\
\label{Eq:Gconsistency1}	& \le \max_{1 \le k \le N} \limsup_{n \rightarrow \infty} \sup_{f \in \mathrm{cl}\big(\mathcal{F}^{\bL_d} \backslash \mathcal{F}_{a,M}^{\bL_d}\big)} \frac{1}{n} \sum_{i = 1}^n \{ Y_i f(\bX_i)- B(f(\bX_i))\} \mathbbm{1}_{\{\bX_i \notin C_k \, \cup \, Y_i \notin \{0,1\}\}} \\
\label{Eq:Gconsistency2}	& \quad + \min_{1 \le k \le N} \limsup_{n \rightarrow \infty} \sup_{f \in \mathrm{cl}\big(\mathcal{F}^{\bL_d} \backslash \mathcal{F}_{a,M}^{\bL_d}\big)} \frac{1}{n} \sum_{i = 1}^n \{ Y_i f(\bX_i)- B(f(\bX_i))\} \mathbbm{1}_{\{\bX_i \in C_k \, \cap \, Y_i \in \{0,1\}\}}.
\end{align}
Now (\ref{Eq:Gconsistency1}) is non-positive, since $Y_i \eta - B(\eta) = Y_i \eta - \log(1+e^\eta) \leq 0$ for all $\eta \in \bar{\mathbb{R}}$ and $Y_i \in \{0,1/T,2/T,\ldots,1\}$.  We now claim that the supremum over $f \in \mathrm{cl}\big(\mathcal{F}^{\bL_d} \backslash \mathcal{F}_{a,M}^{\bL_d}\big)$ in~(\ref{Eq:Gconsistency2}) can be replaced with a supremum over $f \in \mathcal{F}^{\bL_d} \backslash \mathcal{F}_{a,M}^{\bL_d}$.  To see this, let
\[
\Theta_0 = \{\boldsymbol{\eta} = (\eta_1,\ldots,\eta_n)^T \in \bar{\mathbb{R}}^n: \exists f \in \mathrm{cl}(\mathcal{F}^{\bL_d} \setminus \mathcal{F}_{a,M}^{\bL_d}) \mbox{ s.t. } \eta_i = f(\bX_i), \forall i = 1,\ldots,n\}.
\]
Suppose that $(\boldsymbol{\eta}^m) \in \Theta_0$ is such that the corresponding $(f^m) \in \mathrm{cl}(\mathcal{F}^{\bL_d} \setminus \mathcal{F}_{a,M}^{\bL_d})$ is a maximising sequence in the sense that
\begin{align*}
\frac{1}{n} \sum_{i = 1}^n \{ Y_i f^m(\bX_i)- B(f^m(\bX_i))\} &\mathbbm{1}_{\{\bX_i \in C_k \, \cap \, Y_i \in \{0,1\}\}} \\
&\nearrow \sup_{f \in \mathrm{cl}(\mathcal{F}^{\bL_d} \setminus \mathcal{F}_{a,M}^{\bL_d})} \frac{1}{n} \sum_{i = 1}^n \{ Y_i f(\bX_i)- B(f(\bX_i))\} \mathbbm{1}_{\{\bX_i \in C_k \, \cap \, Y_i \in \{0,1\}\}}.
\end{align*}
By reducing to a subsequence if necessary, we may assume $\boldsymbol{\eta}^m \rightarrow \boldsymbol{\eta}^0$, say, as $m \rightarrow \infty$, where $ \boldsymbol{\eta}^0 = (\eta_1^0,\ldots,\eta_n^0)^T \in \bar{\mathbb{R}}^n$.  Since, for each $m \in \mathbb{N}$, we can find a sequence $(f^{m,k})_k \in \mathcal{F}^{\bL_d} \setminus \mathcal{F}_{a,M}^{\bL_d}$ such that $f^{m,k} \rightarrow f^m$ pointwise in $\bar{\mathbb{R}}$ as $k \rightarrow \infty$, it follows that we can pick $k_m \in \mathbb{N}$ such that $f^{m,k_m}(\bX_i) \rightarrow \eta_i^0$ as $m \rightarrow \infty$, for all $i=1,\ldots,n$.  Moreover, $(\eta_1,\ldots,\eta_n) \mapsto \frac{1}{n} \sum_{i = 1}^n \{ Y_i \eta_i- B(\eta_i)\} \mathbbm{1}_{\{\bX_i \in C_k \, \cap \, Y_i \in \{0,1\}\}}$ is continuous on $\bar{\mathbb{R}}^n$, and we deduce that $(f^{m,k_m}) \in \mathcal{F}^{\bL_d} \setminus \mathcal{F}_{a,M}^{\bL_d}$ is also a maximising sequence, which establishes our claim.

Recall that by Lemma~\ref{Lem:AddFuncBound}, for any $f \in \mathcal{F}^{\bL_d} \backslash \mathcal{F}_{a,M}^{\bL_d}$, we can always find $C_{k^*} \in \mathcal{C}_a$ such that 
\[
\max\Bigl\{\inf_{\bx \in C_{k^*}} f(\bx),\inf_{\bx \in C_{k^*}} -f(\bx)\Bigr\} \ge M/4.
\]
Combining the non-positivity of~(\ref{Eq:Gconsistency1}) and our argument above removing the closure in~(\ref{Eq:Gconsistency2}), we deduce by the strong law of large numbers that 
\begin{align*}
\limsup_{n \rightarrow \infty} &\sup_{f \in \mathrm{cl}\big(\mathcal{F}^{\bL_d} \setminus \mathcal{F}_{a,M}^{\bL_d}\big)} \bar{\ell}_n(f) \\
&\leq \max\Bigl\{-B(M/4) \mathbb{P}(\bX \in C_{k^*},Y=0), \quad \{-M/4 - B(-M/4)\}\mathbb{P}(\bX \in C_{k^*},Y=1) \Bigl\} \\
&\leq -\min_{1\le k \le N, t \in \{0,1\}} \mathbb{P}\bigl(\bX \in C_{k}, Y = t\bigr)B(M/4) = \bar{L}_0-1,
\end{align*}
where, we have used the property that $B(t) = t + B(-t)$ for the penultimate inequality, and the definition of $M$ for the final equality.  Comparing this bound with the result of Step~1, we deduce that $\hat{S}_n^{\bL_d} \cap \mathrm{cl}\big(\mathcal{F}^{\bL_d} \setminus \mathcal{F}_{a,M}^{\bL_d}\big) = \emptyset$ for sufficiently large $n$, almost surely.  But it is straightforward to check that $\mathrm{cl}(\mathcal{F}^{\bL_d}) = \mathrm{cl}(\mathcal{F}_{a,M}^{\bL_d}) \cup \mathrm{cl}(\mathcal{F}^{\bL_d} \setminus \mathcal{F}_{a,M}^{\bL_d})$, and the result follows.

\textbf{Step 3: Lipschitz constant for the convex/concave components of $\hat{f}_n$ on $[-a,a]$}. For $M_1, M_2 >0$, let
\begin{align*}
	\mathcal{F}_{a,M_1,M_2}^{\bL_d} = \bigg\{ f \stackrel{\mathcal{F}^{\bL_d}}{\sim} (f_1,\ldots,f_d,c) \in \mathcal{F}_{a,M_1}^{\bL_d}: |f_j(z_1) - f_j(z_2)| &\le M_2 |z_1 - z _2|, \forall z_1,z_2 \in [-a,a], \\
	&\forall j \mbox{ with } l_j \in \{1,4,5,6,7,8,9\}\bigg\}.
\end{align*}
For notational convenience, we define $W(a) = M(a) + M(a+1) + 1$.  By Lemma~\ref{Lem:Intersect} in the online supplementary material,
\[
\mathrm{cl}\big(\mathcal{F}_{a,M(a)}^{\bL_d}\big) \cap \mathrm{cl}\big(\mathcal{F}_{a+1,M(a+1)}^{\bL_d}\big)  \subseteq \mathrm{cl}\big(\mathcal{F}_{a,M(a),W(a)}^{\bL_d}\big).
\]
From this and the result of Step~2, we have that for any fixed $a > 0$, with probability one, $\hat{S}_n^{\bL_d} \subseteq \mathrm{cl}\big(\mathcal{F}_{a,M(a),W(a)}^{\bL_d}\big)$ for sufficiently large $n$.

\textbf{Step 4: Glivenko--Cantelli Classes}.

For, $a>0$, $M_1 > 0$, $M_2 > 0$ and $j=1,\ldots,d$, let 
\[
\check{\mathcal{F}}_{a,M_1,M_2}^{\bL_d} = \Big\{ \check{f} : \mathbb{R}^d \rightarrow \mathbb{R} \Big| \check{f}(\bx) = f(\bx) \mathbbm{1}_{\{\bx \in [-a,a]^d\}}, f \in \mathcal{F}_{a,M_1,M_2}^{\bL_d} \Big\}
\]
and
\[
\big(\check{\mathcal{F}}_{a,M_1,M_2}^{\bL_d}\big)_j = \Big\{\check{f} : \mathbb{R}^d \rightarrow \mathbb{R}\Big| \check{f}(\bx) = f_j(x_j)\mathbbm{1}_{\{\bx \in [-a,a]^d\}} \text{ for some } f \stackrel{\mathcal{F}^{\bL_d}}{\sim} (f_1,\ldots,f_d,c) \in \mathcal{F}_{a,M_1,M_2}^{\bL_d}\Big\}.
\]
We first claim that each $(\check{\mathcal{F}}_{a,M_1,M_2}^{\bL_d})_j$ is a $P_{\bX}$-Glivenko--Cantelli class, where $P_{\bX}$ is the distribution of $\bX$.  To see this, note that by Theorem~2.7.5 of \citet{vanderVaartWellner1996}, there exists a universal constant $C > 0$ and functions $g_k^L,g_k^U:\mathbb{R} \rightarrow [0,1]$ for $k=1,\ldots,N_1$ with $N_1 = e^{2M_1 C/\epsilon}$ such that $\mathbb{E}|g_k^U(X_{1j}) - g_k^L(X_{1j})| \leq \epsilon/(2M_1)$ and such that for every monotone function $g:\mathbb{R} \rightarrow [0,1]$, we can find $k^* \in \{1,\ldots,N_1\}$ with $g_{k^*}^L \leq g \leq g_{k^*}^U$.  By Corollary~2.7.10, the same property holds for convex or concave functions from $[-a,a]$ to $[0,1]$, provided we use $N_2$ brackets, where $N_2 = \exp\bigl\{C\bigl(1+ \frac{M_2}{2M_1}\bigr)^{1/2}(2M_1/\epsilon)^{1/2}\bigr\}$.  It follows that if $j$ corresponds to a monotone component, then the class of functions
\[
\tilde{g}_k^L(x) = 2M_1(g_k^L(x_j)-1/2)\mathbbm{1}_{\{\bx \in [-a,a]^d\}}, \quad \tilde{g}_k^U(x) = 2M_1(g_k^U(x_j)-1/2)\mathbbm{1}_{\{\bx \in [-a,a]^d\}},
\]
for $k=1,\ldots,N_1$, forms an $\epsilon$-bracketing set for $\big(\check{\mathcal{F}}_{a,M_1,M_2}^{\bL_d}\big)_j$ in the $L_1(P_{\bX})$-norm.  Similarly, if $j$ corresponds to a convex or concave component, we can define in the same way an $\epsilon$-bracketing set for $\big(\check{\mathcal{F}}_{a,M_1,M_2}^{\bL_d}\big)_j$ of cardinality $N_2$ for $\big(\check{\mathcal{F}}_{a,M_1,M_2}^{\bL_d}\big)_j$.  We deduce by Theorem~2.4.1 of \citet{vanderVaartWellner1996} that each $\big(\check{\mathcal{F}}_{a,M_1,M_2}^{\bL_d}\big)_j$ is a $P_X$-Glivenko--Cantelli class.  But then
\[
\sup_{\check{f} \in \check{\mathcal{F}}_{a,M_1,M_2}^{\bL_d}} \bigg|\frac{1}{n}\sum_{i=1}^n \check{f}(\bX_i) - \mathbb{E}\check{f}(\bX)\bigg| \le \sum_{j=1}^d\sup_{\check{f}_j \in \big(\check{\mathcal{F}}_{a,M_1,M_2}^{\bL_d}\big)_j} \bigg|\frac{1}{n} \sum_{i=1}^n \check{f}_j(X_{ij}) - \mathbb{E} \check{f}_j(X_{1j})\bigg|,
\]
so $\check{\mathcal{F}}_{a,M_1,M_2}^{\bL_d}$ is $P_{\bX}$-Glivenko--Cantelli.  We now use this fact to show that the class of functions
\[
\mathcal{H}_{a,M_1,M_2} = \Big\{ h_f: \mathbb{R}^d \times \mathbb{R} \rightarrow \mathbb{R} \;\big|\; h_f(\bx,y) = \big\{yf(\bx) - B(f(\bx))\big\}\mathbbm{1}_{\{\bx \in [-a,a]^d\}}, f \in \mathcal{F}_{a,M_1,M_2}^{\bL_d} \Big\}
\]
is $P$-Glivenko--Cantelli, where $P$ is the distribution of $(\bX,Y)$.  Define $f^*,f^{**}: \mathbb{R}^d \times \mathbb{R} \rightarrow \mathbb{R}$ by $f^*(\bx,y) =y$ and $f^{**}(\bx,y) = \mathbbm{1}_{\{\bx \in [-a,a]^d\}}$.  Let 
\[
\mathcal{F}_1 = \bigl\{f:\mathbb{R}^d \times \mathbb{R} \rightarrow \mathbb{R}\bigm| f(\bx,y) = \check{f}(\bx), \check{f} \in\check{\mathcal{F}}_{a,M_1,M_2}^{\bL_d}\bigr\},
\]
let $\mathcal{F}_2 = \{f^*\}$ and let $\mathcal{F}_3 = \{f^{**}\}$; finally define $\psi:\mathbb{R} \times \mathbb{R} \times \mathbb{R} \rightarrow \mathbb{R}$ by $\psi(u,v,w) = \{vu - B(u)\}w$.  Then $\mathcal{H} = \psi(\mathcal{F}_1,\mathcal{F}_2,\mathcal{F}_3)$, where 
\[
\psi(\mathcal{F}_1,\mathcal{F}_2,\mathcal{F}_3) = \{\psi(f_1(\bx,y),f_2(\bx,y),f_3(\bx,y)):f_1 \in \mathcal{F}_1,f_2 \in \mathcal{F}_2,f_3 \in \mathcal{F}_3\}.
\]
Now $\mathcal{F}_1, \mathcal{F}_2$ and $\mathcal{F}_3$ are $P$-Glivenko--Cantelli, $\psi$ is continuous and (recalling that $|Y| \leq 1$ in the Binomial setting), 
\[
\sup_{f_1 \in \mathcal{F}_1} \sup_{f_2 \in \mathcal{F}_2} \sup_{f_3 \in \mathcal{F}_3} |\psi(f_1(\bx,y),f_2(\bx,y),f_3(\bx,y))| \leq M_1(d+1) + B(M_1(d+1)),
\]
which is $P$-integrable.  We deduce from Theorem~3 of \citet{vanderVaartWellner2000} that $\mathcal{H}_{a,M_1,M_2}$ is $P$-Glivenko--Cantelli.

\textbf{Step 5: Almost sure convergence of $\hat{f_n}$}. For $\epsilon > 0$, let
\[
	B_{\epsilon}(f_0) = \bigg\{f:\mathbb{R}^d\rightarrow\mathbb{R} \bigg| \sup_{\bx \in [-a_0,a_0]^d} |f(\bx)-f_0(\bx)| \le \epsilon \bigg\},
\]
where we suppress the dependence of $B_{\epsilon}(f_0)$ on $a_0$ in the notation.
Our aim to show that with probability 1, we have $\hat{S}_n^{\bL_d} \cap \mathrm{cl}\big(\mathcal{F}^{\bL_d} \setminus B_{\epsilon}(f_0) \big) = \emptyset$ for sufficiently large~$n$.  In Lemma~\ref{Lem:positive} in the online supplementary material, it is established that for any $\epsilon > 0$, 
\begin{align}
\notag	\zeta(a^*): = \mathbb{E} \bigl[\big\{ Yf_0(\bX) - &B(f_0(\bX))\big\}\mathbbm{1}_{\{\bX \in [-a^*,a^*]^d\}}\bigr] \\
\label{Eq:consistency0} & - \sup_{f \in \mathcal{F}_{a_0,M(a_0),W(a_0)}^{\bL_d} \backslash B_{\epsilon}(f_0)} \mathbb{E}\bigl[\big\{ Yf(\bX) - B(f(\bX)) \big\} \mathbbm{1}_{\{\bX \in [-a^*,a^*]^d\}}\bigr]
\end{align}
is positive and a non-decreasing function of $a^* > a_0+1$.  Since we also have that (in the Binomial setting), $-\log 2 \leq g^{-1}(t)t - B(t) \leq 0$, we can therefore choose $a^* > a_0+1$ such that 
\begin{align}
\label{Eq:consistency1} \bigl|\mathbb{E} \bigl[\big\{ Y f_0(\bX) - B(f_0(\bX))\big\} \mathbbm{1}_{\{ \bX \notin [-a^*,a^*]^d\}}\bigr] \bigr| & \le \zeta(a^*)/3.
\end{align}
%Such choice is possible because the left hand sides of the above equations tend to 0 as $a^* \rightarrow \infty$.
Let 
\begin{align*}
\mathcal{F}^* = \mathrm{cl}\big(\mathcal{F}_{a_0,M(a_0)}^{\bL_d}\backslash B_{\epsilon}(f_0)\big)\; &\cap \;  \mathrm{cl}\big(\mathcal{F}_{a_0+1,M(a_0+1)}^{\bL_d} \backslash B_{\epsilon}(f_0)\big)  \\
&\cap \;  \mathrm{cl}\big(\mathcal{F}_{a^*,M(a^*)}^{\bL_d} \backslash B_{\epsilon}(f_0)\big) \; \cap \;  \mathrm{cl}\big(\mathcal{F}_{a^*+1,M(a^*+1)}^{\bL_d} \backslash B_{\epsilon}(f_0)\big).
\end{align*}
Observe that by the result of Step~2, we have that with probability one, $\hat{S}_n^{\bL_d} \subseteq \mathcal{F}^* \cup \mathrm{cl}(B_\epsilon(f_0))$ for sufficiently large $n$.  By Lemma~\ref{Lem:Intersect2} in the online supplementary material, %almost surely for sufficiently large $n$ 
\begin{align}
\notag&\bigg\{ \sup_{f \in \mathcal{F}^*} \frac{1}{n}\sum_{i=1}^n\big\{Y_i f(\bX_i) - B(f(\bX_i))\big\} \ge \bar{L}_0  -  \zeta(a^*)/3 \bigg\} \\
\label{Eq:consistency2}	&\subseteq  \Bigg\{ \sup_{f \in (\mathcal{F}_{a_0,M(a_0),W(a_0)}^{\bL_d} \cap \mathcal{F}_{a^*,M(a^*)+1,W(a^*)+1}^{\bL_d}) \backslash B_{\epsilon}(f_0)} \frac{1}{n}\sum_{i=1}^n\big\{Y_i f(\bX_i) - B(f(\bX_i))\big\} \mathbbm{1}_{\{\bX_i \in [-a^*,a^*]^d\}} \\
\notag	& \qquad \qquad \qquad + \sup_{f \in \mathcal{F}^{\bL_d}}\frac{1}{n}\sum_{i=1}^n\big\{Y_i f(\bX_i) - B(f(\bX_i))\big\} \mathbbm{1}_{\{\bX_i \notin [-a^*,a^*]^d\}} \ge \bar{L}_0 - \zeta(a^*)/3 \Bigg\}, 
\end{align}
Here the closure operator in~(\ref{Eq:consistency2}) can be dropped by the same argument as in Step~2.  Now note that 
\begin{align*}
	\Big\{ h_f: \mathbb{R}^d \times \mathbb{R} \rightarrow \mathbb{R} \;\big|\;  &h_f(\bx,y) = \big\{yf(\bx) - B(f(\bx))\big\}\mathbbm{1}_{\{\bx \in [-a^*,a^*]^d\}}, \\
&f \in (\mathcal{F}_{a_0,M(a_0),W(a_0)}^{\bL_d} \cap \mathcal{F}_{a^*,M(a^*)+1,W(a^*)+1}^{\bL_d}) \backslash B_{\epsilon}(f_0) \Big\} \subseteq \mathcal{H}_{a^*,M(a^*)+1,W(a^*)+1},
\end{align*}
so the class is $P$-Glivenko--Cantelli, by the result of Step~4.  We therefore have that with probability one, 
\begin{align}
\notag	\limsup_{ n\rightarrow \infty} &\sup_{f \in (\mathcal{F}_{a_0,M(a_0),W(a_0)}^{\bL_d} \cap \mathcal{F}_{a^*,M(a^*)+1,W(a^*)+1}^{\bL_d}) \backslash B_{\epsilon}(f_0)} \frac{1}{n}\sum_{i=1}^n\big\{Y_i f(\bX_i) - B(f(\bX_i))\big\} \mathbbm{1}_{\{\bX_i \in [-a^*,a^*]^d\}}\\
\notag  & = \sup_{f \in (\mathcal{F}_{a_0,M(a_0),W(a_0)}^{\bL_d} \cap \mathcal{F}_{a^*,M(a^*)+1,W(a^*)+1}^{\bL_d}) \backslash B_{\epsilon}(f_0)} \mathbb{E}\bigl[\big\{Y f(\bX) - B(f(\bX))\big\} \mathbbm{1}_{\{\bX \in [-a^*,a^*]^d\}}\bigr] \\
\label{Eq:consistency3}		&\leq \mathbb{E} \bigl[\big\{ Yf_0(\bX) - B(f_0(\bX))  \big\} \mathbbm{1}_{\{\bX \in [-a^*,a^*]^d\}}\bigr] - \zeta(a^*) \\
\label{Eq:consistency4}		&\le \bar{L}_0 - 2\zeta(a^*)/3, 
\end{align}
where (\ref{Eq:consistency3}) is due to (\ref{Eq:consistency0}), and where (\ref{Eq:consistency4}) is due to (\ref{Eq:consistency1}).  In addition, under the Binomial setting, for every $n \in \mathbb{N}$,
\begin{align}
\label{Eq:consistency5}
\sup_{f \in \mathcal{F}^{\bL_d}} \frac{1}{n}\sum_{i=1}^n\big\{Y_i f(\bX_i) - B(f(\bX_i))\big\} \mathbbm{1}_{\{\bX_i \notin [-a^*,a^*]^d\}} \le \frac{1}{n}\sum_{i=1}^n\sup_{t \in \mathbb{R}} \big\{ Y_i t - B(t)\big\} \mathbbm{1}_{\{\bX_i \notin [-a^*,a^*]^d\}} \le 0.
\end{align}
We deduce from~(\ref{Eq:consistency2}), (\ref{Eq:consistency4}) and (\ref{Eq:consistency5}) that with probability one, $\hat{S}_n^{\bL_d} \subseteq \mathrm{cl}\big(B_{\epsilon}(f_0) \big)$ for sufficiently large $n$. Finally, since $\mathrm{cl}\big(B_{\epsilon}(f_0) \big)|_{[-a_0,a_0]^d} = B_{\epsilon}(f_0)|_{[-a_0,a_0]^d}$, the conclusion of Theorem~\ref{Thm:SCMLEconsistency} for Binomial models follows.

\textbf{Consistency of other EF additive models}.  The proof for other EF models follows the same structure, but involves some changes in certain places.  We list the modifications required for each step here:
\begin{itemize}
\setlength{\itemsep}{0pt}
\setlength{\parskip}{0pt}
\setlength{\parsep}{0pt}
\item In Step~1, we add a term independent of $f$ to the definition of the partial log-likelihood: 
\[
\tilde{\ell}_n(f) = \frac{1}{n}\sum_{i=1}^n \biggl[Y_if(\bX_i) - B(f(\bX_i)) - \sup_{t \in \mathrm{dom}(B)} \{Y_i t - B(t)\}\biggr]. 
\]
Note that
\[
\sup_{t \in \mathrm{dom}(B)} \{Y_i t - B(t)\} = \left\{ \begin{array}{ll} Y_i^2/2 & \mbox{if EF is Gaussian;} \\
Y_i \log Y_i - Y_i & \mbox{if EF is Poisson;} \\
-1 - \log Y_i & \mbox{if EF is Gamma.} \end{array} \right.
\]
This allows us to prove that $\mathbb{E}\{\tilde{\ell}_n(f_0)\} \in (-\infty,0]$ in all cases: in particular, in the Gaussian case, $\mathbb{E}\{\tilde{\ell}_n(f_0)\} = -\phi_0/4$; for the Poisson, we can use Lemma~\ref{Lem:Pois} in the online supplementary material to see that $\mathbb{E}\{\tilde{\ell}_n(f_0)\} \in [-1,0]$; for the Gamma, this claim follows from Lemma~\ref{Lem:Gamma} in the online supplementary material. It then follows from the strong law of large numbers that almost surely
\[
\liminf_{n \rightarrow \infty} \sup_{f \in \mathrm{cl}(\mathcal{F}^{\bL_d})} \tilde{\ell}_n(f) \ge \mathbb{E}\{\tilde{\ell}_n(f_0)\} =: \tilde{L}_0.
\]

\item In Step~2,  the deterministic constant $M = M(a) \in (0, \infty)$ needs to be chosen differently for different EF distributions. Let $\mathcal{C}_a = \{C_1,\ldots,C_N\}$ be the same finite collection of compact subsets defined previously. We then can pick  
\begin{align*}
M = \left\{ \begin{array}{ll} 
4\Bigl(\sqrt{\frac{2(-\tilde{L}_0+1)}{\min_{1 \le k \le N} \mathbb{P}(\bX \in C_k, |Y| \le 1)}} + 1\Bigl) & \mbox{if EF is Gaussian;} \\
4\Bigl(\frac{-\tilde{L}_0+1}{\min_{1 \le k \le N} \mathbb{P}(\bX \in C_k, Y = 1)} + 1\Bigr) & \mbox{if EF is Poisson;} \\
4\Bigl(\frac{2(-\tilde{L}_0+1)}{\min_{1 \le k \le N} \mathbb{P}(\bX \in C_k, 1 \le Y \le e)} + 4 \Bigr) & \mbox{if EF is Gamma.} \end{array} \right.
\end{align*}

\item Step~3 are exactly the same for all the EF distributions listed in Table~\ref{Tab:glmformula}. 

\item In Step~4, we define the class of functions
\begin{align*}
\tilde{\mathcal{H}}_{a,M_1,M_2} &= \Big\{ h_f: \mathbb{R}^d \times \mathbb{R} \rightarrow \mathbb{R} \;\big|\;  \\ 
&h_f(\bx,y) = \Big[yf(\bx) - B(f(\bx)) -  \sup_{t \in \mathrm{dom}(B)} \{y t - B(t)\}\Big] \mathbbm{1}_{\{\bx \in [-a,a]^d\}}, f \in \mathcal{F}_{a,M_1,M_2}^{\bL_d} \Big\}.
\end{align*}
In the Gaussian case, we can rewrite $h_f(\bx,y) = -\frac{1}{2}\{y-f(\bx)\}^2\mathbbm{1}_{\{\bx \in [-a,a]^d\}}$. By taking the $P$-integrable envelope function to be 
\[
F(\bx,y) = \frac{1}{2}\{|y| + M_1(d+1)\}^2 \mathbbm{1}_{\{\bx \in  [-a,a]^d\}} \ge \sup_{f \in \mathcal{F}_{a,M_1,M_2}^{\bL_d}}|h_f(\bx,y)|,
\]
we can again deduce from Theorem~3 of \citet{vanderVaartWellner2000} that $\tilde{\mathcal{H}}_{a,M_1,M_2}$ is $P$-Glivenko--Cantelli. Similarly, in the Poisson case, we can show that $\tilde{\mathcal{H}}_{a,M_1,M_2}$ is $P$-Glivenko--Cantelli by taking the envelope function to be $F(\bx,y) = \bigl\{ y M_1(d+1) + e^{M_1(d+1)} + y + y \log y \bigr\} \mathbbm{1}_{\{\bx \in  [-a,a]^d\}}$.

The Gamma case is slightly more complex, mainly due to the fact that $\mathrm{dom}(B) \neq \mathbb{R}$. For $\delta > 0$, let
\begin{align*}
\tilde{\mathcal{H}}_{a,M_1,M_2}^{\delta} &= \Big\{h_f: \mathbb{R}^d \times \mathbb{R} \rightarrow \mathbb{R} \;\big|\;  \\ &h_f(\bx,y) = \big\{yf(\bx) + \log\bigl(\max(-f(\bx),\delta)\bigr) -  1 + \log y \big\} \mathbbm{1}_{\{\bx \in [-a,a]^d\}}, f \in \mathcal{F}_{a,M_1,M_2}^{\bL_d} \Big\}.
\end{align*}
Again, we can show that $\tilde{\mathcal{H}}_{a,M_1,M_2}^\delta$ is $P$-Glivenko--Cantelli by taking the envelope function for $\tilde{\mathcal{H}}_{a,M_1,M_2}^{\delta}$ to be $F(\bx,y) = \bigl\{ y M_1(d+1) + |\log \delta| + |\log(M_1(d+1))| + 1 + \log y \bigr\} \mathbbm{1}_{\{\bx \in  [-a,a]^d\}}$.

\item Step~5 for the Gaussian and Poisson settings are essentially a replication of that for the Binomial case. Only very minor changes are required:
\begin{enumerate}[(a)]
\setlength{\itemsep}{0pt}
\setlength{\parskip}{0pt}
\setlength{\parsep}{0pt}
\item where applicable, add the term $- \sup_{t \in \mathbb{R}} [Y t - B(t)]$  to $\{Yf_0(\bX) - B(f_0(\bX))\}$ and $\{Yf(\bX) - B(f(\bX))\}$; make the respective change to $\{Y_i f(\bX_i) - B(f(\bX_i))\}$ and $\{yf(\bx) - B(f(\bx))\}$;
\item change $\bar{L}_0$ to $\tilde{L}_0$;
\item change $\mathcal{H}_{a^*,M(a^*)+1,W(a^*)+1}$ to $\tilde{\mathcal{H}}_{a^*,M(a^*)+1,W(a^*)+1}$;
\item rewrite  (\ref{Eq:consistency5}) as 
\[
\sup_{f \in \mathcal{F}^{\bL_d}} \frac{1}{n}\sum_{i=1}^n\Big[Y_i f(\bX_i) - B(f(\bX_i)) - \sup_{t \in \mathbb{R}} \{Y_i t - B(t)\}\Big] \mathbbm{1}_{\{\bX_i \notin [-a^*,a^*]^d\}} \le 0.
\] 
\end{enumerate}
The analogue of Step~5 for the Gamma distribution is a little more involved.  Set $\delta_0 = \inf_{\bx \in [-a_0-1,a_0+1]^d} -f_0(\bx) / e^2 > 0$. Note that the above supremum is attained as $f_0$ is a continuous function. Then one can prove in a similar fashion to Lemma~\ref{Lem:positive} that
\begin{align*}
\zeta &=  \mathbb{E} \bigl[\big\{ Yf_0(\bX) + \log(-f_0(\bX)) + 1 + \log Y\big\}\mathbbm{1}_{\{\bX \in [-a_0-1,a_0+1]^d\}}\bigr] \\
& - \sup_{f \in \mathcal{F}_{a_0,M(a_0),W(a_0)}^{\bL_d} \backslash B_{\epsilon}(f_0)} \! \! \! \! \! \! \! \! \mathbb{E}\bigl[\big\{ Yf(\bX) + \log(\max(-f(\bX),\delta_0)) + 1 + \log Y \big\} \mathbbm{1}_{\{\bX \in [-a_0-1,a_0+1]^d\}}\bigr] > 0.
\end{align*}
Next we pick $a^* > a_0+1$ such that 
\begin{align*}
\bigl|\mathbb{E} \bigl[\big\{ Yf_0(\bX) + \log(-f_0(\bX)) + 1 + \log Y\big\} \mathbbm{1}_{\{ \bX \notin [-a^*,a^*]^d\}}\bigr] \bigr| & \le \zeta/3
\end{align*}
and $\delta^* = \inf_{\bx \in [-a^*-1,a^*+1]^d} -f_0(\bx) / e^2$.  Write
\[
\mathcal{F}^{**} = \Bigl(\mathcal{F}_{a_0,M(a_0),W(a_0)}^{\bL_d} \cap \mathcal{F}_{a^*,M(a^*)+1,W(a^*)+1}^{\bL_d} \Bigr) \backslash B_{\epsilon}(f_0).
\] 
With $\mathcal{F}^*$ defined as in Step 5, we have
\begin{align*}
&\bigg\{ \sup_{f \in \mathcal{F}^* } \frac{1}{n}\sum_{i=1}^n\big\{Y_i f(\bX_i) + \log(-f(\bX_i)) + 1 + \log Y_i \big\} \ge \tilde{L}_0  -  \zeta/3 \bigg\}\\
&\subseteq  \Bigg\{ \sup_{f \in \mathcal{F}^{**}} \frac{1}{n}\sum_{i=1}^n\big\{Y_i f(\bX_i) + \log(\max(-f(\bX_i),\delta_0)) + 1 + \log Y_i \big\} \mathbbm{1}_{\{\bX_i \in [-a_0-1,a_0+1]^d\}} \\
& \qquad + \sup_{f \in \mathcal{F}^{**}} \frac{1}{n}\sum_{i=1}^n\big\{Y_i f(\bX_i) + \log(\max(-f(\bX_i),\delta^*)) + 1 + \log Y_i \big\} \mathbbm{1}_{\{\bX_i \in [-a^*,a^*]^d \backslash [-a_0-1,a_0+1]^d\}}\\
& \qquad + \sup_{f \in \mathcal{F}^{\bL_d}}\frac{1}{n}\sum_{i=1}^n\big\{Y_i f(\bX_i) + \log(-f(\bX_i)) + 1 + \log Y_i\big\} \mathbbm{1}_{\{\bX_i \notin [-a^*,a^*]^d\}} \ge \tilde{L}_0 - \zeta/3 \Bigg\}. 
\end{align*}
Again we apply Glivenko--Cantelli theorem to finish the proof, where we also use the fact that
\begin{align*}
\sup_{f \in \mathcal{F}^{**}} \mathbb{E}\bigl[\big\{ Yf(\bX) &+ \log(\max(-f(\bX),\delta^*)) + 1 + \log Y \big\} \mathbbm{1}_{\{\bX \in [-a^*,a^*]^d \backslash [-a_0-1,a_0+1]^d\}} \bigr] \\
&\leq \mathbb{E} \bigl[\big\{ Yf_0(\bX) + \log(-f_0(\bX)) + 1 + \log Y\big\} \mathbbm{1}_{\{\bX \in [-a^*,a^*]^d \backslash [-a_0-1,a_0+1]^d\}}\bigr]. 
 \end{align*}
\end{itemize}
\end{prooftitle}
\bigskip

\begin{prooftitle}{of Corollary~\ref{Cor:SCMLEconsistency}}
By Theorem~\ref{Thm:SCMLEconsistency}, we have
\[
\sup_{\hat{f}_n \in \hat{S}_n^{\bL_d}}|\hat{c}_n - c_0| = \sup_{\hat{f}_n \in \hat{S}_n^{\bL_d}} |\hat{f}_n(\mathbf{0}) - f_0(\mathbf{0})| \stackrel{a.s.}{\rightarrow} 0
\]
as $n \rightarrow \infty$.  Moreover, writing $I_j = \{0\} \times \ldots \times \{0\} \times [-a_0,a_0] \times \{0\} \times \ldots \times \{0\}$, we have
\[
\sup_{\hat{f}_n \in \hat{S}_n^{\bL_d}} \sum_{j=1}^d \sup_{x_j \in [-a_0, a_0]}|\hat{f}_{n,j}(x_j) - f_{0,j}(x_j)| = \sup_{\hat{f}_n \in \hat{S}_n^{\bL_d}} \sum_{j=1}^d \sup_{\bx \in I_j} |\hat{f}_n(\bx) - f_0(\bx) - \hat{c}_n + c_0| \stackrel{a.s.}{\rightarrow} 0,
\]
using Theorem~\ref{Thm:SCMLEconsistency} again and the triangle inequality.
\end{prooftitle}
\bigskip

\begin{prooftitle}{of Proposition~\ref{Prop:SCAIEcontinuity}}
Fix an index matrix $\bA = (\balpha_1,\ldots,\balpha_m) \in \mathbb{R}^{d \times m}$. For any sequence $\bA^1, \bA^2, \ldots \in \mathbb{R}^{d \times m}$ with $\lim_{k \rightarrow \infty} \|\bA^k-\bA\|_F = 0$, where $\|\cdot\|_F$ denotes the Frobenius norm, we claim that 
$\lim_{k \rightarrow \infty} \| (\bA^k)^T \bX_i - \bA^T \bX_i \|_1 = 0$ for every $i = 1,\ldots,n$. To see this, we write $\bA^k = (\balpha_1^k, \ldots, \balpha_m^k)$. It then follows that
\begin{align*}
\| (\bA^k)^T \bX_i - \bA^T \bX_i \|_1 &= \sum_{h=1}^m \Big|((\bA^k)^T \bX_i)_h - (\bA^T \bX_i)_h \Big| = \sum_{h=1}^m \Big|\sum_{j=1}^d (A_{jh}^k - A_{jh}) X_{ij} \Big| \\
&\le \sum_{h=1}^m \bigg[ \|\bX_i\|_2 \bigg\{\sum_{j=1}^d (A_{jh}^k - A_{jh})^2 \bigg\}^{1/2} \bigg] \le  \|\bX_i\|_2 \sqrt{m} \|\bA^k-\bA\|_F \rightarrow 0
\end{align*} 
as $k \rightarrow \infty$, where we have applied the Cauchy--Schwarz inequality twice.  Now write $\bZ_i = (Z_{i1},\ldots,Z_{im})^T = \bA^T \bX_i$ for every $i=1,\ldots,n$ and take 
\[
	a^* = \max_{1 \le i \le n, 1 \le j \le m} |Z_{ij}|.
\]
Since $\bigcup_{M=1}^\infty \mathcal{F}^{\bL_m}_{a^*,M} = \mathcal{F}^{\bL_m}$ (where $\mathcal{F}^{\bL_m}_{a^*,M}$ is defined in (\ref{Eq:setFLaM})), it follows that 
\[
	\lim_{M \rightarrow \infty} \sup_{f \in \mathcal{F}^{\bL_m}_{a^*,M}} \bar{\ell}_{n,m}\big(f; (\bZ_1,Y_1),\ldots,(\bZ_n,Y_n)\big) = \sup_{f \in \mathcal{F}^{\bL_m}} \bar{\ell}_{n,m}\big(f; (\bZ_1,Y_1),\ldots,(\bZ_n,Y_n)\big) = \Lambda_n(\bA).
\] 
Therefore, for any $\epsilon > 0$, there exist $M_{\epsilon} > 0$ and $f^* \stackrel{\mathcal{F}^{\bL_m}}{\sim} (f_1^*,\ldots,f_m^*,c^*) \in  \mathcal{F}^{\bL_m}_{a^*,M_{\epsilon}}$ such that
\[
	\bar{\ell}_{n,m}\Big( f^*;(\bZ_1,Y_1),\ldots,(\bZ_n,Y_n)\Big) \ge \Lambda_n(\bA) - \epsilon.
\]
We can then find piecewise linear and continuous functions $f_1^{**},\ldots,f_m^{**}$ such that $f_j^{**}(Z_{ij}) = f_j^{*}(Z_{ij})$ for every $i=1,\ldots,n$, $j = 1,\ldots,m$. Consequently, the additive function $f^{**}(\bz) = \sum_{j=1}^m f_j^{**}(z_j) + c^*$ is continuous.  It now follows that
\begin{align*}
	\liminf_{k \rightarrow \infty} \Lambda_n(\bA^k) &\ge \liminf_{k \rightarrow \infty} \bar{\ell}_{n,m}\big(f^{**};((\bA^k)^T \bX_1,Y_1),\ldots,((\bA^k)^T \bX_n,Y_n)\big) \\
	&= \bar{\ell}_{n,m}\big(f^{**};(\bZ_1,Y_1),\ldots,(\bZ_n,Y_n)\big) \\
&= \bar{\ell}_{n,m}\big(f^{*};(\bZ_1,Y_1),\ldots,(\bZ_n,Y_n)\big) \ge \Lambda_n(\bA) - \epsilon.
\end{align*}
Since both $\epsilon > 0$ and the sequence $(\bA^k)$ were arbitrary, the result follows.
\end{prooftitle}
\bigskip

\begin{prooftitle}{of Theorem~\ref{Thm:SCAIEconsistency}}
The structure of the proof is essentially the same as that of Theorem~\ref{Thm:SCMLEconsistency}.  For the sake of brevity, we focus on the main changes and on the Gaussian setting.  Following the strategy used in the proof of Theorem~\ref{Thm:SCMLEconsistency}, we work here with the logarithm of a normalised likelihood:
\begin{align*}
\tilde{\ell}_{n,m}(f; \bA) &\equiv \tilde{\ell}_{n,m}\bigl(f; (\bA^T \bX_1,Y_1),\ldots,(\bA^T \bX_n,Y_n) \bigr)\\
 &= \bar{\ell}_{n,m}\bigl(f; (\bA^T \bX_1,Y_1),\ldots,(\bA^T \bX_n,Y_n) \bigr) - \frac{1}{n}\sum_{i=1}^n \sup_{t \in \mathrm{dom}(B)} \{Y_i t - B(t)\} \\ 
&= -\frac{1}{2n}\sum_{i=1}^n \bigl\{f(\bA^T \bX_i) - Y_i\bigr\}^2. 
\end{align*}
So in Step~1, we can establish that $\mathbb{E}\tilde{\ell}_{n,m}(f_0; \bA_0) = -\phi_0/4$.

In Step~2, we aim to bound $\tilde{f}_n^I$ on $[-a,a]^d$ for any fixed $a > 0$. Three cases are considered:
\begin{enumerate}[(a)]
\setlength{\itemsep}{0pt}
\setlength{\parskip}{0pt}
\setlength{\parsep}{0pt}
\item If $m \ge 2$ and $\bL_m \in \mathcal{L}_m$, then $\tilde{f}_n^I$ is either convex or concave. One can now use the convexity/concavity to show that $\limsup_{n \rightarrow \infty} \sup_{\bx \in [-a,a]^d} |\tilde{f}_n^I(\bx)| < M(a)$ almost surely for some deterministic constant $M(a) < \infty$ that only depends on $a$. See, for instance, Proposition~4 of \citet{LimGlynn2012} for a similar argument.
\item Otherwise, if $\bL_m \notin \mathcal{L}_m$, we will show that there exists deterministic $M(a) \in (0,\infty)$ such that with probability one, 
\begin{align}
\label{Eq:AddIndexFuncBound1}
	\tilde{S}^{\bL_m}_n \subseteq \mathcal{G}_{a,M(a)}^{\bL_m, \delta}
\end{align}
for sufficiently large $n$, where we define 
\begin{align*}
	\mathcal{G}^{\bL_m,\delta}_{a,M} = \Big\{ f^I:\mathbb{R}^d\rightarrow \mathbb{R} \; \Big| \; f^I(\bx) = f(\bA^T \bx), \mbox{ with } f \in \mathcal{F}^{\bL_m}_{a,M} \mbox{ and } \bA \in \mathcal{A}^{\bL_m,\delta}_d \Big\}.
\end{align*}
To see this, we first extend Lemma~\ref{Lem:AddFuncBound} to the following:
\begin{lem}
\label{Lem:AddIndexFuncBound}
Fix $a > 0$ and $\delta > 0$, and set $\tilde{\delta}= \min(\delta,d^{-1})$.  For every $f^I(\bx) = f(\bA^T \bx) = \sum_{j=1}^m f_j(\balpha_j^T \bx) + c$ with $f \stackrel{\mathcal{F}^{\bL_m}}{\sim} (f_1,\ldots,f_m,c)$ and $\bA \in \mathcal{A}^{\bL_m,\delta}_d$, there exists a convex, compact subset $D_{f^I}$ of $[-2\tilde{\delta}^{-1/2}ad, 2\tilde{\delta}^{-1/2}ad]^d$ having Lebesgue measure $\bigl(\frac{a}{2d}\bigr)^d$ such that
\begin{align}
\label{Eq:IndexFuncBound2}
	\max\Bigl\{ \inf_{\bx \in D_{f^I}} f^I(\bx), \inf_{\bx \in D_{f^I}} -f^I(\bx)\Bigr\} \ge  \frac{1}{4} \max \biggl\{\sup_{|z_1|\le a} |f_1(z_1)|, \ldots, \sup_{|z_m|\le a} |f_m(z_m)|, 2 |c|\biggr \}.
\end{align}
\end{lem}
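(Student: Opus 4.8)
Proof proposal for Lemma~\ref{Lem:AddIndexFuncBound}.

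The plan is to reduce the statement to Lemma~\ref{Lem:AddFuncBound} together with a pull-back argument. Apply Lemma~\ref{Lem:AddFuncBound} to the additive function $f \stackrel{\mathcal{F}^{\bL_m}}{\sim} (f_1,\ldots,f_m,c)$ on $\mathbb{R}^m$ (with the ``$d$'' there set equal to $m$): this produces an axis-aligned box $B = \times_{j=1}^m D_j \subseteq [-2a,2a]^m$, with each $D_j$ an interval of length at least $a/(2m) \ge a/(2d)$, such that
\[
\max\Bigl\{\inf_{\bz\in B} f(\bz),\ \inf_{\bz\in B}-f(\bz)\Bigr\} \ge \frac14\max\Bigl\{\sup_{|z_1|\le a}|f_1(z_1)|,\ \ldots,\ \sup_{|z_m|\le a}|f_m(z_m)|,\ 2|c|\Bigr\}.
\]
Since $f^I(\bx) = f(\bA^T\bx)$ identically, it then suffices to produce a convex, compact set $D_{f^I}\subseteq\mathbb{R}^d$ with $\bA^T D_{f^I}\subseteq B$, with Lebesgue measure exactly $(a/(2d))^d$, and contained in $[-2\tilde\delta^{-1/2}ad,2\tilde\delta^{-1/2}ad]^d$; then \eqref{Eq:IndexFuncBound2} follows at once.

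Only two properties of $\bA \in \mathcal{A}^{\bL_m,\delta}_d$ are used: $\|\balpha_j\|_1 = 1$ for each $j$, and $\lambda_{\min}(\bA^T\bA)\ge\delta$ (which forces $\bA$ to have full column rank $m$, and, since each $\|\balpha_j\|_2^2\le\|\balpha_j\|_1^2 = 1$ gives $\lambda_{\min}(\bA^T\bA)\le\frac1m\mathrm{tr}(\bA^T\bA)\le1$, also forces $\delta\le1$). Let $R = \bA(\bA^T\bA)^{-1}$, a right inverse of $\bA^T$ with range equal to the column space of $\bA$, and put $D_0 = \{R\bz : \bz\in B\}$, a convex, compact $m$-dimensional parallelepiped in $\mathbb{R}^d$ with $\bA^T D_0 = B$. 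From $\|R\bz\|_2^2 = \bz^T(\bA^T\bA)^{-1}\bz \le \|\bz\|_2^2/\delta$ and $\|\bz\|_2\le 2a\sqrt m$ on $B$ we obtain $D_0\subseteq[-2a\sqrt m\,\delta^{-1/2},2a\sqrt m\,\delta^{-1/2}]^d$; and the Jacobian of $\bz\mapsto R\bz$ together with Hadamard's inequality $\det(\bA^T\bA)\le\prod_j\|\balpha_j\|_2^2\le1$ gives $\mathrm{vol}_m(D_0) = \mathrm{vol}_m(B)/\sqrt{\det(\bA^T\bA)}\ge\mathrm{vol}_m(B)\ge(a/(2d))^m$.

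To get a convex set of exactly the $d$-dimensional measure $(a/(2d))^d$, thicken in the kernel. Let $V = \ker(\bA^T)$, of dimension $d-m$ and orthogonal to the column space of $\bA$, and let $W\subseteq V$ be a $(d-m)$-dimensional cube centred at the origin of side $a/(2d)$, so $\mathrm{vol}_{d-m}(W) = (a/(2d))^{d-m}$ and $W\subseteq[-a/4,a/4]^d$. Shrink $D_0$ homothetically about an interior point to a convex, compact $D'\subseteq D_0$ with $\mathrm{vol}_m(D') = (a/(2d))^m$ (possible since $\mathrm{vol}_m(D_0)\ge(a/(2d))^m$), noting $\bA^T D'\subseteq B$ as $\bA^T$ is linear and $B$ convex. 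Set $D_{f^I} = D' + W$: this is convex and compact, $\bA^T D_{f^I} = \bA^T D'\subseteq B$, and because $D'$ lies in the column space of $\bA$ and $W$ in its orthogonal complement, $\mathrm{vol}_d(D_{f^I}) = \mathrm{vol}_m(D')\,\mathrm{vol}_{d-m}(W) = (a/(2d))^d$; finally $\|\bx\|_\infty\le 2a\sqrt m\,\delta^{-1/2} + a/4\le 2ad\,\tilde\delta^{-1/2}$ for $\bx\in D_{f^I}$, using $\delta\le1$, $m\le d$ and $\tilde\delta=\min(\delta,d^{-1})$ (the case $m=d$, where $W=\{0\}$, being immediate). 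Taking the sign of the inequality from Lemma~\ref{Lem:AddFuncBound} then yields \eqref{Eq:IndexFuncBound2}.

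There is no conceptually hard step: the only real work is the constant bookkeeping — the operator-norm bound on $R$ in terms of $\delta$, the Hadamard lower bound on $\mathrm{vol}_m(D_0)$, and the final $\ell_\infty$ containment — arranged so that everything sits inside $[-2\tilde\delta^{-1/2}ad,2\tilde\delta^{-1/2}ad]^d$ with measure exactly $(a/(2d))^d$. The content of the lemma is just that the constraint $\lambda_{\min}(\bA^T\bA)\ge\delta$ keeps $\bA^T$ uniformly well-conditioned, so that a box of controlled size in $\bz$-space has a controlled-size convex preimage slab in $\bx$-space.
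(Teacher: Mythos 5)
Your proposal is correct, and its core is the same as the paper's: apply Lemma~\ref{Lem:AddFuncBound} to the additive function $f$ on $\mathbb{R}^m$ and pull the resulting box back through $\bA^T$, with the constraints $\|\balpha_j\|_1=1$ and $\lambda_{\min}(\bA^T\bA)\ge\delta$ supplying the quantitative control. Where you genuinely diverge is in the treatment of the rectangular case $m<d$: the paper completes $\bA$ to an invertible $d\times d$ matrix $\bA_+=(\bA,\bB)$ with mutually orthogonal, unit-$\ell_1$-norm extra columns, bounds the eigenvalue moduli of $\bA_+$ (Gerschgorin for the upper bound), and takes $D_{f^I}=(\bA_+^T)^{-1}C_k$, getting the volume bound from $|\det(\bA_+^T)|\le 1$; you instead work with the right inverse $R=\bA(\bA^T\bA)^{-1}$, place the image $RB$ in the column space, control its $m$-dimensional volume via Hadamard's inequality $\det(\bA^T\bA)\le\prod_j\|\balpha_j\|_2^2\le1$ and its size via $\|R\|\le\delta^{-1/2}$, and then thicken by a small cube in $\ker(\bA^T)$ to obtain a full-dimensional convex compact set. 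Your route buys a few small things: it avoids having to construct (and justify) the augmenting matrix $\bB$, it only ever uses singular-value/Gram-determinant quantities (which is what the class $\mathcal{A}^{\bL_m,\delta}_d$ actually controls, rather than eigenvalues of a non-symmetric completion), it makes the bound $\delta\le1$ explicit, and it delivers a set of exactly the measure $(a/(2d))^d$ stated in the lemma; the paper's route is shorter to state since the pulled-back set is a single linear preimage of the Lemma~\ref{Lem:AddFuncBound} box. Both constructions yield convex sets depending on $f^I$ only through $\bA$, which is all that the subsequent Glivenko--Cantelli argument for convex sets requires, so your version slots into Step~2 of the proof of Theorem~\ref{Thm:SCAIEconsistency} without change.
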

\begin{proof}
First consider the case $m = d$. Note that every $\bA \in \mathcal{A}^{\bL_d,\delta}_d$ is invertible.  In fact, if $\lambda$ is an eigenvalue of $\bA$, then $\delta^{1/2} \leq |\lambda| \leq 1$, where the upper bound follows from the Gerschgorin circle theorem \citep{Gerschgorin1931,GradshteynRyzhik2007}.  Let $C_1, \ldots, C_N$ be the sets constructed for $f$ in Lemma~\ref{Lem:AddFuncBound}.  Then, writing $\nu_d$ for Lebesgue measure on $\mathbb{R}^d$,
\[
\min_{1 \le k \le N} \nu_d\big((\bA^{T})^{-1}C_k\big) \ge  \frac{1}{|\mathrm{det}(\bA^T)|} \min_{1 \le k \le N}  \nu_d(C_k) \geq \bigg(\frac{a}{2d}\bigg)^d,
\]
and
\[
\bigcup_{1 \le k \le N}(\bA^{T})^{-1} C_k \subseteq (\bA^{T})^{-1} [-2a,2a]^d \subseteq [-2\tilde{\delta}^{-1/2}ad, 2\tilde{\delta}^{-1/2}ad]^d.
\]
Thus~(\ref{Eq:IndexFuncBound2}) is satisfied.  To complete the proof of this lemma, we note that for any $m < d$, we can always find a $d \times (d-m)$ matrix $\bB=(\bbeta_1,\ldots,\bbeta_{d-m})$ such that 
\begin{enumerate}[1.]
\setlength{\itemsep}{0pt}
\setlength{\parskip}{0pt}
\setlength{\parsep}{0pt}
\item $\|\bbeta_j\|_1 = 1$ for every $j = 1,\ldots,d-m$.
\item $\bbeta_j^T \bbeta_k = 0$ for every $1 \le j < k \le d-m$. 
\item $\bA^T \bB = 0$.
\end{enumerate}
Let $\bA_+ = (\bA,\bB)$, so the modulus of every eigenvalue of $\bA_+$ belongs to $[\min(\delta^{1/2},d^{-1/2}),1]$.  Since $f^I(\bx)= f(\bA^T \bx) \equiv f'({\bA}_{+}^{T} \bx)$ with $f'(\bz) = \sum_{j=1}^{m}f_j(z_j)+c$ for every $\bz = (z_1,\ldots,z_d)^T\in \mathbb{R}^d$, the problem reduces to the case $m=d$. 
\end{proof}
\bigskip

Then, instead of using the strong law of large numbers to complete this step, we apply the Glivenko--Cantelli theorem for classes of convex sets \citep[Theorem 1.11]{BhattacharyaRao1976}.  This change is necessary to circumvent the fact that the set $D_{f^I}$ depends on the function $f^I$ (via its index matrix $\bA$). 

\item Finally, if $m = 1$, then the Cauchy--Schwarz inequality gives that $\mathcal{A}_d^{L_1} \equiv \mathcal{A}^{L_1,\delta}_d$ with $\delta = d^{-1}$. Thus (\ref{Eq:AddIndexFuncBound1}) still holds true. 
\end{enumerate}

Two different cases are considered in Step~4:
\begin{enumerate}[(a)]
\setlength{\itemsep}{0pt}
\setlength{\parskip}{0pt}
\setlength{\parsep}{0pt}
\item If $m \ge 2$ and $\bL_m \in \mathcal{L}_m$, then without loss of generality, we can assume $\bL_m \in \{1,4,5,6\}^m$. It is enough to show that the set of functions
\begin{align*}
	\mathcal{G}^{\bL_m,d}_{a,M_1,M_2} = &\Big\{ h_f:\mathbb{R}^d \times \mathbb{R} \rightarrow \mathbb{R} \; \Big| \; h_f(\bx,y) = -\frac{1}{2}\bigl\{ f(\bx) - y \bigr\}^2 \mathbbm{1}_{\{\bx \in [-a,a]^d\}} \mbox{ with } f: \mathbb{R}^d \rightarrow \mathbb{R} \mbox{ convex}, 
\\&\sup_{\bx \in [-a,a]^d} |f(\bx)| \le M_1 \mbox{ and } |f(\bx_1)-f(\bx_2)| \le M_2 \|\bx_1 -\bx_2\| \mbox{ for any } \bx_1, \bx_2 \in [-a,a]^d \Big\}
\end{align*}
is $P$-Glivenko--Cantelli, where $P$ is the distribution of $(\bX,Y)$. This follows from an application of Corollary~2.7.10 and Theorem~2.4.1 of \citet{vanderVaartWellner1996}, as well as Theorem~3 of \citet{vanderVaartWellner2000}.

\item Otherwise, we need to show that the set of functions 
\begin{align*}
	\mathcal{G}^{\bL_m,d,\delta}_{a,M_1,M_2} = \Big\{ h_{f,\bA}:\mathbb{R}^d \times \mathbb{R} \rightarrow \mathbb{R} \; \Big| \; h_{f,\bA}(\bx,y) = &-\frac{1}{2}\bigl\{ f(\bA^T \bx) - y \bigr\}^2 \mathbbm{1}_{\{\bx \in [-a,a]^d\}} \\ &\mbox{ with } f \in \mathcal{F}^{\bL_m}_{a,M_1,M_2} \mbox{ and } \bA \in \mathcal{A}^{\bL_m,\delta}_d \Big\}
\end{align*}
is $P$-Glivenko--Cantelli. The proof is similar to that given in Step~4 of the proof of Theorem~\ref{Thm:SCMLEconsistency}.  The compactness of $\mathcal{A}^{\bL_m,\delta}_d$, together with a bracketing number argument is used here to establish the claim. See Lemma~\ref{Lem:GAIMGC} in the online supplementary material for details.
\end{enumerate}
\end{prooftitle}
\bigskip

\begin{prooftitle}{of Corollary~\ref{Cor:SCAIEconsistency}}
This result follows from Theorem~1 of \citet{Yuan2011} and our Theorem~\ref{Thm:SCAIEconsistency}. See also Theorem~5 of \citet{SamworthYuan2012} for a similar type of argument. 
\end{prooftitle}

%\newpage

\newpage
\section*{ONLINE SUPPLEMENTARY MATERIAL}

Recall the definition of $\Theta$ from the proof of Proposition~\ref{Prop:gamexistunique}.
\begin{lem}
\label{Lem:Closed}
The set $\Theta$ is a closed subset of $\bar{\mathbb{R}}^n$.
\end{lem}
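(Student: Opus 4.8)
The plan is to reduce the closedness of $\Theta$ to a finite-dimensional fact about finitely generated convex cones, and then to handle the escape of mass to infinity by an induction on the number of generators. First I would show it suffices to prove $\overline{\Theta_0}\subseteq\Theta$, where $\Theta_0:=\{(f(\bX_1),\ldots,f(\bX_n)):f\in\mathcal{F}^{\bL_d}\}$ and the closure is taken in $\bar{\mathbb{R}}^n$. The inclusion $\Theta\subseteq\overline{\Theta_0}$ is immediate from the definition of $\mathrm{cl}(\mathcal{F}^{\bL_d})$; conversely, if $\boldsymbol\eta$ is a limit in $\bar{\mathbb{R}}^n$ of a sequence in $\Theta$, then, writing each member as $(\hat f^k(\bX_i))_i$ with $\hat f^k\in\mathrm{cl}(\mathcal{F}^{\bL_d})$ and approximating $\hat f^k$ by an element of $\mathcal{F}^{\bL_d}$ agreeing with it to within $2^{-k}$ (in the metric of $\bar{\mathbb{R}}$) at each of $\bX_1,\ldots,\bX_n$, a diagonal argument gives $\boldsymbol\eta\in\overline{\Theta_0}$. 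Hence $\Theta$ is closed as soon as $\overline{\Theta_0}\subseteq\Theta$.

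\textbf{Structure of $\Theta_0$.} For each coordinate $j$ fix the finite breakpoint set $B_j:=\{0\}\cup\{X_{1j},\ldots,X_{nj}\}$. Given $f\stackrel{\mathcal{F}^{\bL_d}}{\sim}(f_1,\ldots,f_d,c)$, replace $f_j$ by its piecewise-linear interpolant at $B_j$, extended beyond the range of $B_j$ with the two extreme segment slopes; monotonicity of the secant slopes of a convex (or concave) function shows this interpolant again obeys the shape constraint $l_j$ and vanishes at $0$, and the resulting additive function $\tilde f$ satisfies $\tilde f(\bX_i)=f(\bX_i)$ for all $i$ since each $X_{ij}\in B_j$. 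Such $\tilde f$ is parametrised linearly by the vector $\mathbf v$ of its breakpoint values together with $c$, and the shape constraints become finitely many homogeneous linear inequalities in $\mathbf v$, so $\mathbf v$ ranges over a polyhedral cone $\mathcal{K}$. By the Minkowski--Weyl theorem $\mathcal{K}=\mathrm{cone}(\mathbf u_1,\ldots,\mathbf u_M)$; writing $g_l\in\mathcal{F}^{\bL_d}$ for the function with parameter $\mathbf u_l$ and $\mathbf t_l:=(g_l(\bX_1),\ldots,g_l(\bX_n))\in\mathbb{R}^n$, and using that $\mathcal{F}^{\bL_d}$ is a convex cone of functions (each shape class is closed under addition and non-negative scaling, and $f_j(0)=0$ is homogeneous), one obtains $\Theta_0=\mathrm{cone}(\mathbf t_1,\ldots,\mathbf t_M)$.

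\textbf{Realising limits.} Take $\boldsymbol\eta\in\overline{\Theta_0}$, so $h^k:=\sum_{l=1}^M\mu_l^k g_l\in\mathcal{F}^{\bL_d}$ with $\mu_l^k\ge 0$ and $h^k(\bX_i)\to\eta_i$ for each $i$. I claim that along a subsequence $h^k$ converges pointwise on all of $\mathbb{R}^d$ to some $f$; then $f\in\mathrm{cl}(\mathcal{F}^{\bL_d})$ automatically, $f(\bX_i)=\eta_i$, and $\boldsymbol\eta\in\Theta$. I would prove the claim by induction on $M$. If $(\mu_l^k)_l$ is bounded, pass to a subsequence along which each $\mu_l^k$ converges; the claim is then clear. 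Otherwise put $\rho^k:=\max_l\mu_l^k\to\infty$, pass to a subsequence with $\mu_l^k/\rho^k\to\nu_l$ (so $\max_l\nu_l=1$), set $\psi:=\sum_l\nu_l g_l\in\mathcal{F}^{\bL_d}$ and $\lambda^k:=\min_{l:\nu_l>0}\mu_l^k/\nu_l$, attained (after a further subsequence) at a fixed index $l_0$ with $\nu_{l_0}>0$; since $\lambda^k=\mu_{l_0}^k/\nu_{l_0}$ we get $\lambda^k/\rho^k\to 1$, hence $\lambda^k\to\infty$ and $\mu_l^k-\lambda^k\nu_l=o(\rho^k)=o(\lambda^k)$. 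Then $\tilde h^k:=h^k-\lambda^k\psi=\sum_{l\ne l_0}(\mu_l^k-\lambda^k\nu_l)g_l$ has non-negative coefficients, hence lies in the cone generated by the $M-1$ fixed functions $\{g_l:l\ne l_0\}$, and $\tilde h^k(\bX_i)=h^k(\bX_i)-\lambda^k\psi(\bX_i)$ converges in $\bar{\mathbb{R}}$, so by the inductive hypothesis a subsequence of $\tilde h^k$ converges pointwise everywhere to some $f_0$. Since $\tilde h^k(\bx)=o(\lambda^k)$ at each fixed $\bx$, it follows that $h^k(\bx)=\tilde h^k(\bx)+\lambda^k\psi(\bx)$ tends to $+\infty$ where $\psi(\bx)>0$, to $-\infty$ where $\psi(\bx)<0$, and to $f_0(\bx)$ where $\psi(\bx)=0$ --- a pointwise limit on all of $\mathbb{R}^d$. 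This closes the induction and completes the proof.

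\textbf{Main obstacle.} The genuinely delicate step is this last induction: at points $\bx$ where the componentwise selection (Helly-type) limits of the pieces of $h^k$ would pit $+\infty$ against $-\infty$, one cannot simply pass to limits coordinate by coordinate, and the device of peeling off the dominant blow-up direction $\psi$ --- together with the identity $\lambda^k/\rho^k\to 1$, which forces $\tilde h^k$ to be of strictly smaller order than $\lambda^k$ --- is precisely what makes the pointwise limit exist everywhere. The reduction to $\Theta_0$, the piecewise-linear canonicalisation, and the Minkowski--Weyl step are all routine.
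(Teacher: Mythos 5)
Your argument is correct, and while it opens exactly as the paper does, its core compactness step is genuinely different. Both proofs first reduce, via a diagonal argument, to showing that a sequence in $\mathcal{F}^{\bL_d}$ whose values at $\bX_1,\ldots,\bX_n$ converge in $\bar{\mathbb{R}}$ admits a subsequence converging pointwise in $\bar{\mathbb{R}}$ on all of $\mathbb{R}^d$, and both canonicalise to piecewise-linear interpolants at the data points so that only finitely many parameters matter. From there the paper works cell by cell on the grid generated by the order statistics: on each cell the canonical function is affine, and unbounded coefficient vectors are handled by normalising by the largest coefficient and inducting on the dimension of the affine piece on which the limiting direction vanishes (the nested sets $V_1\supseteq\cdots\supseteq V_{d+1}=\emptyset$). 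You instead note that the canonical class is a polyhedral cone in its parameters, invoke Minkowski--Weyl to obtain finitely many generating functions $g_1,\ldots,g_M$, and prove the general fact that any sequence in the cone generated by finitely many real-valued functions has a subsequence converging pointwise in $\bar{\mathbb{R}}$ everywhere, by induction on $M$ with the dominant blow-up direction $\psi$ peeled off. The paper's route is self-contained and exploits the affine structure directly, with the induction bounded by the dimension $d$; yours is more modular and more general, since the peeling lemma uses no structure of the generators and avoids the cell-by-cell analysis, at the mild cost of appealing to Minkowski--Weyl and a possibly large (but irrelevant) number of generators. One small repair: your assertion that $\tilde{h}^k(\bX_i)$ converges in $\bar{\mathbb{R}}$ is not justified when $\eta_i$ is infinite and $\psi(\bX_i)\neq 0$ (an $\infty-\infty$ situation), but it is also not needed --- either state the inductive claim for arbitrary sequences in an $(M-1)$-generated cone, or extract a further subsequence using the compactness of $\bar{\mathbb{R}}^n$; likewise the trivial base case ($M=0$ or $M=1$) should be recorded explicitly.
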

\begin{proof}
Suppose that, for each $m \in \mathbb{N}$, the vector $\boldsymbol{\eta}^m = (\eta_1^m,\ldots,\eta_n^m)^T$ belongs to $\Theta$, and that $\boldsymbol{\eta}^m \rightarrow \boldsymbol{\eta} = (\eta_1,\ldots,\eta_n)^T$ as $m \rightarrow \infty$.  Then, for each $m \in \mathbb{N}$, there exists a sequence $(f^{m,k}) \in \mathcal{F}^{\bL_d}$ such that $f^{m,k}(\bX_i) \rightarrow \eta_i^m$ as $k \rightarrow \infty$ for $i=1,\ldots,n$.  It follows that we can find $k_m \in \mathbb{N}$ such that $f^{m,k_m}(\bX_i) \rightarrow \eta_i$ as $m \rightarrow \infty$, for each $i=1,\ldots,n$.

For $j=1,\ldots,d$, let $\{X_{(i),j}\}_{i=1}^{N_j}$ denote the distinct order statistics of $\{X_{ij}\}_{i=1}^n$ (thus $N_j < n$ if there are ties among $\{X_{ij}\}_{i=1}^n$).  Moreover, let
\[
\mathcal{V}_j = \{(-\infty,X_{(1),j}],[X_{(1),j},X_{(2),j}],\ldots,[X_{(N_j-1),j},X_{(N_j),j}],[X_{(N_j),j},\infty)\},
\]
and let $\mathcal{V} = \times_{j=1}^d \mathcal{V}_j$.  Thus $|\mathcal{V}| = \prod_{j=1}^d (N_j+1)$ and the union of all the sets in $\mathcal{V}$ is $\mathbb{R}^d$.  Writing $f^{m,k_m} \stackrel{\mathcal{F}^{\bL_d}}{\sim} (f_1^{m,k_m},\ldots,f_d^{m,k_m},c^{m,k_m})$, we define a modified sequence $\tilde{f}^m \stackrel{\mathcal{F}^{\bL_d}}{\sim} (\tilde{f}_1^m,\ldots,\tilde{f}_d^m,\tilde{c}^m)$ at $\bx = (x_1,\ldots,x_d)^T \in \mathbb{R}^d$ by setting 
\begin{align*}
\tilde{f}_j^m(x_j) = \left\{ \begin{array}{ll} \frac{(X_{(i+1),j} - x_j)f_j^{m,k_m}(X_{(i),j})}{X_{(i+1),j} - X_{(i),j}} + \frac{(x_j - X_{(i),j})f_j^{m,k_m}(X_{(i+1),j})}{X_{(i+1),j} - X_{(i),j}} & \mbox{if $x_j \in [X_{(i),j},X_{(i+1),j}]$} \\
\frac{(X_{(2),j} - x_j)f_j^{m,k_m}(X_{(1),j})}{X_{(2),j} - X_{(1),j}} + \frac{(x_j - X_{(1),j})f_j^{m,k_m}(X_{(2),j})}{X_{(2),j} - X_{(1),j}} & \mbox{if $x_j \in (-\infty,X_{(1),j}]$} \\
\frac{(X_{(N_j),j} - x_j)f_j^{m,k_m}(X_{(N_j-1),j})}{X_{(N_j),j} - X_{(N_j-1),j}} + \frac{(x_j - X_{(N_j-1),j})f_j^{m,k_m}(X_{(N_j),j})}{X_{(N_j),j} - X_{(N_j-1),j}} & \mbox{if $x_j \in [X_{(N_j),j},\infty)$,} \end{array} \right.
\end{align*}
and $\tilde{c}^m = c^{m,k_m}$.  Thus each component function $\tilde{f}_j^m$ is piecewise linear, continuous and satisfies the same shape constraint as $f_j^{m,k_m}$, and $\tilde{f}^m$ is piecewise affine and $\tilde{f}^m(\bX_i) = f^{m,k_m}(\bX_i) = \eta_i^m$ for $i=1,\ldots,n$.  The proof will therefore be concluded if we can show that a subsequence of $(\tilde{f}^m)$ converges pointwise in $\bar{\mathbb{R}}$.  To do this, it suffices to show that, given an arbitrary $V \in \mathcal{V}$, we can find a subsequence of $(\tilde{f}^m|_V)$ (where $\tilde{f}^m|_V$ denotes the restriction of $\tilde{f}^m$ to $V$) converging pointwise in $\bar{\mathbb{R}}$.  Note that we can write
\[
\tilde{f}^m|_V(\bx) = (\ba^m)^T(\bx^T,1)^T 
\]
for some $\ba^m = (a_1^m,\ldots,a_{d+1}^m)^T \in \mathbb{R}^{d+1}$.  If the sequence $(\ba^m)$ is bounded, then we can find a subsequence $(\ba^{m_k})$, converging to $\ba \in \mathbb{R}^{d+1}$, say.  In that case, for all $\bx \in V$, we have $\tilde{f}^{m_k}|_V(\bx) \rightarrow \ba^T(\bx^T,1)^T$, and we are done.  On the other hand, if $(\ba^m)$ is unbounded, we can let $j^m = \argmax_{j = 1,\ldots,d+1} |a_j^m|$, where we choose the largest index in the case of ties.  Since $j^m$ can only take $d+1$ values, we may assume without loss of generality that there is a subsequence $(j^{m_k})$ such that $j^{m_k} = d+1$ for all $k \in \mathbb{N}$ and such that $a_{d+1}^{m_k} \rightarrow \infty$ as $k \rightarrow \infty$.  By choosing further subsequences if necessary, we may also assume that
\[
\biggl(\frac{a_1^{m_k}}{a_{d+1}^{m_k}},\ldots,\frac{a_d^{m_k}}{a_{d+1}^{m_k}}\biggr)^T \rightarrow (\tilde{a}_1,\ldots,\tilde{a}_d)^T =: \tilde{\ba},
\]
say, where $\tilde{\ba} \in [-1,1]^d$.  Writing $V_1 = \{\bx \in V: (\tilde{\ba}^T,1)(\bx^T,1)^T = 0\}$, $V_1^+ = \{\bx \in V: (\tilde{\ba}^T,1)(\bx^T,1)^T > 0\}$ and $V_1^- = \{\bx \in V: (\tilde{\ba}^T,1)(\bx^T,1)^T < 0\}$, we deduce that for large $k$,
\[
\tilde{f}^{m_k}|_V(\bx) = a_{d+1}^{m_k}\biggl(\frac{a_1^{m_k}}{a_{d+1}^{m_k}},\ldots,\frac{a_d^{m_k}}{a_{d+1}^{m_k}},1\biggr)^T(\bx^T,1)^T \rightarrow \left\{ \begin{array}{ll} \infty & \mbox{if $\bx \in V_1^+$} \\
-\infty & \mbox{if $\bx \in V_1^-$.} \end{array} \right.
\]
It therefore suffices to consider $\tilde{f}^{m_k}|_{V_1}$.  We may assume that $\tilde{\ba} \neq \mathbf{0}$ (otherwise $V_1 = \emptyset$ and we are done), so without loss of generality assume $\tilde{a}_d \neq 0$.  But then, for $\bx \in V_1$,
\[
\tilde{f}^{m_k}|_{V_1}(\bx) = (\ba^{m_k})^T(\bx^T,1)^T = (\bb^{m_k})^T(\bx_{(-d)}^T,1)^T,
\]
where $\bx_{(-d)} = (x_1,\ldots,x_{d-1})^T$, and where $\bb^{m_k} = (b_1^{m_k},\ldots,b_d^{m_k}) \in \mathbb{R}^d$, with $b_j^{m_k} = a_j^{m_k} - \frac{a_d^{m_k}}{\tilde{a}_d}\tilde{a}_j$ for $j=1,\ldots,d-1$ and $b_d^{m_k} = a_{d+1}^{m_k} - \frac{a_d^{m_k}}{\tilde{a}_d}$.  Applying the same argument inductively, we find subsets $V_2,\ldots,V_{d+1}$, where $V_1 \supseteq V_2 \supseteq \ldots \supseteq V_{d+1}$, where $V_j$ has dimension $d-j$ and $V_{d+1} = \emptyset$, such that a subsequence of $(\tilde{f}^{m_k})$ converges pointwise in $\bar{\mathbb{R}}$ for all $\bx \in V \setminus V_j$.
\end{proof}
\bigskip

Now recall the definitions of $\mathcal{F}_{a,M}^{\bL_d}$, $\mathcal{F}_{a,M_1,M_2}^{\bL_d}$, $M(a)$ and $W(a)$ from the proof of Theorem~\ref{Thm:SCMLEconsistency}.
\begin{lem}
\label{Lem:Intersect}
For any $a>0$, we have $\mathrm{cl}\big(\mathcal{F}_{a,M(a)}^{\bL_d}\big) \cap \mathrm{cl}\big(\mathcal{F}_{a+1,M(a+1)}^{\bL_d}\big)  \subseteq \mathrm{cl}\big(\mathcal{F}_{a,M(a),W(a)}^{\bL_d}\big)$.
\end{lem}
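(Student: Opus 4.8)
The plan is to combine a deterministic secant-slope estimate with a boundedness fact for shape-constrained components through the origin, and then to apply both directly to an approximating sequence, rather than trying to interchange closures and intersections.

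I would first record two elementary ingredients. \emph{(Boundedness.)} If $\phi:\mathbb{R}\to\mathbb{R}$ is monotone, convex, concave or linear with $\phi(0)=0$ and $\max\{|\phi(-b)|,|\phi(b)|\}\le M$, then $\sup_{|x|\le b}|\phi(x)|\le M$; in the monotone case this is immediate, and in the convex case (hence, up to sign, also the concave and linear cases) it follows on writing, for $x\in[0,b]$, both $x$ as a convex combination of $0$ and $b$ and $0$ as a convex combination of $x$ and $-b$, and symmetrically for $x\in[-b,0]$. \emph{(Secant bound.)} If $\phi$ is convex, concave or linear with $\max\{|\phi(a)|,|\phi(-a)|\}\le M_1$ and $\max\{|\phi(a+1)|,|\phi(-a-1)|\}\le M_2$, then for $-a\le z_1<z_2\le a$ the difference quotient $\{\phi(z_2)-\phi(z_1)\}/(z_2-z_1)$ lies (by monotonicity of the secant slopes of a convex function, and the reversed monotonicity for a concave one) between the two numbers $\phi(-a)-\phi(-a-1)$ and $\phi(a+1)-\phi(a)$, each of which has absolute value at most $M_1+M_2$; hence $|\phi(z_1)-\phi(z_2)|\le(M_1+M_2)|z_1-z_2|$ for all $z_1,z_2\in[-a,a]$.

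Next I would take $f\in\mathrm{cl}\big(\mathcal{F}_{a,M(a)}^{\bL_d}\big)\cap\mathrm{cl}\big(\mathcal{F}_{a+1,M(a+1)}^{\bL_d}\big)$ and choose sequences $(g^k)\subseteq\mathcal{F}_{a,M(a)}^{\bL_d}$ and $(h^k)\subseteq\mathcal{F}_{a+1,M(a+1)}^{\bL_d}$ with $g^k\to f$ and $h^k\to f$ pointwise on $\mathbb{R}^d$; write $g^k\stackrel{\mathcal{F}^{\bL_d}}{\sim}(g_1^k,\ldots,g_d^k,c^k)$ and $h^k\stackrel{\mathcal{F}^{\bL_d}}{\sim}(h_1^k,\ldots,h_d^k,\tilde{c}^k)$, and let $\mathbf{e}_j$ denote the $j$-th coordinate vector of $\mathbb{R}^d$. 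Fixing $j\in\{1,\ldots,d\}$ and $x_j\in\mathbb{R}$, we have $g^k_j(x_j)=g^k(x_j\mathbf{e}_j)-g^k(\mathbf{0})$ and $h^k_j(x_j)=h^k(x_j\mathbf{e}_j)-h^k(\mathbf{0})$, with $g^k(\mathbf{0})=c^k$ and $h^k(\mathbf{0})=\tilde{c}^k$ bounded in modulus by $M(a)/2$ and $M(a+1)/2$ respectively; and when $|x_j|\le a+1$ the boundedness fact applied to $h_j^k$ gives $|h^k_j(x_j)|\le M(a+1)$. It follows that $f(x_j\mathbf{e}_j)$ and $f(\mathbf{0})$ are finite, that $g^k_j(x_j)$ and $h^k_j(x_j)$ both converge to the finite limit $f(x_j\mathbf{e}_j)-f(\mathbf{0})$, and that this limit has modulus at most $M(a+1)$; hence $|g^k_j(\pm(a+1))|\le M(a+1)+1$ for all sufficiently large $k$, and since there are only finitely many pairs $(j,\pm(a+1))$, a single index $K$ works for all of them.

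To finish, I would fix $k\ge K$: for every $j$ with $l_j\in\{1,4,5,6,7,8,9\}$ the component $g^k_j$ is convex, concave or linear with $|g^k_j(\pm a)|\le M(a)$ and $|g^k_j(\pm(a+1))|\le M(a+1)+1$, so the secant bound gives $|g^k_j(z_1)-g^k_j(z_2)|\le\big(M(a)+M(a+1)+1\big)|z_1-z_2|=W(a)|z_1-z_2|$ on $[-a,a]$; together with $g^k\in\mathcal{F}_{a,M(a)}^{\bL_d}$ this shows $g^k\in\mathcal{F}_{a,M(a),W(a)}^{\bL_d}$, and since $g^k\to f$ pointwise we conclude $f\in\mathrm{cl}\big(\mathcal{F}_{a,M(a),W(a)}^{\bL_d}\big)$. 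The main point the argument is built to finesse is that $\mathrm{cl}(A)\cap\mathrm{cl}(B)$ is in general strictly larger than $\mathrm{cl}(A\cap B)$: the second sequence $(h^k)$ is never used to produce approximants inside the intersection, only to certify that the limiting values at $\pm(a+1)$ of the individual components of $(g^k)$ are finite and suitably bounded, which is precisely what makes the secant bound applicable to $g^k$ for large $k$. The slightly delicate steps are the extended-real bookkeeping needed to justify the componentwise convergence, and checking the boundedness fact in the convex/concave cases; neither is hard.
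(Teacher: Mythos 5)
Your proof is correct and follows essentially the same route as the paper's: the second approximating sequence from $\mathcal{F}_{a+1,M(a+1)}^{\bL_d}$ is used only to certify that the limiting values $f(\pm(a+1)\mathbf{e}_j)-f(\mathbf{0})$ have modulus at most $M(a+1)$, so that the sequence from $\mathcal{F}_{a,M(a)}^{\bL_d}$ eventually lies in $\mathcal{F}_{a,M(a)}^{\bL_d}\cap\mathcal{F}_{a+1,M(a+1)+1}^{\bL_d}\subseteq\mathcal{F}_{a,M(a),W(a)}^{\bL_d}$. Your version is a modest streamlining of the same idea---arguing directly for all components at once rather than by contradiction, pigeonholing an argmax index and splitting into the cases $M(a)\le M(a+1)$ and $M(a)>M(a+1)$, and spelling out the secant-slope Lipschitz bound the paper leaves implicit---so no substantive difference.
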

\begin{proof}
We first consider the case $M(a) \le M(a+1)$. Suppose $f \in \mathrm{cl}\big(\mathcal{F}_{a,M(a)}^{\bL_d}\big) \cap \mathrm{cl}\big(\mathcal{F}_{a+1,M(a+1)}^{\bL_d}\big)$, so there exists a sequence $(f^k)$ such that $f^k \in \mathcal{F}_{a,M(a)}^{\bL_d}$ and such that $f^k \stackrel{\mathcal{F}^{\bL_d}}{\sim} (f_1^k,\ldots,f_d^k,c^k)$ converges pointwise in $\bar{\mathbb{R}}$ to $f$.  Our first claim is that there exists a subsequence $(f^{k_m})$ such that $f^{k_m} \in \mathcal{F}_{a+1,M(a+1)+1}^{\bL_d}$ for every $m \in \mathbb{N}$. 

Indeed, suppose for a contradiction that there exists $K \in \mathbb{N}$ such that for every $k \geq K$, we have $f^k \notin \mathcal{F}_{a+1,M(a+1)+1}^{\bL_d}$. Let
\[
\bb^k = (b^k_1,\ldots,b^k_{2d+1})^T = \Bigl(|f_1^k(-a-1)|, |f_1^k(a+1)|, \ldots, |f_d^k(-a-1)|, |f_d^k(a+1)|, 2|c^k|\Bigr )^T.
\]
It follows from our hypothesis and the shape restrictions that $\max_{j=1,\ldots,2d+1} b_j^k > M(a+1)+1$ for $k \geq K$. Furthermore, we cannot have $\argmax_{j=1,\ldots,2d+1} b_j^k = 2d+1$ for any $k \geq K$, because $2|c^k| = 2|f^k(\mathbf{0})| \leq M(a) < M(a+1) + 1$ for every $k \in \mathbb{N}$.  We therefore let $j^k = \argmax_{j = 1,\ldots,2d} b_j^k$, where we choose the largest index in the case of ties.  Since $j^k$ can only take $2d$ values, we may assume without loss of generality that there is a subsequence $(j^{k_m})$ such that $j^{k_m} = 2d$ for all $m \in \mathbb{N}$.  But, writing $\bx_0 = (0,\ldots,0,a+1)^T \in \mathbb{R}^d$, this implies that 
\[
|f(\bx_0) - f(\mathbf{0})| = \lim_{m \rightarrow \infty} |f^{k_m}(\bx_0)-f^{k_m}(\mathbf{0})| = \lim_{m \rightarrow \infty} |f_d^{k_m}(a+1)| \geq M(a+1)+1.
\]
On the other hand, since $f \in \mathrm{cl}\big(\mathcal{F}_{a+1,M(a+1)}^{\bL_d}\big)$, we can find $(\tilde{f}^m) \in \mathcal{F}_{a+1,M(a+1)}^{\bL_d}$ such that $\tilde{f}^m \stackrel{\mathcal{F}^{\bL_d}}{\sim} (\tilde{f}_1^m,\ldots,\tilde{f}_d^m,\tilde{c}^m)$ converges pointwise in $\bar{\mathbb{R}}$ to $f$.  So
\[
|f(\bx_0) - f(\mathbf{0})| = \lim_{m \rightarrow \infty} |\tilde{f}^m(\bx_0)-\tilde{f}^m(\mathbf{0})| = \lim_{m \rightarrow \infty} |\tilde{f}_d^m(a+1)| \leq M(a+1).
\]
This contradiction establishes our first claim.  Since $\mathcal{F}_{a,M(a)}^{\bL_d} \cap \mathcal{F}_{a+1,M(a+1)+1}^{\bL_d} \subseteq \mathcal{F}_{a,M(a),W(a)}^{\bL_d}$, we deduce that $f \in \mathrm{cl}\big(\mathcal{F}_{a,M(a),W(a)}^{\bL_d}\big)$ in the case where $M(a) \le M(a+1)$.

Now if $M(a) > M(a+1)$, then for every $f \in \mathrm{cl}\big(\mathcal{F}_{a,M(a)}^{\bL_d}\big) \cap \mathrm{cl}\big(\mathcal{F}_{a+1,M(a+1)}^{\bL_d}\big)$,  there exists a sequence $(f^k)$ such that $f^k \in \mathcal{F}_{a+1,M(a+1)}^{\bL_d}$ and such that $f^k$ converges pointwise in $\bar{\mathbb{R}}$ to $f$. By the shape restrictions, $\mathcal{F}_{a+1,M(a+1)}^{\bL_d} \subseteq \mathcal{F}_{a,M(a)}^{\bL_d}$, so $f^k \in \mathcal{F}_{a,M(a)}^{\bL_d}$. Consequently, $f^k \in \mathcal{F}_{a,M(a),W(a)}^{\bL_d}$ as above, so $f \in \mathrm{cl}\big(\mathcal{F}_{a,M(a),W(a)}^{\bL_d}\big)$.
\end{proof}
\bigskip

\begin{lem}
\label{Lem:positive}
Under assumptions \textbf{(A.1)} - \textbf{(A.4)}, for any $a,M_1, M_2, \epsilon > 0$,
\begin{align*}
\mathbb{E} \big[\bigl\{ Yf_0(\bX) - B(f_0(\bX))  \big\}&\mathbbm{1}_{\{\bX \in [-a-1,a+1]^d\}}\bigr] \\
&> \sup_{f \in \mathcal{F}_{a,M_1,M_2}^{\bL_d} \backslash B_{\epsilon}(f_0)} \mathbb{E}\big[\bigl\{ Yf(\bX) - B(f(\bX)) \big\} \mathbbm{1}_{\{\bX \in [-a-1,a+1]^d\}}\bigr].
\end{align*}
\end{lem}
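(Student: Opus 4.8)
The plan is to prove that the population criterion $\Phi(f):=\mathbb{E}\bigl[\{Yf(\bX)-B(f(\bX))\}\mathbbm{1}_{\{\bX\in[-a-1,a+1]^d\}}\bigr]$ is \emph{uniquely} maximised by $f_0$ (an identifiability step based on the strict convexity of $B$), and then to upgrade the resulting pointwise-strict inequality to one that is strict \emph{uniformly} over $\mathcal{F}_{a,M_1,M_2}^{\bL_d}\setminus B_\epsilon(f_0)$ by a compactness argument.

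\emph{Identifiability.} Since the canonical link is used, $\mathbb{E}(Y\mid\bX)=B'(f_0(\bX))$, so $\mathbb{E}\bigl[\{Yf(\bX)-B(f(\bX))\}\mid\bX\bigr]=Q(f(\bX);f_0(\bX))$, where $Q(\eta;\eta_0):=B'(\eta_0)\eta-B(\eta)$, extended to $\eta\in\bar{\mathbb{R}}$ via the limiting (and, for the Gamma, the $-\infty$) conventions used earlier. The difference $Q(\eta_0;\eta_0)-Q(\eta;\eta_0)=B(\eta)-B(\eta_0)-B'(\eta_0)(\eta-\eta_0)$ is the Bregman divergence of the strictly convex $B$, and $Q(\,\cdot\,;\eta_0)\to-\infty$ as $\eta\to\pm\infty$ for each of the four families in Table~\ref{Tab:glmformula}; hence $Q(\,\cdot\,;\eta_0)$ attains its maximum over $\bar{\mathbb{R}}$ only at $\eta=\eta_0$. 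Integrating, $\Phi(f)\le\Phi(f_0)$ for every $f$, with equality if and only if $f=f_0$ Lebesgue-a.e.\ on $[-a-1,a+1]^d$; here \textbf{(A.2)} gives mutual absolute continuity of the law of $\bX$ and Lebesgue measure on that cube, and \textbf{(A.4)} bounds $f_0$, hence $Q(f_0(\,\cdot\,);f_0(\,\cdot\,))$, there, so that $\Phi(f_0)$ is finite.

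\emph{Compactness.} Suppose the claimed inequality fails, so that (assuming the class is nonempty, the contrary case being trivial) there is a maximising sequence $(f^k)\subseteq\mathcal{F}_{a,M_1,M_2}^{\bL_d}\setminus B_\epsilon(f_0)$ with $\Phi(f^k)\to\Phi(f_0)$; write $f^k\stackrel{\mathcal{F}^{\bL_d}}{\sim}(f_1^k,\dots,f_d^k,c^k)$. Each $f_j^k$ obeys the shape constraint $l_j$, vanishes at $0$, satisfies $|f_j^k(\pm a)|\le M_1$, and is $M_2$-Lipschitz on $[-a,a]$ when $l_j\notin\{2,3\}$, while $|c^k|\le M_1/2$. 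A coordinatewise Helly-type selection argument --- in the spirit of the proof of Lemma~\ref{Lem:Closed}, using Arzel\`a--Ascoli on $[-a,a]$ for the Lipschitz components and the standard selection principle for convex/concave ones --- lets one pass to a subsequence along which $c^k\to c^\infty$ and $f_j^k\to f_j^\infty$ in $\bar{\mathbb{R}}$ (everywhere on $[-a-1,a+1]$ for monotone $l_j$, and Lebesgue-a.e.\ otherwise), with each $f_j^\infty$ still obeying $l_j$ and $f_j^\infty(0)=0$; hence $f^k\to f^\infty:=\sum_{j=1}^d f_j^\infty(x_j)+c^\infty$ Lebesgue-a.e.\ on $[-a-1,a+1]^d$. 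Because $Q(\,\cdot\,;\eta_0)\le Q(\eta_0;\eta_0)$ pointwise and $Q(f_0(\,\cdot\,);f_0(\,\cdot\,))\mathbbm{1}_{\{\,\cdot\,\in[-a-1,a+1]^d\}}$ is integrable, the reverse Fatou lemma and the continuity of $Q(\,\cdot\,;\eta_0)$ on $\bar{\mathbb{R}}$ give
\[
\Phi(f_0)=\limsup_{k\to\infty}\Phi(f^k)\le\mathbb{E}\bigl[Q(f^\infty(\bX);f_0(\bX))\mathbbm{1}_{\{\bX\in[-a-1,a+1]^d\}}\bigr]\le\Phi(f_0),
\]
so by the identifiability step $f^\infty=f_0$ a.e.\ on $[-a-1,a+1]^d$. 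Decomposing this a.e.\ identity additively, and using the continuity of each $f_{0,j}$, forces $f_j^\infty-f_{0,j}$ to be constant on $[-a,a]$, the constant being zero since both functions vanish at $0$; likewise $c^\infty=c_0$. Thus $f_j^k\to f_{0,j}$ pointwise on $[-a,a]$. Pointwise convergence of monotone functions to a continuous limit is automatically uniform on a compact interval, and the Lipschitz components are equicontinuous, so $f_j^k\to f_{0,j}$ uniformly on $[-a,a]$ for each $j$; hence $f^k\to f_0$ uniformly on $[-a,a]^d$, which contains the cube defining $B_\epsilon(f_0)$. This contradicts $f^k\notin B_\epsilon(f_0)$ for all $k$, and completes the proof.

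I expect the compactness step to be the main obstacle. The class $\mathcal{F}_{a,M_1,M_2}^{\bL_d}$ is not relatively compact in the uniform norm, since its purely monotone components can concentrate a fixed amount of variation on a vanishing interval; this is why one must work on $\bar{\mathbb{R}}$ with pointwise convergence, control the possibly infinite limiting values under the integral via reverse Fatou rather than dominated convergence, and only at the very end recover the uniform convergence on the small cube that yields the contradiction. The remaining ingredients --- the Bregman-divergence identity, the behaviour of $Q(\,\cdot\,;\eta_0)$ at $\pm\infty$, and the selection arguments --- are routine.
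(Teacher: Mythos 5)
Your proposal is correct in outline, but it takes a genuinely different route from the paper. The paper's proof is direct and quantitative: for $f \in \mathcal{F}_{a,M_1,M_2}^{\bL_d}\setminus B_{\epsilon}(f_0)$ with $|f(\bx^*)-f_0(\bx^*)|>\epsilon$ at some $\bx^*\in[-a,a]^d$, it uses the monotonicity constraints, the Lipschitz bound $M_2$ and the uniform continuity of $f_0$ to exhibit one of two boxes $C_{\bx^*,1},C_{\bx^*,2}$, of size independent of $f$ and $\bx^*$, on which $|f-f_0|>\epsilon/2$ throughout; assumption \textbf{(A.2)} together with compactness in $\bx^*$ gives a uniform lower bound $\xi>0$ on the probability of these boxes, and a quadratic lower bound on the Bregman divergence of $B$ then yields the explicit uniform gap $\tfrac{1}{16}\xi\epsilon^2\inf_s (g^{-1})'(s)>0$, with no limiting arguments at all. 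Your proof instead combines the same Bregman identifiability observation with a Helly/Arzel\`a--Ascoli selection and a reverse-Fatou contradiction; this is more generic (it would work for any class that is sequentially compact for a.e.\ convergence in $\bar{\mathbb{R}}$ and any criterion upper semicontinuous along such limits) but is purely qualitative, whereas the paper's construction produces an explicit positive bound with far less machinery. Two points in your write-up need repair before it is complete: (i) \textbf{(A.2)} (a Lebesgue density with support $\mathbb{R}^d$) does \emph{not} give mutual absolute continuity with Lebesgue measure on the cube, since the density may vanish on a Lebesgue-nonnull set; the equality case therefore only yields $f^\infty=f_0$ $P_{\bX}$-almost everywhere, hence on a dense subset of the cube, and your Fubini-style additive decomposition must be replaced by an argument that identifies the components through one-sided limits along this dense agreement set (which the shape constraints, the normalisation $f_j^\infty(0)=0$ and the continuity of $f_0$ do permit); (ii) the ``Helly-type selection'' for the convex/concave components on $[-a-1,a+1]\setminus[-a,a]$, where no upper bounds are available, is nontrivial (limits can be $+\infty$ on a terminal subinterval) and needs to be spelled out, although, as you note, the reverse-Fatou device only requires a.e.\ convergence in $\bar{\mathbb{R}}$, so this can be done.
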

\begin{proof}
Since $B' = g^{-1}$, we have that for every $\bx \in [-a-1,a+1]^d$, the expression 
\[
\mathbb{E} \big\{ Yf(\bX) - B(f(\bX)) | \bX = \bx \big\} = g^{-1}(f_0(\bx))f(\bx) - B(f(\bx))
\]
is uniquely maximised by taking $f(\bx) = f_0(\bx)$. Moreover, since $f_0$ is continuous by assumption \textbf{(A.4)}, it is uniformly continuous on $[-a-1,a+1]^d$.  We may therefore assume that for any $\epsilon' > 0$, there exists $\gamma(\epsilon') > 0$ such that $|f_{0,j}(z_1)-f_{0,j}(z_2)|<\epsilon'$ for every $j = 1,\ldots,d$ and every $z_1,z_2 \in [-a-1,a+1]$ with $|z_1-z_2| < \gamma(\epsilon')$.  For any $f \in \mathcal{F}_{a,M_1,M_2}^{\bL_d} \backslash B_{\epsilon}(f_0)$, there exists $\bx^* = (x_1^*,\ldots,x_d^*)^T \in [-a,a]^d$ such that $|f(\bx^*)-f_0(\bx^*)| > \epsilon$.  Let $C_{\bx^*,1} = \times_{j=1}^d D_j \subseteq [-a-1,a+1]^d$ where 
\[
D_j = 
\begin{cases}
[x^*_j,x^*_j + \min\{\gamma(\frac{\epsilon}{2d}),1\}] & \mbox{ if } l_j = 2 \\
[x^*_j - \min\{\gamma(\frac{\epsilon}{2d}),1\},x^*_j] & \mbox{ if } l_j = 3 \\ 
\Bigl[x^*_j - \min \Big\{\frac{1}{M_2}\frac{\epsilon}{4d}, \gamma(\frac{\epsilon}{4d}), 1\Big\},x^*_j+\min \Big\{\frac{1}{M_2}\frac{\epsilon}{4d}, \gamma(\frac{\epsilon}{4d}), 1\Big\}\Bigr] & \mbox{ if } l_j \in \{1,4,5,6,7,8,9\}.
\end{cases}
\]
Define $C_{\bx^*,2}$ similarly, but with the intervals in the cases $l_j=2$ and $l_j=3$ exchanged.  Then the shape constraints ensure that $\max\{\inf_{\bx \in C_{\bx^*,1}} |f(\bx)-f_0(\bx)|,\inf_{\bx \in C_{\bx^*,2}} |f(\bx)-f_0(\bx)|\} > \epsilon/2$.  But the $d$-dimensional Lebesgue measures of $C_{\bx^*,1}$ and $C_{\bx^*,2}$ do not depend on $\bx^*$, and $\min\{\mathbb{P}(\bX \in C_{\bx^*,1}),\mathbb{P}(\bX \in C_{\bx^*,2})\}$ is a continuous function of $\bx^*$, so by (\textbf{A.2}), we have 
\[
\xi = \inf_{\bx^* \in [-a,a]^d} \min\{\mathbb{P}(\bX \in C_{\bx^*,1}),\mathbb{P}(\bX \in C_{\bx^*,2})\} > 0.
\]  
Moreover, writing $\underline{f}_0 = \inf_{\bx \in [-a-1,a+1]^d} f_0(\bx)$ and $\overline{f}_0 = \sup_{\bx \in [-a-1,a+1]^d} f_0(\bx)$, and using the fact that $s \mapsto \bigl[\{g^{-1}(f_0(\bx))f_0(\bx) - B(f_0(\bx))\} - \{ g^{-1}(f_0(\bx))s - B(\bx) \}\bigr]$ is convex, we deduce that 
\begin{align*}
\mathbb{E} \big[ &\bigl\{Yf_0(\bX) - B(f_0(\bX))\bigr\} \mathbbm{1}_{\{\bX \in [-a-1,a+1]^d\}}\bigr] \\
&- \sup_{f \in \mathcal{F}_{a,M_1,M_2}^{\bL_d} \setminus B_{\epsilon}(f_0)} \mathbb{E}\big[ \bigl\{Yf(\bX) - B(f(\bX)) \big\} \mathbbm{1}_{\{\bX \in [-a-1,a+1]^d\}}\bigr] \\
&\geq \xi \inf_{\bx \in [-a-1,a+1]^d} \inf_{|t-f_0(\bx)| > \epsilon/2} \bigl[\{g^{-1}(f_0(\bx))f_0(\bx) - B(f_0(\bx))\} - \{g^{-1}(f_0(\bx))t - B(t)\}\bigr] \\
&\geq \frac{1}{16}\xi \epsilon^2 \inf_{s \in [\underline{f}_0-\epsilon/2,\overline{f}_0+\epsilon/2]} (g^{-1})'(s) > 0.
\end{align*}
\end{proof}
\bigskip
\begin{lem}
\label{Lem:Intersect2}
For any $a^* > a_0+1$, we have 
\begin{align*}
&\mathrm{cl}\Big(\mathcal{F}_{a_0,M(a_0)}^{\bL_d}\backslash B_{\epsilon}(f_0)\Big)\; \cap \;  \mathrm{cl}\Big(\mathcal{F}_{a_0+1,M(a_0+1)}^{\bL_d} \backslash B_{\epsilon}(f_0)\Big)  \; \cap \;  \mathrm{cl}\Big(\mathcal{F}_{a^*,M(a^*)}^{\bL_d} \backslash B_{\epsilon}(f_0)\Big) \; \\
&\cap \;  \mathrm{cl}\Big(\mathcal{F}_{a^*+1,M(a^*+1)}^{\bL_d} \backslash B_{\epsilon}(f_0)\Big) \subseteq \mathrm{cl}\bigg(\big(\mathcal{F}_{a_0,M(a_0),W(a_0)}^{\bL_d} \cap \mathcal{F}_{a^*,M(a^*)+1,W(a^*)+1}^{\bL_d}\big) \backslash B_{\epsilon}(f_0) \bigg).
\end{align*}
\end{lem}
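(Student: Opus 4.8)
The plan is to imitate the proof of Lemma~\ref{Lem:Intersect}, applied in effect twice --- once to the radii $a_0,a_0+1$ and once to $a^*,a^*+1$ --- but to work throughout with a \emph{single} approximating sequence that is refined through finitely many subsequences; this circumvents the fact that an intersection of pointwise closures need not be the closure of the intersection. Fix $f$ in the four-fold intersection on the left-hand side. Since $f$ lies in $\mathrm{cl}(\mathcal{F}_{a_0,M(a_0)}^{\bL_d}\backslash B_{\epsilon}(f_0)) \cap \mathrm{cl}(\mathcal{F}_{a_0+1,M(a_0+1)}^{\bL_d}\backslash B_{\epsilon}(f_0))$, the argument in the proof of Lemma~\ref{Lem:Intersect} goes through with the restriction ``$\notin B_{\epsilon}(f_0)$'' carried along unchanged (it is merely a property of the individual members of the sequence), and produces a sequence $(f^k)$ with $f^k \stackrel{\mathcal{F}^{\bL_d}}{\sim} (f_1^k,\ldots,f_d^k,c^k) \in \mathcal{F}_{a_0,M(a_0),W(a_0)}^{\bL_d}\backslash B_{\epsilon}(f_0)$ and $f^k \to f$ pointwise in $\bar{\mathbb{R}}$. (In the case $M(a_0)\le M(a_0+1)$ this sequence is drawn from $\mathcal{F}_{a_0,M(a_0)}^{\bL_d}\backslash B_{\epsilon}(f_0)$ with a subsequence lying in $\mathcal{F}_{a_0+1,M(a_0+1)+1}^{\bL_d}$; in the case $M(a_0)>M(a_0+1)$ it is drawn from $\mathcal{F}_{a_0+1,M(a_0+1)}^{\bL_d}\backslash B_{\epsilon}(f_0)\subseteq \mathcal{F}_{a_0,M(a_0)}^{\bL_d}$.) In particular $2|c^k|\le M(a_0)$, so $(c^k)$ is bounded and $f^k(\mathbf{0})=c^k\to f(\mathbf{0})$, a finite limit.

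Next I would refine $(f^k)$ using the hypotheses at $a^*$ and at $a^*+1$. Since $f\in\mathrm{cl}(\mathcal{F}_{a^*,M(a^*)}^{\bL_d})$, choose $(\tilde f^m)\in\mathcal{F}_{a^*,M(a^*)}^{\bL_d}$ with $\tilde f^m\to f$ pointwise; evaluating at $\mathbf{0}$ and at $\pm a^*\mathbf{e}_j$ (with $\mathbf{e}_j$ the $j$-th standard basis vector of $\mathbb{R}^d$) and using the identity $g(\pm a^*\mathbf{e}_j)-g(\mathbf{0})=g_j(\pm a^*)$ valid for any $g\stackrel{\mathcal{F}^{\bL_d}}{\sim}(g_1,\ldots,g_d,c)$, one finds $|f(\pm a^*\mathbf{e}_j)-f(\mathbf{0})|\le M(a^*)$ and $2|f(\mathbf{0})|\le M(a^*)$, and hence (the limits being finite because $(c^k)$ is bounded) $f_j^k(\pm a^*)=f^k(\pm a^*\mathbf{e}_j)-c^k\to f(\pm a^*\mathbf{e}_j)-f(\mathbf{0})$, so that $f^k\in\mathcal{F}_{a^*,M(a^*)+1}^{\bL_d}$ for all large $k$. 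Passing to this subsequence and repeating the argument with $a^*$ replaced by $a^*+1$, a further subsequence satisfies $f^k\in\mathcal{F}_{a^*+1,M(a^*+1)+1}^{\bL_d}$. Along the relabelled subsequence, $f^k$ then lies in $\mathcal{F}_{a_0,M(a_0),W(a_0)}^{\bL_d}\cap\mathcal{F}_{a^*,M(a^*)+1}^{\bL_d}\cap\mathcal{F}_{a^*+1,M(a^*+1)+1}^{\bL_d}$, still with $f^k\notin B_{\epsilon}(f_0)$ and $f^k\to f$.

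It then remains to verify the elementary inclusion $\mathcal{F}_{a^*,M(a^*)+1}^{\bL_d}\cap\mathcal{F}_{a^*+1,M(a^*+1)+1}^{\bL_d}\subseteq\mathcal{F}_{a^*,M(a^*)+1,W(a^*)+1}^{\bL_d}$: for each $j$ with $l_j\in\{1,4,5,6,7,8,9\}$ the component $f_j^k$ is affine, convex, or concave on $[-a^*-1,a^*+1]$, so the monotonicity of its difference quotients bounds its Lipschitz constant on $[-a^*,a^*]$ by $\max\{|f_j^k(-a^*)|+|f_j^k(-a^*-1)|,\;|f_j^k(a^*)|+|f_j^k(a^*+1)|\}\le (M(a^*)+1)+(M(a^*+1)+1)=W(a^*)+1$, recalling $W(a^*)=M(a^*)+M(a^*+1)+1$. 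Consequently $f^k\in(\mathcal{F}_{a_0,M(a_0),W(a_0)}^{\bL_d}\cap\mathcal{F}_{a^*,M(a^*)+1,W(a^*)+1}^{\bL_d})\backslash B_{\epsilon}(f_0)$ for all large $k$, and letting $k\to\infty$ gives the asserted inclusion.

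The step I expect to be the main obstacle is the first one: re-running the proof of Lemma~\ref{Lem:Intersect} while simultaneously keeping the condition ``$\notin B_{\epsilon}(f_0)$'' attached to the approximating sequence and accommodating both orderings of $M(a_0)$ and $M(a_0+1)$, since $M$ is not assumed monotone in $a$. The subsequent refinements at $a^*$ and $a^*+1$ are routine, the one point to watch being that the boundedness of $(c^k)$ --- inherited from $f^k\in\mathcal{F}_{a_0,M(a_0)}^{\bL_d}$ --- forces the limits $f^k(\pm a^*\mathbf{e}_j)\to f(\pm a^*\mathbf{e}_j)$ to be finite, so that the differences defining $f_j^k(\pm a^*)$ are well behaved.
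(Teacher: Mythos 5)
Your proposal is correct and takes essentially the route the paper intends: the paper omits the argument as ``very similar indeed to the proof of Lemma~\ref{Lem:Intersect}'', and your proof is exactly that — re-running the Lemma~\ref{Lem:Intersect} argument for the pair $(a_0,a_0+1)$ with the restriction $\setminus B_{\epsilon}(f_0)$ carried along the approximating sequence, then upgrading that same sequence at the radii $a^*$ and $a^*+1$, where the ``$+1$'' slack in $M(a^*)+1$ and $W(a^*)+1$ absorbs the limiting step just as in the paper's statement. The only (cosmetic) difference is that you handle the $a^*$ and $a^*+1$ radii by a direct limit argument with the bounded intercepts rather than Lemma~\ref{Lem:Intersect}'s contradiction-and-subsequence device, and both your case analysis for the two orderings of $M(a_0)$, $M(a_0+1)$ and your chord-slope bound giving the Lipschitz constant $W(a^*)+1$ check out.
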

\begin{proof}
The proof is very similar indeed to the proof of Lemma~\ref{Lem:Intersect}, so we omit the details.
\end{proof}
\bigskip

Recall the definition of $\tilde{l}_n(f_0)$ from the proof of Theorem~\ref{Thm:SCMLEconsistency}.
\begin{lem}
\label{Lem:Pois}
Suppose that $Z$ has a Poisson distribution with mean $\mu \in (0,\infty)$. Then 
\begin{align*}
\mu \log \mu \le \mathbb{E}(Z \log Z) \le \mu \log \mu + 1.
\end{align*}
It follows that, under the Poisson setting, $\mathbb{E}\{\tilde{l}_n(f_0)\} \in [-1,0]$.
\end{lem}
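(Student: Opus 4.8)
The plan is to establish the two-sided bound on $\mathbb{E}(Z\log Z)$ first, and then deduce the statement about $\tilde{\ell}_n(f_0)$ by conditioning on $\bX$ and applying that bound pointwise.

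For the lower bound I would simply invoke Jensen's inequality: the map $x \mapsto x\log x$ (with the usual convention $0\log 0 = 0$) is convex on $[0,\infty)$, since its second derivative is $1/x > 0$ on $(0,\infty)$ and it is continuous at $0$. Hence $\mathbb{E}(Z\log Z) \ge \mathbb{E}(Z)\log\mathbb{E}(Z) = \mu\log\mu$. For the upper bound, the key step is the identity $\mathbb{E}(Z\log Z) = \mu\,\mathbb{E}\{\log(Z+1)\}$, obtained by writing $\mathbb{E}(Z\log Z) = \sum_{k\ge 1} k\log k\, e^{-\mu}\mu^k/k!$, cancelling the factor $k$ against $k!$, and re-indexing by $j = k-1$. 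Granting this, concavity of the logarithm and Jensen's inequality give $\mathbb{E}\{\log(Z+1)\} \le \log\bigl(\mathbb{E}(Z)+1\bigr) = \log(\mu+1)$, so that $\mathbb{E}(Z\log Z) \le \mu\log(\mu+1) = \mu\log\mu + \mu\log(1 + 1/\mu) \le \mu\log\mu + 1$, where the final step uses $\log(1+x) \le x$.

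For the last assertion, since $(\bX_i,Y_i)$ are i.i.d., $\mathbb{E}\{\tilde{\ell}_n(f_0)\} = \mathbb{E}\bigl[Y f_0(\bX) - B(f_0(\bX)) - \sup_{t\in\mathrm{dom}(B)}\{Yt - B(t)\}\bigr]$. Conditioning on $\bX$ and writing $\mu(\bX) = g^{-1}(f_0(\bX)) = e^{f_0(\bX)} \in (0,\infty)$ (finite since $f_0$ is real-valued), we have $Y\mid\bX \sim \mathrm{Pois}(\mu(\bX))$, together with $\mathbb{E}\bigl[Y f_0(\bX) - B(f_0(\bX)) \mid \bX\bigr] = \mu(\bX)\log\mu(\bX) - \mu(\bX)$ and, using the Poisson expression $\sup_t\{Yt - B(t)\} = Y\log Y - Y$, $\mathbb{E}\bigl[\sup_t\{Yt - B(t)\} \mid \bX\bigr] = \mathbb{E}[Y\log Y \mid \bX] - \mu(\bX)$. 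Subtracting, the conditional expectation of the integrand equals $\mu(\bX)\log\mu(\bX) - \mathbb{E}[Y\log Y \mid \bX]$, which lies in $[-1,0]$ by the first part of the lemma applied with $\mu = \mu(\bX)$. Taking expectations over $\bX$ then yields $\mathbb{E}\{\tilde{\ell}_n(f_0)\} \in [-1,0]$.

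I do not anticipate any genuine obstacle here: the only slightly non-routine point is the identity $\mathbb{E}(Z\log Z) = \mu\,\mathbb{E}\{\log(Z+1)\}$, and that is just a one-line re-indexing of the defining series for the Poisson expectation; everything else is Jensen's inequality and the elementary bound $\log(1+x)\le x$.
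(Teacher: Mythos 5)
Your argument is correct, and the lower bound (Jensen) and the final conditioning step are exactly as in the paper; the only place you diverge is the upper bound on $\mathbb{E}(Z\log Z)$. The paper standardises, writing $Z_0=(Z-\mu)/\sqrt{\mu}$ and expanding $Z\log Z=(\mu+\sqrt{\mu}Z_0)\{\log\mu+\log(1+Z_0/\sqrt{\mu})\}$ on $\{Z>0\}$, then applies $\log(1+z)\le z$ inside the expectation and uses only $\mathbb{E}(Z_0)=0$, $\mathbb{E}(Z_0^2)=1$ to land on $\mu\log\mu+1$; the argument thus needs nothing beyond the first two moments (and nonnegativity of $Z$), so it would extend to any nonnegative variable with variance at most its mean. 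You instead use the Poisson-specific size-bias identity $\mathbb{E}(Z\log Z)=\mu\,\mathbb{E}\{\log(Z+1)\}$ — your re-indexing of the series is valid, and termwise rearrangement is justified since all terms are of one sign for $k\ge 1$ except the single term $k=1$ which vanishes — followed by Jensen for the concave logarithm and $\log(1+x)\le x$. This route is Poisson-specific but slightly slicker and in fact yields the marginally sharper intermediate bound $\mu\log(\mu+1)\le\mu\log\mu+1$. Your verification that $\sup_{t}\{Yt-B(t)\}=Y\log Y-Y$ (with the convention $0\log 0=0$ when $Y=0$) and the conditional computation giving $\mu(\bX)\log\mu(\bX)-\mathbb{E}(Y\log Y\mid\bX)\in[-1,0]$ match the paper's final display, so the deduction that $\mathbb{E}\{\tilde{\ell}_n(f_0)\}\in[-1,0]$ stands as written.
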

\begin{proof}
The lower bound is immediate from Jensen's inequality.  For the upper bound, let $Z_0 = (Z - \mu)/\sqrt{\mu}$, so $\mathbb{E}(Z_0) = 0$ and $\mathbb{E}(Z_0^2) = 1$. It follows from the inequality $\log (1+z) \le z$ for any $z > -1$ that
\begin{align*}
\mathbb{E}(Z \log Z) & = \mathbb{E}\bigl[(\mu + \sqrt{\mu}Z_0)\{\log \mu + \log(1+ Z_0/\sqrt{\mu})\}\mathbbm{1}_{\{Z_0 > -\sqrt{\mu}\}}\bigr] \\
& \le \mathbb{E}\bigl[(\mu + \sqrt{\mu}Z_0)(\log \mu +  Z_0/\sqrt{\mu})\bigr]\\
& = \mu \log \mu + (\log \mu +1 ) \sqrt{\mu} \mathbb{E}(Z_0) + \mathbb{E}(Z_0^2) =  \mu \log \mu  + 1.
\end{align*}
Finally, we note that
\begin{align*}
\mathbb{E}\tilde{l}_n(f_0) &= \mathbb{E}\bigl[\mathbb{E}\bigl\{Yf_0(\bX) - B(f_0(\bX)) - Y \log Y + Y | \bX\bigr\} \bigr] \\
&= \mathbb{E}\bigl\{e^{f_0(\bX)}f_0(\bX) - \mathbb{E}(Y \log Y | \bX)\bigr\} \in [-1,0].
\end{align*}
\end{proof}
\bigskip

\begin{lem}
\label{Lem:Gamma}
In the Gamma setting, under assumption \textbf{(A.1)} and \textbf{(A.3)}, $\mathbb{E}\{\tilde{l}_n(f_0)\} \in (-\infty,0)$.
\end{lem}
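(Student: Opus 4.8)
The plan is to evaluate $\mathbb{E}\{\tilde{\ell}_n(f_0)\}$ more or less exactly. Recall from the proof of Theorem~\ref{Thm:SCMLEconsistency} that, in general,
\[
\tilde{\ell}_n(f_0) = \frac{1}{n}\sum_{i=1}^n\Bigl[Y_i f_0(\bX_i) - B(f_0(\bX_i)) - \sup_{t \in \mathrm{dom}(B)}\{Y_i t - B(t)\}\Bigr],
\]
and that in the Gamma case $B(\eta) = -\log(-\eta)$ on $\mathrm{dom}(B) = (-\infty,0)$, while $\sup_{t \in \mathrm{dom}(B)}\{Y_i t - B(t)\} = -1 - \log Y_i$ (the supremum of $t \mapsto Y_i t + \log(-t)$ over $t < 0$, attained at $t = -1/Y_i$, since $Y_i > 0$ almost surely under the Gamma model). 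Substituting, and noting that $f_0(\bx) \in \mathrm{dom}(B)$ so $-f_0(\bx) > 0$ for all $\bx$, this gives
\[
\tilde{\ell}_n(f_0) = \frac{1}{n}\sum_{i=1}^n\bigl[Y_i f_0(\bX_i) + \log(-f_0(\bX_i)) + 1 + \log Y_i\bigr].
\]

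The key observation is that each summand is a simple function of the single variable $V_i := -Y_i f_0(\bX_i) = Y_i/\mu_0(\bX_i)$, where $\mu_0(\bx) = g^{-1}(f_0(\bx)) = -1/f_0(\bx) > 0$ is the conditional mean of $Y$ given $\bX = \bx$. Indeed $\log(-f_0(\bX_i)) + \log Y_i = \log(-Y_i f_0(\bX_i)) = \log V_i$, so the $i$-th summand equals $1 - V_i + \log V_i$. I would then use the Gamma exponential-family parametrisation: conditional on $\bX_i = \bx$, the response $Y_i$ has a Gamma distribution with shape $1/\phi_0$ and scale $\mu_0(\bx)\phi_0$ (so that $\mathbb{E}(Y_i \mid \bX_i) = \mu_0(\bX_i)$, consistent with $B' = g^{-1}$), whence $V_i = Y_i/\mu_0(\bx)$ has the Gamma distribution with shape $1/\phi_0$ and scale $\phi_0$; in particular $V_i$ is independent of $\bX_i$ and its law does not involve $f_0$.

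Combining this with \textbf{(A.1)} yields $\mathbb{E}\{\tilde{\ell}_n(f_0)\} = \mathbb{E}(1 - V_1 + \log V_1)$. Since $\log v \le v - 1$ for every $v > 0$, with equality only at $v = 1$, the integrand is almost surely non-positive and strictly negative with positive probability (the Gamma law being non-degenerate), so the expectation is strictly negative. For finiteness, $\mathbb{E}(V_1) = (1/\phi_0)\cdot\phi_0 = 1 < \infty$ and $\mathbb{E}|\log V_1| < \infty$, the only delicate point being integrability near the origin, where $\int_0^1 |\log v|\,v^{1/\phi_0 - 1}\,dv < \infty$ because $1/\phi_0 > 0$; hence $\mathbb{E}(1 - V_1 + \log V_1) \in (-\infty,0)$, as required. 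There is no serious obstacle here; the one point needing a little care is the cancellation $\log(-f_0(\bX_i)) + \log Y_i = \log V_i$ — separately, $\mathbb{E}\log(-f_0(\bX))$ and $\mathbb{E}\log Y$ could each be $-\infty$ if $-f_0$ or $Y$ is sometimes close to $0$, but their sum is the logarithm of a fixed-parameter Gamma variable and is therefore integrable, which is precisely why $\mathbb{E}\{\tilde{\ell}_n(f_0)\}$ is finite.
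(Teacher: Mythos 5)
Your proof is correct and follows essentially the same route as the paper's: both rest on the observation that $-Y f_0(\bX) \mid \bX \sim \Gamma(1/\phi_0,1/\phi_0)$, a fixed Gamma law not depending on $\bX$, so that after the cancellation $\mathbb{E}\{Y f_0(\bX)\mid \bX\} = -1$ one has $\mathbb{E}\{\tilde{\ell}_n(f_0)\} = \mathbb{E}\log V$ with $V := -Y f_0(\bX)$. The only (immaterial) difference is the finishing step: the paper evaluates this exactly as $\log\phi_0 + \psi_D(1/\phi_0)$ and invokes $\psi_D(x) < \log x$, whereas you write the summand as $1 - V + \log V$ and use the elementary bound $\log v \le v-1$ together with a direct check that $\mathbb{E}|\log V| < \infty$, which is equally valid.
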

\begin{proof}
Since $-Yf_0(\bX) | \bX \sim \Gamma(1/\phi_0,1/\phi_0)$, we have
\begin{align*}
\mathbb{E}\tilde{l}_n(f_0) = \mathbb{E}\bigl[\mathbb{E}\bigl\{Yf_0(\bX) - B(f_0(\bX)) - \log Y + 1 | \bX \bigr\} \bigr] &= \mathbb{E}\bigl[\mathbb{E}\bigl\{\log (-Yf_0(\bX)) | \bX\bigr\}\bigr] \\
&= \log \phi_0 + \psi_D(1/\phi_0) \in (-\infty,0),
\end{align*}
where $\psi_D(\cdot)$ denotes the digamma function.
\end{proof}
\bigskip

\begin{lem}
\label{Lem:GAIMGC}
In the Gaussian setting, under \textbf{(A.1)}-\textbf{(A.2)} and  \textbf{(B.2)}-\textbf{(B.3)},
\begin{align*}
	\mathcal{G}^{\bL_m,d,\delta}_{a,M_1,M_2} = \Big\{ h_{f,\bA}:\mathbb{R}^d \times \mathbb{R} \rightarrow \mathbb{R} \; \Big| \; h_{f,\bA}(\bx,y) = &-\frac{1}{2}\bigl\{ f(\bA^T \bx) - y \bigr\}^2 \mathbbm{1}_{\{\bx \in [-a,a]^d\}} \\ &\mbox{ with } f \in \mathcal{F}^{\bL_m}_{a,M_1,M_2} \mbox{ and } \bA \in \mathcal{A}^{\bL_m}_{d,\delta} \Big\}
\end{align*}
is $P$-Glivenko--Cantelli.
\end{lem}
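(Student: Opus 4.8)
The plan is to follow the strategy of Step~4 in the proof of Theorem~\ref{Thm:SCMLEconsistency}, inserting one extra layer to handle the index matrix $\bA$ ranging over the compact set $\mathcal{A}^{\bL_m,\delta}_d$. First I would record two reductions. Since $\|\balpha_j\|_1 = 1$ for every $\bA = (\balpha_1,\ldots,\balpha_m) \in \mathcal{A}^{\bL_m,\delta}_d$, we have $\bA^T\bx \in [-a,a]^m$ whenever $\bx \in [-a,a]^d$; and on $[-a,a]^m$ the shape constraints together with $f_j(0)=0$ and $|f_j(\pm a)|\le M_1$ force $|f_j|\le M_1$ for every $j$, hence $|f|\le M_1(m+1)$. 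Thus $F(\bx,y) = \tfrac12\{|y| + M_1(m+1)\}^2\mathbbm{1}_{\{\bx\in[-a,a]^d\}}$ is an envelope for $\mathcal{G}^{\bL_m,d,\delta}_{a,M_1,M_2}$, and it is $P$-integrable because, under \textbf{(B.3)}, $\mathbb{E}(Y^2\mathbbm{1}_{\{\bX\in[-a,a]^d\}}) = \mathbb{E}[\{\phi_0 + f_0^I(\bX)^2\}\mathbbm{1}_{\{\bX\in[-a,a]^d\}}] < \infty$, the ridge functions of $f_0^I$ being real-valued monotone, convex or concave functions, hence bounded on compact intervals. Next, with $\psi(u,v,w) = -\tfrac12(u-v)^2 w$ continuous, Theorem~3 of \citet{vanderVaartWellner2000} applied to the classes $\mathcal{F}'$ below, $\{(\bx,y)\mapsto y\mathbbm{1}_{\{\bx\in[-a,a]^d\}}\}$ and $\{(\bx,y)\mapsto\mathbbm{1}_{\{\bx\in[-a,a]^d\}}\}$ (all with $P$-integrable envelopes) reduces the lemma, exactly as in Step~4, to showing that
\[
\mathcal{F}' = \Bigl\{\bx\mapsto f(\bA^T\bx)\mathbbm{1}_{\{\bx\in[-a,a]^d\}} : f\in\mathcal{F}^{\bL_m}_{a,M_1,M_2},\, \bA\in\mathcal{A}^{\bL_m,\delta}_d\Bigr\}
\]
is $P_{\bX}$-Glivenko--Cantelli.

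To prove this, decompose $f(\bA^T\bx) = c + \sum_{j=1}^m f_j(\balpha_j^T\bx)$; since finite sums of $P_{\bX}$-Glivenko--Cantelli classes with bounded envelopes are again so, and the constant term contributes a trivially Glivenko--Cantelli class, it suffices to bound, for each $j$, the $L_1(P_{\bX})$-bracketing numbers of $\mathcal{N}_j = \{\bx\mapsto f_j(\balpha_j^T\bx)\mathbbm{1}_{\{\bx\in[-a,a]^d\}}\}$, where $f_j$ ranges over the univariate shape class with label $l_j$ (bounded by $M_1$ on $[-a,a]$, and $M_2$-Lipschitz if $l_j\in\{1,4,5,6,7,8,9\}$) and $\balpha_j$ over the (compact) set of admissible $j$th columns. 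Here compactness enters: fix $\rho>0$ and choose a finite $(\rho/a)$-net $\balpha^{(1)},\ldots,\balpha^{(L)}$ of that column set in the $\ell_1$-metric, so that every admissible $\balpha_j$ has some $l$ with $|\balpha_j^T\bx - (\balpha^{(l)})^T\bx|\le\rho$ for all $\bx\in[-a,a]^d$. If $l_j\in\{1,4,5,6,7,8,9\}$, I would take the finite $L_1$-bracketing set for convex/concave functions on $[-a,a]$ from Corollary~2.7.10 of \citet{vanderVaartWellner1996}, relative to the law of $(\balpha^{(l)})^T\bX$, pull the brackets back through $\bx\mapsto(\balpha^{(l)})^T\bx$, and widen each by $M_2\rho$; the union over $l$ is then a finite bracketing set for $\mathcal{N}_j$ of $L_1(P_{\bX})$-width at most $\epsilon' + 2M_2\rho$, which can be made as small as desired.

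The main obstacle is the monotone case $l_j\in\{2,3\}$, where $f_j$ is merely bounded, so $f_j(\balpha_j^T\bx)$ need not be uniformly close to $f_j((\balpha^{(l)})^T\bx)$ and the naive widening fails. The remedy is to use brackets robust to $\rho$-shifts of the argument, exploiting monotonicity together with the fact that the projected measure $Q_l$, the law of $(\balpha^{(l)})^T\bX$, is absolutely continuous by Assumption~\textbf{(A.2)}. Concretely, for each $l$ I would choose a finite partition $-a-\rho = s_0 < s_1 < \cdots < s_N = a+\rho$ whose cells all have Lebesgue length at most $\rho$ and $Q_l$-measure at most $\epsilon_0$ (possible since $Q_l$ is atomless), and for an increasing $f_j$ with values rounded to the grid $\epsilon_0\mathbb{Z}\cap[-M_1,M_1]$ I would use the bracket that, on the cell containing $(\balpha^{(l)})^T\bx$, equals the rounded value of $f_j$ two cells to the left (lower) and two cells to the right (upper). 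Because a $\rho$-shift moves the argument by at most one cell, monotonicity forces $f_j(\balpha_j^T\bx)$ into this bracket for every $\balpha_j$ near $\balpha^{(l)}$; and its $L_1(P_{\bX})$-width is at most $(6M_1+2)\epsilon_0$, since each jump of $f_j$ is counted at most three times against cells of $Q_l$-mass at most $\epsilon_0$. Only finitely many such rounded monotone step functions arise for fixed $(\rho,\epsilon_0)$, so, taking the union over $l$ and then $\epsilon_0$ small, $\mathcal{N}_j$ has finite $L_1(P_{\bX})$-bracketing numbers. Combining the two cases shows $\mathcal{F}'$ is $P_{\bX}$-Glivenko--Cantelli by Theorem~2.4.1 of \citet{vanderVaartWellner1996}, and the lemma follows from the second reduction.
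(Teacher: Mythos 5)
Your overall strategy matches the paper's: reduce via the preservation theorem of \citet{vanderVaartWellner2000} and the additive decomposition to showing that each class $\{\bx \mapsto f_j(\balpha_j^T\bx)\mathbbm{1}_{\{\bx \in [-a,a]^d\}}\}$ is Glivenko--Cantelli, handle the index directions by a finite net of the $\ell_1$-sphere (compactness), treat the Lipschitz (convex/concave) components by widening pulled-back brackets, and treat the monotone components with shift-robust brackets exploiting the absolute continuity of the projected law. The difficulty is precisely where you locate it, but your construction for the monotone case has a genuine gap: you fix $\rho$ first, build cells of Lebesgue length \emph{at most} $\rho$ (and $Q_l$-mass at most $\epsilon_0$), and then claim that a perturbation of the argument by at most $\rho$ moves it by at most one cell. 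That implication requires cells of length at \emph{least} $\rho$; with cells shorter than $\rho$ (which your mass constraint will typically force when $Q_l$ is concentrated), a $\rho$-shift can cross arbitrarily many cell boundaries, so a monotone $f_j(\balpha_j^T\bx)$ need not lie inside the two-cell-margin bracket, and the coverage claim fails as written. Moreover you cannot in general arrange both ``length at least $\rho$'' and ``$Q_l$-mass at most $\epsilon_0$'' for a globally fixed $\rho$, since no bound on the density of $\balpha^T\bX$ is assumed.

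The fix is to reverse the order of the quantifiers: for each centre $\balpha_0$ on the $\ell_1$-sphere, first construct the brackets (or your cell partition with small $Q_l$-mass), and only then choose an admissible perturbation radius $\tau(\balpha_0)>0$ small enough that the brackets remain valid under argument shifts of size $\tau(\balpha_0)a$ (e.g.\ smaller than the minimal cell length, or, as in the paper, small enough that translating the brackets of Theorem~2.7.5 of \citet{vanderVaartWellner1996} by $\pm\tau a$ changes their $L_1(Q_{\balpha_0})$-size by at most $\epsilon/(8M_1)$, which uses the Lebesgue density of $\bX$); compactness of the sphere then yields a finite subcover with centre-dependent radii and hence a finite $\epsilon$-bracketing set. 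This centre-dependent choice of the shift size is exactly how the proof of Lemma~\ref{Lem:GAIMGC} in the paper proceeds, so your argument is repairable, but as stated the monotone-case bracketing step is incorrect. Your reduction steps, envelope bound, and treatment of the Lipschitz components are fine.
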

\begin{proof}
Following the argument in Step~4 of the proof of Theorem~\ref{Thm:SCMLEconsistency}, it suffices to show that
\begin{align*}
\big(\mathring{\mathcal{F}}_{a,M_1,M_2}^{\bL_d}\big)_j = \Big\{\mathring{f} : \mathbb{R}^d \rightarrow \mathbb{R}\Big| \mathring{f}(\bx) = f_j(\balpha_j^T \bx)\mathbbm{1}_{\{\bx \in [-a,a]^d\}} \text{ for some } f &\stackrel{\mathcal{F}^{\bL_d}}{\sim} (f_1,\ldots,f_m,c) \in \mathcal{F}_{a,M_1,M_2}^{\bL_m} \\ &\mbox{and } \balpha_j \in \mathbb{R}^d \mbox{ with } \|\balpha_j\|_1 = 1 \Big\}
\end{align*}
is $P$-Glivenko--Cantelli for every $j = 1,\ldots,m$. In the following, we present the proof in case $l_j = 2$. Other cases can be shown in a similar manner. 

By Theorem~2.7.5 of \citet{vanderVaartWellner1996}, there exists a universal constant $C > 0$ such that for any $\epsilon > 0$ and any $\balpha_0 \in \mathbb{R}^d$, there exist functions $g_k^L,g_k^U:\mathbb{R} \rightarrow [0,1]$ for $k=1,\ldots,N_3$ with $N_3 = e^{4M_1 C/\epsilon}$ such that $\mathbb{E}|g_k^U(\balpha_0^T \bX) - g_k^L(\balpha_0^T \bX)| \leq \epsilon/(4M_1)$ and such that for every monotone function $g:\mathbb{R} \rightarrow [0,1]$, we can find $k^* \in \{1,\ldots,N_3\}$ with $g_{k^*}^L \leq g \leq g_{k^*}^U$. Since $\bX$ has a Lebesgue density, for every $k$ we can find $\tau_k^L, \tau_k^U > 0$ such that 
\[
\mathbb{E}\bigl| g_k^L(\balpha_0^T \bX) - g_k^L(\balpha_0^T \bX - \tau_k^L)\bigr| \le \frac{\epsilon}{8M_1} \quad \mbox{ and } \quad \mathbb{E}\bigl| g_k^U(\balpha_0^T \bX + \tau_k^U) - g_k^L(\balpha_0^T \bX)\bigr| \le \frac{\epsilon}{8M_1}.
\]
By picking $\tau = \min\{\tau_1^L,\ldots,\tau_N^L, \tau_1^U,\ldots,\tau_N^U\}/a$ (which implicitly depends on $\balpha_0$), we claim that the class of functions 
\[
\tilde{g}_k^L(\bx) = 2M_1(g_k^L(\balpha_0^T \bx - \tau a)-1/2)\mathbbm{1}_{\{\bx \in [-a,a]^d\}}, \quad \tilde{g}_k^U(\bx) = 2M_1(g_k^U(\balpha_0^T \bx + \tau a)-1/2)\mathbbm{1}_{\{\bx \in [-a,a]^d\}}
\]
for $k=1,\ldots,N_3$, form an $\epsilon$-bracketing set in the $L_1(P_{\bX})$-norm for the set of functions
\begin{align*}
\notag \mathring{\mathcal{F}}_{a,M_1}^{\balpha_0,\tau} = \Big\{\mathring{f} : \mathbb{R}^d \rightarrow \mathbb{R}\Big| \mathring{f}(\bx) = f(\balpha^T \bx)&\mathbbm{1}_{\{\bx \in [-a,a]^d\}}, \text{ with } f: \mathbb{R} \rightarrow \mathbb{R} \mbox{ increasing, }\\ & \sup_{x \in \mathbb{R}} |f(x)| \le M_1 \mbox{ and } \|\balpha - \balpha_0\|_1 \le \tau \Big\}.
\end{align*}
To see this, we note that
\[
\sup_{\|\balpha-\balpha_0\|_1 \le \tau, \bx \in [-a,a]^d} |\balpha^T \bx - \balpha_0^T \bx | \le \tau a.
\]
It follows by monotonicity that for $k = 1, \ldots, N_3$, 
\begin{align*}
\mathbb{E} |\tilde{g}_k^U(\bX) - \tilde{g}_k^L(\bX)| &\le 2M_1 \mathbb{E} \bigl|g_k^U(\balpha_0^T \bX + \tau a) - g_k^U(\balpha_0^T \bX)\bigr|  
 + 2M_1 \mathbb{E}\bigl|g_k^U(\balpha_0^T \bX) - g_k^L(\balpha_0^T \bX)\bigr| \\ 
&\hspace{1cm} + 2M_1 \mathbb{E}\bigl| g_k^L(\balpha_0^T \bX) - g_k^L(\balpha_0^T \bX - \tau a) \bigr| \leq \frac{\epsilon}{4} + \frac{\epsilon}{2} + \frac{\epsilon}{4} = \epsilon. 
\end{align*}
Therefore, $\{\tilde{g}_k^L, \tilde{g}_k^U\}_{k=1}^{N_3}$ is indeed an $\epsilon$-bracketing set.

Now for every $\balpha_0 \in \mathbb{R}^d$ with $\|\balpha_0\|_1 = 1$, we can pick $\tau(\balpha_0) > 0$ such that a finite $\epsilon$-bracketing set can be found for $\mathring{\mathcal{F}}_{a,M_1}^{\balpha_0,\tau(\balpha_0)}$.  Since $\{\balpha_0 \in \mathbb{R}^d: \|\balpha_0\|_1 = 1\}$ is compact, we can pick $\balpha_0^1, \ldots, \balpha_0^{N^*}$ such that 
\[
\{\balpha_0 \in \mathbb{R}^d: \|\balpha_0\|_1 = 1\} \subseteq \bigcup_{k=1,\ldots,N^*} \bigl\{\balpha \in \mathbb{R}^d: \|\balpha - \balpha_0^k\|_1 \leq  \tau(\balpha_0^k) \bigr\}.
\]
Consequently, for every $\epsilon > 0$, a finite $\epsilon$-bracketing set can be found for $\big(\mathring{\mathcal{F}}_{a,M_1,M_2}^{\bL_d}\big)_j$. Finally, we complete the proof by applying Theorem~2.4.1 of \citet{vanderVaartWellner1996}.
\end{proof}
\newpage

\begin{table}[!htbp]\small
  \centering
  \begin{tabular}{|c|c|c|c|c|c|}
  \multicolumn{6}{c}{\normalsize Average running time per dataset (in seconds): Gaussian} \\ 
  \multicolumn{6}{c}{Problem 1} \\ \hline
  Method & $n=200$ & $n=500$ & $n=1000$ & $n=2000$ & $n=5000$ \\ \hline
  SCMLE   & 0.13 &  0.34 & 0.90 & 1.86 & 7.35  \\
  SCAM    & 0.91 &  1.72 & 4.17 & 7.43 &18.59  \\
  GAMIS   & 0.11 &  0.20 & 0.46 & 1.46 & 3.93  \\
  MARS    & 0.01 &  0.01 & 0.02 & 0.05 & 0.12  \\
  Tree    & 0.01 &  0.01 & 0.02 & 0.03 & 0.08  \\
  CAP     & 0.61 &  1.75 & 2.47 & 3.86 & 8.60  \\
  MCR     &30.17 &411.80 & - & - & -  \\\hline
  \multicolumn{6}{c}{Problem 2} \\ \hline
  Method & $n=200$ & $n=500$ & $n=1000$ & $n=2000$ & $n=5000$ \\ \hline
  SCMLE   & 0.07 &  0.18 & 0.37 & 0.88 & 3.03  \\
  SCAM    & 2.85 &  3.27 & 6.26 &12.22 &29.78  \\
  GAMIS   & 0.11 &  0.20 & 0.44 & 1.39 & 3.92  \\
  MARS    & 0.01 &  0.01 & 0.02 & 0.04 & 0.09  \\
  Tree    & 0.01 &  0.01 & 0.02 & 0.03 & 0.07  \\
  CAP     & 0.11 &  0.32 & 0.55 & 0.97 & 1.93  \\
  MCR     &33.31 &427.98 & - & - & -  \\\hline
  \multicolumn{6}{c}{Problem 3} \\ \hline
  Method & $n=200$ & $n=500$ & $n=1000$ & $n=2000$ & $n=5000$ \\ \hline
  SCMLE   & 0.35 &  0.95 & 2.37 & 5.41 & 20.21  \\
  SCAM    &23.08 & 25.77 &38.60 &70.67 &143.91  \\
  GAMIS   & 0.45 &  0.60 & 1.10 & 3.19 &  8.09  \\
  MARS    & 0.01 &  0.02 & 0.04 & 0.08 &  0.22  \\
  Tree    & 0.01 &  0.02 & 0.03 & 0.05 &  0.12  \\
  CAP     & 0.10 &  0.37 & 0.99 & 1.83 &  4.20  \\
  MCR     &26.61 &303.40 & - & - & -  \\\hline
  \end{tabular}
  \caption{\small Average running time (in seconds) of SCMLE, SCAM, GAMIS, MARS, Tree, CAP and MCR on problems 1, 2, 3 with sample sizes $n = 200, 500, 1000, 2000, 5000$ in the Gaussian setting.}
  \label{Tab:simtimeG}
\end{table}

\begin{table}[!p]\small
  \centering
  \begin{tabular}{|c|c|c|c|c|c|c|}
  \multicolumn{7}{c}{\normalsize Average running time per dataset (in seconds): Poisson and Binomial} \\ 
  \multicolumn{7}{c}{Problem 1} \\ \hline
  Model & Method & $n=200$ & $n=500$ & $n=1000$ & $n=2000$ & $n=5000$ \\ \hline
           & SCMLE   & 0.33 & 0.78 & 1.76 & 3.98 & 13.08  \\
  Poisson  & SCAM    & 1.24 & 2.40 & 4.92 & 9.99 & 30.54  \\
           & GAMIS   & 0.25 & 0.50 & 1.00 & 2.43 &  7.08  \\\hline
           & SCMLE   & 0.24 & 0.53 & 1.23 & 3.22 &  9.51  \\
  Binomial & SCAM    & 0.80 & 1.09 & 1.92 & 5.24 &  9.06  \\
           & GAMIS   & 0.25 & 0.47 & 0.93 & 2.49 &  6.66  \\\hline
  \multicolumn{7}{c}{Problem 2} \\ \hline
  Model & Method & $n=200$ & $n=500$ & $n=1000$ & $n=2000$ & $n=5000$ \\ \hline
           & SCMLE   & 0.20 & 0.41 & 0.84 &  1.80 &  5.10  \\
  Poisson  & SCAM    & 1.97 & 2.67 & 5.17 & 11.34 & 25.35  \\
           & GAMIS   & 0.24 & 0.42 & 0.94 &  2.43 &  6.62  \\\hline
           & SCMLE   & 0.16 & 0.35 & 0.72 & 1.49  &  4.63  \\
  Binomial & SCAM    & 1.82 & 3.06 & 6.38 & 9.60  & 25.87  \\
           & GAMIS   & 0.24 & 0.47 & 0.94 & 2.34  &  6.59  \\\hline
  \multicolumn{7}{c}{Problem 3} \\ \hline
  Model & Method & $n=200$ & $n=500$ & $n=1000$ & $n=2000$ & $n=5000$ \\ \hline
           & SCMLE   & 0.90 &  2.29 &  5.59 & 12.68 & 42.58  \\
  Poisson  & SCAM    & 8.85 & 16.93 & 22.77 & 39.69 & 77.08  \\
           & GAMIS   & 0.91 &  1.62 &  2.99 &  7.02 & 19.01  \\\hline
           & SCMLE   & 0.46 & 1.10 & 2.50 &  5.37 & 18.54  \\
  Binomial & SCAM    & 5.80 & 6.29 & 8.73 & 14.10 & 30.07  \\
           & GAMIS   & 1.18 & 1.53 & 2.83 &  6.93 & 16.41  \\\hline
    \end{tabular}
  \caption{\small Average running time (in seconds) of SCMLE, SCAM and GAMIS on problems 1, 2, 3 with sample sizes $n = 200, 500, 1000, 2000, 5000$ in the Poisson and Binomial settings.}
  \label{Tab:simtimePB}
\end{table}
\begin{table}[!p]\small
  \centering
  \begin{tabular}{|c|c|c|c|c|c|}
  \multicolumn{6}{c}{\normalsize Average running time per dataset (in seconds):} \\ 
  \multicolumn{6}{c}{\normalsize Additive Index Models} \\ 
  \multicolumn{6}{c}{Problem 4} \\ \hline
  Method & $n=200$ & $n=500$ & $n=1000$ & $n=2000$ & $n=5000$ \\ \hline
  SCAIE   & 4.36  &  6.61 &  12.20 & 23.50  & 69.52\\
  SSI     & 26.10 &112.44 & 411.16 &1855.37 &    - \\
  PPR	  & 0.01  &  0.01 &   0.01 &  0.02  & 0.05 \\
  MARS    & 0.01  &  0.03 &   0.05 &  0.10  & 0.25 \\
  Tree    & 0.01  &  0.01 &   0.01 &  0.03  & 0.03 \\
  CAP     & 0.48  &  1.24 &   1.90 &  3.02  & 6.69 \\
  MCR     & 38.21 &496.54 &      - &      - & -   \\\hline
  \multicolumn{6}{c}{Problem 5} \\ \hline
  Method & $n=200$ & $n=500$ & $n=1000$ & $n=2000$ & $n=5000$ \\ \hline
  SCAIE   &  3.78 & 8.76 & 20.32 & 62.68 & 203.20 \\
  PPR	  &  0.01 & 0.02 &  0.03 &  0.05 &   0.12 \\
  MARS    &  0.01 & 0.01 &  0.02 &  0.03 &   0.04 \\
  Tree    &  0.01 & 0.01 &  0.01 &  0.02 &   0.03 \\\hline
  \end{tabular}
  \caption{\small Average running times (in seconds) of different methods for the shape-constrained additive index models (Problems~4 and~5).}
  \label{Tab:simtimeindex}
\end{table}

\end{document}